\documentclass[10pt, A4paper]{amsart}
\usepackage{hyperref}

\usepackage{amssymb,amsmath,mathtools}
\usepackage{enumerate}
\usepackage{color}
\usepackage{mathrsfs}
\usepackage{enumitem}
\usepackage{graphicx}
\usepackage{float}
\usepackage{tikz}
\usepackage{mathtools}

\definecolor{colorHyperLinks}{HTML}{000000}

\hypersetup{
colorlinks,
allcolors=colorHyperLinks,
allbordercolors=.
}

\usepackage{setspace}

\usepackage[a4paper, left=31mm, right=31mm, top=30mm, bottom=25mm]{geometry}



\newtheorem{theorem}{Theorem}[section]
\newtheorem{proposition}[theorem]{Proposition}
\newtheorem{lemma}[theorem]{Lemma}
\newtheorem{corollary}[theorem]{Corollary}
\theoremstyle{definition}
\newtheorem{definition}[theorem]{Definition}
\newtheorem{example}[theorem]{Example}
\newtheorem{remark}[theorem]{Remark}

\newcommand{\indep}{\perp \!\!\! \perp}
\newcommand{\marriage}{\tikz[baseline=-0.25em, line width=.04em]{%
\draw (0,0) circle[radius=.20em];
\draw (0.20em,0) circle[radius=.20em];
}}

\DeclareFontFamily{U}{mathx}{}
\DeclareFontShape{U}{mathx}{m}{n}{ <-> mathx10 }{}
\DeclareSymbolFont{mathx}{U}{mathx}{m}{n}
\DeclareFontSubstitution{U}{mathx}{m}{n}

\DeclareMathAccent{\widecheck}{0}{mathx}{"71}

\makeatletter
\newcommand{\wcheck}[1]{\mathpalette\wcheck@{#1}}
\newcommand{\wcheck@}[2]{%
\begingroup
\edef\wcheck@font{\the
\ifx#1\displaystyle\textfont\else\ifx#1\textstyle\textfont
\else\ifx#1\scriptstyle\scriptfont\else\scriptscriptfont\fi\fi\fi\@ne
}%
\sbox\z@{\wcheck@font\mbox{#2}\mbox{\char\the\skewchar\font}}%
\sbox\tw@{\wcheck@font#2\char\the\skewchar\font}%
\dimen@=\dimexpr\wd\tw@-\wd\z@\relax
{\,\kern2\dimen@\widecheck{\!\kern-2\dimen@#2\!}\,}%
\endgroup
}
\makeatother

\newcommand{\comment}[1]{}
\newcommand{\F}{\mathcal F}
\newcommand{\G}{\mathcal G}
\renewcommand{\H}{\mathcal H}

\newcommand{\Hs}{\mathcal H}
\newcommand{\AW}{\mathcal A\mathcal W}
\newcommand{\Law}{\mathscr L}
\newcommand{\law}{\Law}

\newcommand{\ip}{\mathrm{ip}}
\newcommand{\id}{\mathrm{id}}

\newcommand{\pj}{\text{pj}}
\newcommand{\simhk}{\sim_{\mathrm{HK}}}

\newcommand{\cS}{\mathcal{S}}

\newcommand{\fp}[1]{{\mathbb #1}}

\newcommand{\sawu}[1]{{\widecheck{#1}}}
\newcommand{\fpsawu}[1]{{\widecheck{\mathbb #1}}}
\newcommand{\iuc}[1]{{#1^\sharp}}

\newcommand{\W}{\mathcal W}
\newcommand{\N}{\mathbb N}
\newcommand{\R}{\mathbb R}
\newcommand{\X}{\mathcal X}
\newcommand{\Y}{\mathcal Y}
\newcommand{\Z}{\mathcal Z}
\newcommand{\A}{\mathcal A}
\newcommand{\B}{\mathcal B}
\newcommand{\Pc}{\mathcal P}

\newcommand{\cpl}{\mathrm{CPL}}
\newcommand{\cplc}{\cpl_{\mathrm{c}}}
\newcommand{\cplbc}{\cpl_{\mathrm{bc}}}

\newcommand{\scpl}{\mathrm{cpl}}

\newcommand{\scplbc}{\scpl_{\mathrm{bc}}}

\newcommand{\fpcpl}{\mathrm{PrCpl}}

\def\P{{\mathbb P}}
\def\E{{\mathbb E}}

\def\PP{{\mathcal P}}
\def\Q{{\mathbb Q}}
\def\R{{\mathbb R}}

\newcommand{\FP}{\mathcal{FP}}
\newcommand{\FFP}{\mathrm{FP}}


\usepackage{relsize}
\def\fcmp{\mathbin{\raise 0.6ex\hbox{\oalign{\hfil$\scriptscriptstyle \mathrm{o}$\hfil\cr\hfil$\scriptscriptstyle\mathrm{9}$\hfil}}}}

\numberwithin{equation}{section}

\author{Mathias Beiglb\"ock, Susanne Pfl\"ugl, \and Stefan Schrott}
\thanks{The authors acknowledge FWF-support through projects Y782 and P35197. We also thank Gudmund Pammer for many helpful discussions and in particular the construction in Example \ref{ex:amalgamation}.}
\keywords{causal transport, adapted Wasserstein distance}
\date{\today}


\begin{document}
\title{A probabilistic view on the Adapted Wasserstein Distance}

\begin{abstract}
Causal optimal transport and adapted Wasserstein distance have applications in different fields from optimization to mathematical finance and machine learning. The goal of this article is to provide equivalent formulations of these concepts in classic probabilistic language. In particular, we prove a Skorokhod representation theorem for adapted weak convergence,  reformulate the equivalence of stochastic processes using Markovian lifts,  and  give an expression for the adapted Wasserstein distance based on representing processes on a common stochastic basis. 

\end{abstract}
\keywords{extended weak topologies, causal optimal transport, adapted Wasserstein distance, Skorokhod representation theorem}

\maketitle

\section{Introduction}%
Authors from several fields have independently introduced \emph{extended} weak topologies on the class of stochastic processes which refine the usual weak topology and take the temporal flow of information into account, see \cite{Al81, He96, HeSc02, BiTa19, PfPi12,PfPi14, NiSu20} among others. 
We will consider them 
mainly from the perspective of the \emph{adapted Wasserstein distance} which metrizes the adapted weak topology. Moreover, it allows for quantitative estimates (e.g.\ Lipschitz continuity of optimal control problems \cite{AcBaZa20} and pricing/hedging problems in mathematical finance \cite{Do14, GlPfPi17, BaBaBeEd19a}) and makes it possible to extend geometric concepts like McCann's interpolation and Wasserstein barycenters to stochastic processes, see \cite{BaBePa21}. 

The approaches mentioned above have in common that the original formulations appear somewhat technical and use concepts that are not widely used in stochastics. The goal of this paper is to provide reformulations in classical probabilistic terms.
Specifically, we reformulate equivalence of stochastic processes in the sense of Hoover--Keisler \cite{HoKe84} using Markov lifts, we establish a Skorokhod representation theorem for adapted weak convergence and a representation of the adapted Wasserstein distance by equivalent stochastic processes that are defined on a common stochastic basis. In order to make the treatise as accessible as possible, we restrict to the case of finite discrete time. 

\subsection{Equivalence of stochastic processes}

The main objects we consider are filtered processes. 
\begin{definition}\label{FPDef}
A five-tuple
\begin{align}\label{eq:FP} \fp{X}:= (\Omega, \F, \P, (\F_t)_{t=1}^N, (X_t)_{t=1}^N),\end{align} 
where $X=(X_t)_{t=1}^N$ is adapted to $(\F_t)_{t=1}^N$ and takes values\footnote{We restrict to $\R^d$ in the introduction for ease of notation. However, our results are equally true for abstract Polish spaces and in fact this setting will be technically more convenient for some of the later arguments.} in $\R^d$ 
is called a \emph{filtered process}. We  call $(\Omega, \F, \P, (\F_t)^N_{t=1})$ the \emph{stochastic basis} of $\fp X$.

The class of all filtered processes is denoted by $\FP$, the subclass of all filtered processes with finite $p$-th moment, i.e.\ $\E[ |X|^p]<\infty$ is denoted by $\FP_p$. 
\end{definition}
We emphasize the importance of using general filtrations rather than just the one generated by $X$ itself: On the one hand the filtration can encode information which is crucial to understand the nature of the process, this is argued e.g.\ by Aldous in \cite[Section 11]{Al81}. To give simple examples, when processes have the same law but different filtration, the values of optimal stopping or stochastic control problems may vary, their Doob-decomposition may be different, etc. On the other hand, it is also convenient for practical reasons: \emph{only} when allowing for general filtrations, the adapted Wasserstein distance turns out to be complete and geodesic. We come back to this point below.


\medskip

Allowing for arbitrary stochastic bases (as in Definition \ref{FPDef}) obviously generates redundancy. Roughly speaking, we want to identify $\fp{X}, \fp {Y}$ if they are `probabilistically equivalent'. 
 In particular, this should imply that they have the same laws but it should also entail properties that depend on the filtration, e.g.\ being Markov or the outcome of stochastic optimization problems. 
Based on considering processes together with their associated prediction processes, Aldous \cite{Al81} introduced the notion of synonymity of processes which is stronger than having the same law. However, synonymous processes may still exhibit different properties,  e.g.\ there are synonymous processes that yield different values in optimal stopping problems, see \cite[Chapter~6]{BaBePa21}. 
Hoover--Keisler \cite{HoKe84} gave a stricter notion of equivalence of processes based on an infinite iteration of the Aldous' prediction process construction and showed that this notion of equivalence ensures that the processes have the same probabilistic properties. As it is technically quite involved to make this precise, we refer to \cite{HoKe84} for the precise definition and give an equivalent characterization below. We will denote the Hoover--Keisler equivalence as $\fp X \simhk \fp Y$. Identifying $\simhk$-equivalent processes, yields the factor space 
\[
\FFP:= \FP /_{\simhk}.
\]

A contribution of the present article is to provide a more accessible formulation for the equivalence of processes in the sense of Hoover--Keisler based on Markovian lifts. 
A filtered process $\fp{X}$ is \emph{Markovian} if $\Law(X_{t+1} |\F_t)=\Law(X_{t+1} |X_t)$ for all $t < N$. 
Filtered processes can be lifted to Markov processes by adding a further `information coordinate' $\iuc{X}$, which `stores'  information inherent in the filtration of the process.

\begin{definition}
\label{def:Marlov-lift_Intro}
Let $\fp{X}=(\Omega, \F, \P, (\F_t)_{t=1}^N, (X_t)_{t=1}^N)$ be a filtered process and $\iuc{X}= (\iuc{X_t})_{t=1}^N$ adapted to $(\F_t)_{t=1}^N$. We write $\sawu{X}_t=(X_t,\iuc{X_t})$. If the filtered process defined by
$$
\fpsawu{X}:=(\Omega, \F, \P, (\F_t)_{t=1}^N, (\sawu X_t)_{t=1}^N)$$
is Markov, we call it a \emph{Markovian lift} of $\fp X$.
\end{definition}
Note that every filtered process admits a Markovian lift (see Lemma \ref{lem:ip.self.aware}). If a process $\fp{X}$ is Markovian, its filtration provides no extra information over observing its paths. Hence, all probabilistic properties of a filtered process are contained in the law of a Markov lift. More precisely, we have
\begin{theorem}
Two filtered processes $\fp X, \fp Y$ are equivalent in the sense of Hoover--Keisler if and only if they admit Markovian lifts $ \fpsawu{X}$, $ \fpsawu{Y}$ such that $\Law(\sawu X) = \Law(\sawu Y)$.
\end{theorem}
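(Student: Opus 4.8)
The plan is to establish the two implications separately, in each case reducing to the characterisation of Hoover--Keisler equivalence obtained further below, namely that $\fp U\simhk\fp V$ precisely when the two processes assign the same expectation to every \emph{adapted functional} -- the functionals built from bounded continuous functions of the coordinates by forming continuous combinations and by conditioning along the filtration -- or, equivalently, when they induce the same law on the tower of iterated prediction processes.

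\emph{Sufficiency.} Suppose $\fpsawu X$, $\fpsawu Y$ are Markovian lifts of $\fp X$, $\fp Y$ with $\Law(\sawu X)=\Law(\sawu Y)$. The key point is a collapsing property: if $\fpsawu X$ is Markov with respect to $(\F_t)_{t=1}^N$, then the one-step Markov property iterates to $\Law((\sawu X_{t+1},\dots,\sawu X_N)\mid\F_t)=\kappa_t(\sawu X_t)$ for a kernel $\kappa_t$ determined by $\Law(\sawu X)$ alone, while $(\sawu X_1,\dots,\sawu X_t)$ is itself $\F_t$-measurable. An induction over the construction of adapted functionals then shows that every adapted functional of $\fpsawu X$ is $\P$-almost surely a bounded measurable function of the path $(\sawu X_t)_{t=1}^N$, with this function depending only on $\Law(\sawu X)$: continuous combinations trivially preserve this property, and $\phi\mapsto\E[\phi\mid\F_t]$ preserves it because conditioning on $\F_t$ replaces the coordinates after time $t$ by integration against $\kappa_t(\sawu X_t)$. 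Hence $\Law(\sawu X)=\Law(\sawu Y)$ forces all adapted functionals of $\fpsawu X$ and $\fpsawu Y$ to have equal expectation, i.e.\ $\fpsawu X\simhk\fpsawu Y$. Finally, $\fp X$ and $\fpsawu X$ share the same stochastic basis and $X_t$ is the fixed coordinate projection of $\sawu X_t$ (and likewise for $Y$), so every adapted functional of $\fp X$ becomes, after composing with $\sawu X_t\mapsto X_t$, an adapted functional of $\fpsawu X$ with the same expectation; therefore $\fp X\simhk\fp Y$.

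\emph{Necessity.} Assume $\fp X\simhk\fp Y$. Here we use a canonical Markovian lift as provided by Lemma~\ref{lem:ip.self.aware}: we let $\iuc{X_t}$ encode the iterated prediction processes of $\fp X$ at time $t$, so that $\sawu X_t=(X_t,\iuc{X_t})$ is adapted to $(\F_t)_{t=1}^N$ and $\fpsawu X$ is a Markovian lift of $\fp X$, and we form $\fpsawu Y$ from $\fp Y$ in exactly the same way. By construction the information coordinate is assembled purely from the adapted structure of $\fp X$, so $\Law(\sawu X)$ is nothing but the joint law of $X$ together with all of its iterated prediction processes; since the characterisation of $\simhk$ identifies that joint law as a complete invariant of the Hoover--Keisler class, $\fp X\simhk\fp Y$ gives $\Law(\sawu X)=\Law(\sawu Y)$.

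\emph{Main obstacle.} The delicate step is the necessity direction: one has to verify that the Markovian lift used there genuinely depends only on the $\simhk$-class and not on the chosen representative, which amounts to identifying the law of the canonical lift of Lemma~\ref{lem:ip.self.aware} with the Hoover--Keisler invariant and to realising the tower of prediction processes as a bona fide random element of a Polish space. The sufficiency direction is more transparent; the only subtlety is that the filtration of a Markovian lift may be strictly larger than the one it generates, but the collapsing identity uses only the Markov property together with adaptedness, so this causes no trouble.
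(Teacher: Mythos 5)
Your proof is correct in outline, but it takes a genuinely different route from the paper. The paper reduces the statement to the equivalence $\fp X\simhk\fp Y\iff\AW_p(\fp X,\fp Y)=0$ (cited from earlier work) and to the isometry $\AW_p(\fp X,\fp Y)=\W_p(\Law(\ip(\fp X)),\Law(\ip(\fp Y)))$ of Theorem~\ref{thm:isometry}; the content then becomes showing that the law of the information process is determined by, and determines, the law of \emph{any} self-aware (hence any Markov) lift. The key technical lemma for one direction is Proposition~\ref{prop:sa}: the information process is a \emph{minimal} self-aware lift, i.e.\ an adapted factor of every other lift, and a backward induction then propagates the equality of lift-laws down to the information process. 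You instead work directly with Hoover--Keisler's original definition via adapted functionals. Your sufficiency step (the ``collapsing'' of adapted functionals of a Markov lift into path functionals determined by $\Law(\sawu X)$) is a correct and self-contained replacement for the paper's minimality argument; your necessity step coincides with the paper's choice of the information process as the canonical lift. What the paper's route buys is precision and reuse of machinery already set up for the metric theory; what your route buys is independence from $\AW_p$ and the isometry theorem, working purely at the level of the Hoover--Keisler hierarchy.

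Two small imprecisions worth flagging. First, you invoke Lemma~\ref{lem:ip.self.aware} as if it asserted that the information process is a \emph{Markov} lift; it only asserts self-awareness. The paper notes separately (before the last proposition of Section~3) that the information process is in fact Lipschitz Markov, because $\law(\ip_{t+1}(\fp X)\mid\F_t)=\ip_t^+(\fp X)$ is a deterministic projection of $\ip_t(\fp X)$; that is the observation you need, so cite or reprove that rather than Lemma~\ref{lem:ip.self.aware} alone. Second, your collapsing argument is phrased at the level of an informal ``hierarchy of adapted functionals''; to make it rigorous one has to fix the precise inductive class (as in Hoover--Keisler) and check closure under both continuous combination and conditioning at each rank, including the fact that the resulting path functional depends measurably and consistently on $\Law(\sawu X)$ through the regular conditional laws $\Law(\sawu X_{t+1:N}\mid\sawu X_{1:t})$. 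These are routine but nontrivial verifications, and the paper sidesteps them by working with laws of lifts rather than functionals. Neither issue is a real gap, but both should be spelled out if you want the argument to stand alone.
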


\subsection{Adapted Wasserstein distance}
Recognizing that solely analyzing the laws of stochastic processes is insufficient for deciding if they have the same probabilistic properties, it is natural that the associated weak convergence is not appropriate in many instances. For instance,  problems such as optimal stopping or pricing and hedging in mathematical finance and basic operations like the Doob decomposition are not continuous w.r.t.\ the usual weak topology / Wasserstein distance. 
These shortcomings have motivated the definition of `adapted' counterparts.  
Specifically, adapted versions of the Wasserstein distance have been independently defined in \cite{Ve70, Ve94, Ru85,Gi04, PfPi12, BiTa19,NiSu20}. Usually the definition is given using nested disintegrations or couplings that satisfy certain relations of conditional independence. 

In the present article, we give a characterization of adapted Wasserstein distance that is in the spirit of the probabilistic definition of the Wasserstein distance $\W_p$. Recall that
\begin{align}\label{eq:defW}
\W_p^p(\mu^0,\mu^1) = \inf \{\E_\P [|Y^0-Y^1|_p^p] \},
\end{align}
where $|\cdot|_p$ is the $p$-norm on $\R^d$ and the infimum is taken over all random variables $Y^0 \sim \mu^0$, $Y^1 \sim \mu^1$ that are defined on a common probability space $(\Omega,\F,\P)$. 

In order to state the respective assertion for the adapted Wasserstein distance, we still write  $|\cdot|_p$ to denote the $p$-norm on $\R^{dN}$. To avoid repeated case distinctions between weak convergence and $p$-Wasserstein convergence, we introduce the  convention $|\cdot|_0:= 
|\cdot|_1\wedge 1$.  

\begin{theorem}\label{thm:AWintro} Let $p\in \{0\}\cup [1,\infty)$.
The adapted Wasserstein distance between two filtered processes $\fp X^0, \fp X^1$ is given by\footnote{
Here and in the following, we adopt the convention that $p$-th and $1/p$-th powers are to be ignored in the case $p=0$.
}  
\begin{align}\label{eq:AW_intro}
\AW_p^p(\fp X^0, \fp X^1) =  \inf \left\{ \E_\P \left[  | Y^0 - Y^1|_p^p \right] \right\} \quad
\end{align}
where the infimum is taken over all stochastic processes $(Y^0_t)_{t=1}^N, (Y^1_t)_{t=1}^N $ that are defined on a common stochastic basis $(\Omega,\F,\P,(\F_t)_{t=1}^N)$ and satisfy $(\Omega,\F,\P,(\F_t)_{t=1}^N, Y^i) \simhk \fp X^i$ for $i \in \{0,1\}$. 

Moreover, the infimum in \eqref{eq:AW_intro} is attained and $(\Omega,\F)$ can be taken as standard Borel space.
\end{theorem}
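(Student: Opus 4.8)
The strategy is to pass through Markovian lifts and reduce the adapted Wasserstein distance to an ordinary Wasserstein distance on the space of laws of Markov lifts. First, I would invoke the characterization from the previous theorem: $\fp X^i$ is equivalent (in the Hoover--Keisler sense) to a filtered process $\fpsawu{X}^i$ which is a Markovian lift, and the Hoover--Keisler class of $\fp X^i$ is completely captured by the law $\Law(\sawu X^i)$ of the lifted process on path space (together with the fact that the lifted process is Markov and that the $X$-coordinate is a measurable image of $\sawu X^i$). The key point is that for Markov processes the adapted Wasserstein distance $\AW_p$ coincides with a bicausal optimal transport problem that, by the nested/dynamic-programming structure, can be solved coordinatewise in time, and an optimizer is itself a Markov coupling. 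Thus I expect $\AW_p^p(\fp X^0,\fp X^1)$ to equal the infimum of $\E[|Y^0-Y^1|_p^p]$ over \emph{Markov} couplings of $\Law(\sawu X^0)$ and $\Law(\sawu X^1)$, reading off $Y^i$ as the $\R^{dN}$-valued projection onto the original coordinates.

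The proof then splits into the two inequalities. For ``$\ge$'': given any pair of processes $(Y^0,Y^1)$ on a common stochastic basis $(\Omega,\F,\P,(\F_t))$ with $(\Omega,\F,\P,(\F_t),Y^i)\simhk \fp X^i$, I would form Markovian lifts $\sawu Y^i_t=(Y^i_t, \iuc{Y^i_t})$ \emph{on that same basis} (possible by Lemma \ref{lem:ip.self.aware}), note that $\Law(\sawu Y^i)=\Law(\sawu X^i)$ by the equivalence theorem, and observe that the joint law of $(\sawu Y^0,\sawu Y^1)$ is a bicausal coupling whose cost dominates $\AW_p^p$; here the information coordinates only help — a bicausal coupling of the lifts projects to an admissible object for the original $\AW_p$ problem, and the cost $\E[|Y^0-Y^1|_p^p]$ is unchanged because it only sees the original coordinates. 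Hence the infimum on the right of \eqref{eq:AW_intro} is at least $\AW_p^p(\fp X^0,\fp X^1)$. Wait --- more carefully: what we need is that the right-hand infimum is $\ge \AW_p$, which follows because every competitor there induces a bicausal coupling of the canonical representatives realizing $\AW_p$. For ``$\le$'': take an optimal bicausal coupling $\pi$ realizing $\AW_p^p(\fp X^0,\fp X^1)$ (existence of optimizers for bicausal transport in finite discrete time is standard and will be quoted); by the Markovian-lift picture $\pi$ may be taken to be a coupling of $\Law(\sawu X^0)$ and $\Law(\sawu X^1)$ that is itself Markov. Then $\Omega:=\R^{dN}\times (\text{info space})^2$ with the canonical filtration, $\P:=\pi$, and $Y^i:=$ projection to the $i$-th block of original coordinates gives processes on a common standard Borel stochastic basis with $(\Omega,\F,\P,(\F_t),Y^i)\simhk \fp X^i$ (because the lift is Markov and has the right law, so by the equivalence theorem it is Hoover--Keisler equivalent to $\fp X^i$) and with $\E_\P[|Y^0-Y^1|_p^p]=\AW_p^p$. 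This simultaneously yields attainment and the standard Borel claim.

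\textbf{Main obstacle.} The delicate step is matching the ``bicausal coupling'' notion used in the definition of $\AW_p$ with ``common stochastic basis carrying both processes'', and checking that one may always arrange the optimizing coupling to be \emph{Markov} in the lifted coordinates so that the projected processes are genuinely Hoover--Keisler equivalent (and not merely equal in law) to the given $\fp X^i$. Concretely, one must verify: (i) a bicausal coupling of two Markovian filtered processes admits an optimizer that is again Markovian, which relies on the dynamic-programming/backward-induction description of bicausal transport together with the fact that at each time step the one-step cost-to-go depends only on the current states; and (ii) that restricting attention to the lift of a Hoover--Keisler class is lossless, i.e.\ that $\AW_p$ is genuinely a functional on $\FFP$ and is computed by bicausal couplings of the lifts — this is where the previous theorem and the definition of $\AW_p$ via nested distances must be reconciled. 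I would isolate (i) as a lemma. The remaining bookkeeping — choosing the canonical space, the canonical filtration, and checking adaptedness and measurability — is routine and I would not belabor it.
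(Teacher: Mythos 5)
Your ``$\ge$'' direction is sound, though slightly roundabout compared to what the paper does (any process coupling $\overline{\fp X}\marriage\overline{\fp Y}$ on a common basis trivially induces the bicausal diagonal coupling $(\id,\id)_\ast\P$, and one just reads off $\AW_p^p(\fp X,\fp Y)=\AW_p^p(\overline{\fp X},\overline{\fp Y})\le\E[d(\overline X,\overline Y)^p]$; no lifting on both sides is needed for this inequality). The genuine issue is in your ``$\le$'' direction, where you propose to show that the optimal bicausal coupling between the laws of Markovian lifts ``may be taken to be itself Markov'' and deduce the $\simhk$-equivalence from Markovianity of the joint coupling. This is not the right invariant. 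What you actually need is that each coordinate process $\sawu X^i$, viewed on the product space $\Omega^{\fp X^0}\times\Omega^{\fp X^1}$ under $\pi$ with the joint canonical filtration, is a Markov (equivalently: self-aware) lift of $Y^i$ --- that is, Markov \emph{with respect to the joint filtration}, not that the pair $(\sawu X^0,\sawu X^1)$ is a Markov process. The latter is neither necessary nor sufficient: one can couple two Markov processes so that the joint process is Markov yet the coupling fails bicausality (e.g.\ $N=2$, $Y_1=X_2$, $Y_2=X_1$ with $(X_1,X_2)$ i.i.d.\ Bernoulli), and in that case the coordinate processes on the product space would no longer be $\simhk$-equivalent to the originals. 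Conversely, bicausality of $\pi$ alone already yields the property you need: causality of $\pi$ from $\fp X^i$ to $\fp X^{1-i}$ gives $\Law_\pi(\sawu X^i_{t+1}\mid\F_t^{\mathrm{joint}})=\Law_\pi(\sawu X^i_{t+1}\mid\sawu X^i_{1:t})=\Law_\pi(\sawu X^i_{t+1}\mid\sawu X^i_t)$, so $\sawu X^i$ is Markov w.r.t.\ the joint filtration.

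Concretely, your ``main obstacle (i)'' (showing the optimal bicausal coupling between Markov marginals has a Markov optimizer via dynamic programming) is a detour: you already took $\pi$ bicausal and bicausality is all that is used. The paper isolates exactly this point as Lemma~\ref{lem:cpltoProcCpl}: \emph{every} bicausal coupling, not merely an optimal or Markov one, lifts to a process coupling on the product space, via the conditional-independence characterization of causality (Lemma~\ref{lem:causal_eqiv}). Once you replace ``Markovianity of the joint coupling'' by ``bicausality of $\pi$'' as the key ingredient and drop the dynamic-programming step, your $\le$ direction closes, the product space is standard Borel, and attainment follows from attainment of the optimal bicausal coupling on the canonical representatives (cf.\ Theorem~\ref{thm:isometry} and the remark thereafter).
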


   
   We have $\AW_p(\fp{X},\fp{Y})=0$ if and only if $\fp X$ and $\fp Y$ are equivalent in the sense of Hoover--Keisler (see Theorem~\ref{thm:equivalence}). For each $p\in \{0\}\cup [1,\infty)$, $\AW_p$ is a complete separable metric on $\FFP_p$. The topology induced by $\AW_0$ is called the  \emph{adapted weak topology}.
    As in the case of the classical Wasserstein distance, convergence w.r.t.\ $\AW_p$ is equivalent to adapted weak convergence together with the convergence of $p$-th moments, see \cite{BaBePa21}. The adapted weak topology was originally introduced by Hoover-Keisler \cite{HoKe84, Ho91} as the topology induced by weak convergence of iterated prediction processes.



Theorem \ref{thm:AWintro} asserts that 
for the appropriate choice of the stochastic base $(\Omega, \F, \P, (\F_t)_{t=1}^N)$ and random variables $Y^0, Y^1$ on it,  we have 
\begin{align}\label{eq:AWasLP}
\AW_p(\fp X^0, \fp X^1)=\|Y^0-Y^1\|_{L^p ( \P)}.
\end{align}
Importantly, this representation of the adapted Wasserstein distance yields that $(\FFP_p,\AW_p)$ is a geodesic space for all $p \in [1,\infty)$ and provides the following representation of geodesics: A geodesic between the filtered processes $\fp X^0$ and $\fp X^1$ is given by
$$
\fp Y^\lambda :=(\Omega,\F,\P,(\F_t)_{t=1}^N, (1-\lambda) Y^0 + \lambda Y^1), \quad \lambda \in [0,1].
$$


To provide additional context we recall some further properties of $\AW_p$:
\begin{itemize}
\item the set of martingales is closed and geodesically convex for $p \ge 1$ (\cite[Theorem~1.4]{BaBePa21})
\item there is a Prohorov-type compactness criterion (\cite[Theorem~1.7]{BaBePa21}) 
\item finite state Markov chains are dense (\cite[Theorem~5.4]{BaBePa21})
\item The following are (Lipschitz-) continuous w.r.t.\ to $\AW_p$: Doob-decomposition, optimal stopping, snell-envelope, pricing and hedging, utility maximization, see \cite{BaBaBeEd19a, BaBePa21}.
\end{itemize}


In general, it is not possible to represent more than two processes on the same stochastic basis in such a way that \eqref{eq:AWasLP} is satisfied for all pairs of processes -- this is already impossible in the special case $N=1$ where $\AW$ is the classical Wasserstein distance, see \cite[Example~7.3.3]{AmGiSa08}.

However, the proof of Theorem~\ref{thm:AWintro} yields the following:  Given $\fp X, \fp X_1, \fp X_2,\ldots  \in \FP_p$, $p\in \{0\}\cup [1,\infty)$  there are $Y, Y_1, Y_2,\ldots $ on a common stochastic basis $ (\Omega,\F,\P,(\F_t)_{t=1}^N)$
 such that $\fp X \simhk \fp Y,$ $\fp X_n \simhk \fp Y_n, n\geq 1$ and
 \begin{align}\label{eq:Skorohod_Isometry} 
     \AW_p( {\fp X}, {\fp  X}^n) 
     = \|Y-Y^n\|_{L^p(\lambda^N)}, \quad n\geq 1.
 \end{align}
As a consequence we have the following Skorohod-type characterization of $\AW_p$-convergence:
\begin{corollary}
    For $\fp X, \fp X^1, \fp X^2, \ldots \in \FP_p$ the following are equivalent:
	\begin{enumerate}[label=\emph{(\alph*)}]
		\item $\AW_p(\fp X, \fp X^n) \to 0$.
		\item There exist filtered processes $ {\fp Y}\simhk\fp X$, $ {\fp  Y}^n\simhk {\fp  X}^n, n\geq 1$ on a common stochastic basis such that $\| Y-Y^n\|_{L^p(\lambda^N)}\to 0$. 
	\end{enumerate}
\end{corollary}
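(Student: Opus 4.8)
The plan is to deduce the corollary directly from the displayed isometry \eqref{eq:Skorohod_Isometry}, which the paper explicitly promises will come out of the proof of Theorem~\ref{thm:AWintro}. So the argument is essentially a packaging step: once we have a \emph{single} common stochastic basis $(\Omega,\F,\P,(\F_t)_{t=1}^N)$ carrying representatives $Y$ of $\fp X$ and $Y^n$ of $\fp X^n$ simultaneously for all $n\geq 1$ with $\AW_p(\fp X,\fp X^n)=\|Y-Y^n\|_{L^p(\lambda^N)}$, both implications become immediate.

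First I would prove $(a)\Rightarrow(b)$. Assume $\AW_p(\fp X,\fp X^n)\to 0$. Invoke the construction underlying Theorem~\ref{thm:AWintro} (stated in the excerpt just before the corollary): there exist $Y,Y^1,Y^2,\ldots$ on a common stochastic basis with $\fp X\simhk\fp Y$, $\fp X^n\simhk\fp Y^n$ and the isometry \eqref{eq:Skorohod_Isometry}. Setting $\fp Y:=(\Omega,\F,\P,(\F_t)_{t=1}^N,Y)$ and $\fp Y^n:=(\Omega,\F,\P,(\F_t)_{t=1}^N,Y^n)$ we get $\|Y-Y^n\|_{L^p(\lambda^N)}=\AW_p(\fp X,\fp X^n)\to 0$, which is exactly $(b)$.

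For $(b)\Rightarrow(a)$, suppose $\fp Y\simhk\fp X$ and $\fp Y^n\simhk\fp X^n$ live on a common stochastic basis with $\|Y-Y^n\|_{L^p(\lambda^N)}\to 0$. Since $\AW_p$ is well-defined on the quotient $\FFP_p$ (i.e.\ it only depends on the $\simhk$-classes, as recorded after Theorem~\ref{thm:AWintro}), we have $\AW_p(\fp X,\fp X^n)=\AW_p(\fp Y,\fp Y^n)$. Now $\fp Y$ and $\fp Y^n$ share a stochastic basis and $(Y,Y^n)$ is an admissible pair in the infimum defining $\AW_p$ in \eqref{eq:AW_intro} (each $Y^i$ is $\simhk$-equivalent to itself), so $\AW_p(\fp Y,\fp Y^n)\leq \mathbb E_\P[|Y-Y^n|_p^p]^{1/p}=\|Y-Y^n\|_{L^p(\lambda^N)}\to 0$ (with the usual $p=0$ convention that the power is dropped). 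Here one should note the stochastic basis may be an abstract Polish space rather than $\lambda^N$; but by Theorem~\ref{thm:AWintro} the infimum is unchanged if we restrict to standard Borel bases, and $\|Y-Y^n\|_{L^p(\P)}$ is what actually appears --- I would simply write the estimate in terms of $\mathbb E_\P$ and only appeal to $\lambda^N$ when matching the notation of \eqref{eq:Skorohod_Isometry} in part $(a)$.

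The only real content is the simultaneous-representation statement feeding \eqref{eq:Skorohod_Isometry}, and that is asserted to follow from the proof of Theorem~\ref{thm:AWintro}; so the main obstacle is not in this corollary at all but in checking that the construction there genuinely produces \emph{countably many} $Y^n$ on one fixed basis (as opposed to a fresh basis for each $n$), and that the chosen representative $\fp Y$ of $\fp X$ can be taken the same across all $n$. Assuming that, as the excerpt does, the corollary is a one-paragraph corollary and I would present it as such.
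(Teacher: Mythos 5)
Your proof takes essentially the same route as the paper: the implication $(a)\Rightarrow(b)$ follows from the simultaneous‑representation statement \eqref{eq:Skorohod_Isometry}, and $(b)\Rightarrow(a)$ follows because $\AW_p$ depends only on the $\simhk$-class and the pair $(Y,Y^n)$ is admissible in \eqref{eq:AW_intro}. In the body of the paper this is Theorem~\ref{thm:lpskorokhod}, whose proof realizes the countably-many-$Y^n$-on-one-basis construction by fixing a representative $\overline{\fp X}\simhk\fp X$ that admits independent randomization (Proposition~\ref{prop:eqivir}) and then applying the transfer principle via \eqref{eq:sub} once per $n$; the reverse implication is obtained from Theorem~\ref{thm:AWreform}. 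So the only thing your write-up leaves as a pointer (that the fixed basis really does carry all the $Y^n$ at once, rather than one per $n$) is exactly what the paper discharges by fixing $\overline{\fp X}$ first and transferring each $\fp X^n$ onto its basis.
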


Classically, the Skorokhod representation theorem is formulated for almost sure convergence and this also holds in our context. Indeed we construct for each $\fp X\in \FP$ a $\simhk$-representative 
$$
\underline {\fp X} = ([0,1]^N, \B_N,  \lambda^N,  (\B_t)_{t=1}^N, (\underline X_t)_{t=1}^N), \quad \text{where } \B_t= \sigma ((x_1, \ldots, x_N) \mapsto (x_1, \ldots, x_t)) 
$$ 
such that following are equivalent for $\fp X, \fp X_1, \fp X_2, \ldots \in \FFP$: 
\begin{enumerate}[label=\emph{(\alph*)}]
    \item $\fp X^n\to \fp X$ in the weak adapted topology. 
    \item $\underline X^n \to \underline X$ in $L^0(\lambda^N)$.
        \item $\underline X^n \to \underline X$,  $\lambda^N$-a.s.
\end{enumerate}
That is, via the mapping $\fp X \mapsto \underline X$, $\FFP$ corresponds to a closed subset of $L_0([0,1]^N)$. For $p\geq 1$, $\FFP_p$ corresponds then to the subset of those $\underline X$ which are also $p$-integrable.

\subsection{Organization of the paper}
\underline{Section 2} reviews some related literature,  provides the necessary background on adapted Wasserstein distances and recalls known results on the space of filtered processes which we will require subsequently. 
In \underline{Section 3} we discuss Markovian lifts and the connection to the equivalence of filtered processes as well as to adapted weak convergence. In probability one often extends the base space to include further needed objects such as independent uniform random variables. 
In \underline{Section 4} we provide the appropriate notion of extension for filtered probability spaces. 
This is used in \underline{Section 5} to provide a \emph{transfer principle} for filtered processes.
The classical transfer principle  (see e.g.\ \cite[Theorem 5.10]{Ka97}) asserts that for  random variables $X,Y$ on a probability space $(\Omega,\F,\P)$ and a random variable $\widetilde{X} \sim X$ on a further (sufficiently large) space $(\widetilde{\Omega},\widetilde{\F},\widetilde{\P})$, there is a random variable $\widetilde{Y}$ on this space such that $(X,Y) \sim (\widetilde{X},\widetilde{Y})$. The analogous statement holds for filtered processes, see Theorem \ref{thm:transfer}. An important consequence is that optimization problems are equivalent for processes which are $\simhk$-equivalent. 
The transfer principle is a main ingredient in \underline{Section 6} where we establish the characterization of adapted Wasserstein distance mentioned in the introduction and provide a representation of couplings in terms of (bi-) adapted processes. We briefly explain how this representation  leads to simplified proofs of stability results w.r.t.\ adapted Wasserstein distance. 
In \underline{Section 7} we establish $L^p$- and a.s.-versions of the Skorokhod representation theorem based the transfer principle and results from \cite{Bo07}.
In the \underline{\smash{appendix}} we consider an amalgamation result of Hoover \cite{Ho92} which would imply our transfer principle. However this amalgamation theorem is flawed as witnessed by Example \ref{ex:amalgamation}.

\section{Background}
We start with a review of the related literature in Subsection~\ref{sec:literature}. Afterwards, we introduce notation in Subsection~\ref{sec:notation} and summarize necessary background from \cite{BaBaBeEd19b} and \cite{BaBePa21} in the Subsections~\ref{sec:recap.canon} and \ref{sec:recap.FP}.

\subsection{Remarks on related literature.} 
\label{sec:literature}
As noted above, topologies (or distances) that account for the flow of information were introduced by different groups of authors. This includes Aldous in stochastic analysis \cite{Al81}, 
Hoover--Keisler in mathematical logic \cite{HoKe84, Ho91}, Hellwig in economics \cite{He96}, Pflug--Pichler \cite{PfPi12, PfPi14} in optimization, Bion-Nadal--Talay and Lasalle \cite{BiTa19, La18} in stochastic control / stochastic analysis, Nielsen--Sun \cite{NiSu20} in machine learning and, using higher rank signatures, Bonner, Liu, and Oberhauser \cite{BoLiOb23}. As processes $(X_t)_{t=1}^N$ with their natural filtration $\F_t= \sigma( (X_k)_{k=1}^t), t\leq N$ correspond to laws in $\mathcal P((\R^d)^N)$ all of these approaches provide topologies on $\mathcal P((\R^d)^N))$. It is easy to see that these are strict refinements of the usual weak topology. More surprisingly, as established in \cite{BaBaBeEd19b, Pa22}, all of them provide \emph{the same} topology on $\mathcal P((\R^d)^N)$ which motivates to refer to it as \emph{the adapted weak topology}.
In contrast, 
the choice of an adapted topology / distance in continuous time setups appears to be more specific to the particular problem at hand, see e.g.\ \cite{BaBaBeEd19a, AcBaZa20, BiTa19, Fo22a, Fo22b} as well as the discussion in \cite[Appendix C]{BaBePa19}.

\medskip

Adapted topologies and adapted transport theory have been applied to geometric inequalities \cite{BoKoMe05, BaBeLiZa17, La18, Fo22a}, stochastic optimization and multistage programming \cite{Pf09, PfPi12, KiPfPi20, BaWi23}, they are a useful tool for different problems in mathematical finance, see e.g.\ \cite{Do14} (pricing of game options), \cite{AcBaZa20} (questions of insider trading and enlargement of filtrations), \cite{GlPfPi17, BaDoDo20, BaBaBeEd19a, BeJoMaPa21a} (stability of pricing / hedging and utility maximization) and \cite{AcBePa20} (interest rate uncertainty) and appear in machine learning, see e.g.\ \cite{AkGaKi23} (causal graph learning), \cite{XuLiMuAc20, AcKrPa23} (video prediction and generation), and \cite{XuAc21} (universal approximation). We refer to \cite{PfPi16, BaBaBeWi20, AcHo22} for results concerning the estimation of $\mathcal{AW}_p$ from statistical data and to \cite{PiWe21, EcPa22, BaHa23} for efficient numerical methods for adapted transport problems and adapted Wasserstein distances.

\medskip

Most closely related to the present article, Hoover \cite{Ho91} provides a Skorokhod-representation theorem for the weak adapted topology. Hoover works in a somewhat different setup, specifically he considers stochastic bases equipped with a  continuous time filtration which carries a single random variable rather than an adapted processes. To accustom for the  continuous time framework, it is necessary to allow the representing random variables in the Skorokhod result to have different filtrations.  The representation theorem in continuous time relies on the flawed amalgamation theorem (cf.\ Example \ref{ex:amalgamation}). Importantly, for discrete time filtrations, Hoover requires neither the consideration of different filtrations nor the use of the amalgamation  theorem and establishes \cite[Theorem~10.1]{Ho91} which is closely related to our pointwise Skorokhod representation result in Theorem~\ref{thm:asskorokhodmap}. Specifically, Hoover is concerned with filtered random variables rather than processes and his representation is specific to sequences. 



Also related to the present work is \cite{BePaPo21} where it is shown that on $\mathcal{P}((\R^d)^N)$, the weak adapted topology is metrized by the Knothe--Rosenblatt distance induced by multivariate extensions of quantile functions, so called quantile processes. As a consequence, these quantile processes provide an explicit Skorokhod representation theorem on $\mathcal{P}((\R^d)^N)$.

\subsection{Notation}
\label{sec:notation}
In all of what is to come, we fix the number of time steps $N\in\N$ and $p \in [1,\infty)$. We will omit the convention for $p=0$ further on, and work with a bounded/truncated metric instead. We write $\mathcal{P}(\X)$ for the Borel probability measures on a Polish space $\X$ and given a compatible metric $d$ on $\X$ we write $\mathcal{P}_p(\X)$ for the subset of probabilities with finite $p$-th moment. 
The Lebesgue measure on $[0,1]^N$ is denoted by $\lambda^N$.


In contrast to the introduction we do not restrict to processes filtered process with values in $\R^d$, but we allow $X_t$ to take values in the general Polish space $\X_t$. In this case we write  $\mathcal X:=\prod_{t=1}^N\X_t$ and if $d_t$ are metrics for $\X_t$, we equipp $\X$ with the metric $ d(x,y)^p=\sum_{t=1}^Nd_t(x_t,y_t)^p$.

If $\fp X$ is a filtered process, we generally refer to the elements of the 5-tuple as 
\begin{align*}
\fp{X}= (\Omega^\fp{X}, \F^\fp{X}, \P^\fp{X}, (\F^\fp{X}_t)_{t=1}^N, (X_t)_{t=1}^N).
\end{align*}
We use the convention $\F^{\fp X}_0:=\{\emptyset,\Omega^{\fp X} \}$. Given two filtered processes $\fp X, \fp Y$, we  denote the product $\sigma$-algebra $\F_s^\fp{X}\otimes\F_t^\fp{Y}$ by $\F_{s,t}^{\fp{X},\fp{Y}}$for $s,t \in \{1,\dots, N\}$ and identify $\mathcal{F}^{\fp X,\fp Y}_{0,t}$ with $\F^{\fp Y}_t$. 

If $X$ is a random variable, we write $\sigma(X)$ for the $\sigma$-algebra generated by $X$. By slight abuse of notation, we also write $\sigma(X)$ for the filtration generated by a stochastic process $X=(X_t)_{t=1}^N$, i.e.\ $\sigma(X)_t = \sigma(X_t)$. 

For $1\le r\le s \le t \le N$,  we introduce the following short-hand notations: Given $x_s \in \X_s$, we write $x_{r:t} = (x_r,\dots, x_t)$ and given $M_s \subseteq \X_s$, we write $M_{r:t} = M_r \times \dots \times M_t$. Moreover, $\pj_s : \X_{r:t} \to \X_s$ denotes the projection.

\subsection{The adapted Wasserstein distance on $\mathcal{P}_p(\X)$}\label{sec:recap.canon}

 Given a compatible complete metric $d$ on the Polish space $\X$, the Wasserstein distance between $\mu,\nu\in\Pc_p(\mathcal{\X})$ is given by
\[	
\W_p(\mu,\nu):= \inf_{\pi \in \scpl(\mu,\nu)} \left\{\int d(x,y)^p \, \pi(dx,dy)\right\}^\frac{1}{p},
\]
where $\scpl(\mu,\nu)$ is the set of couplings between $\mu$ and $\nu$, i.e.\ the set of all $\pi \in \mathcal{P}(\X \times \X)$ with first marginal $\mu$ and second marginal $\nu$. The Wasserstein distance $\W_p$ is a complete metric on probability measures that turns them into a Polish space. Convergence in $\W_p$ is equivalent to weak convergence plus convergence of the $p$-th moments, see e.g.\ \cite[Chapter 5]{Vi09}. 

As already outlined in the introduction, many relevant properties of stochastic processes are not stable w.r.t.\ the usual Wasserstein distance. This motivates to consider a stronger distance on $\mathcal{P}(\X)$, namely the adapted Wasserstein distance. This distance is defined by restricting the admissible couplings to those which respect the flow of information. Formally, this leads to the concept of bicausal couplings. 
\begin{definition}[Bicausal couplings of laws on the path-space]\label{def:bc_canonical}
	Let $\mu\in\mathcal{P}(\X)$ and $\nu\in\mathcal{P}(\Y)$. A coupling
	$\pi\in\scpl(\mu,\nu)$ is bicausal if under $\pi$ for every $t \le N$
\[
Y_{1:t} \indep_{X_{1:t}} X  \qquad \text{and} \qquad X_{1:t} \indep_{Y_{1:t}} Y,
\]
where $X_t : \X \to \X_t$ and $Y_t : \Y \to \Y_t$ are the coordinate processes. 	
\end{definition}

\begin{definition}\label{def:AW_canonical}
The adapted Wasserstein distance between $\mu, \nu \in \mathcal{P}(\X)$ is defined as
\[
\AW_p(\mu,\nu) = \inf_{ \pi \in \scplbc(\mu,\nu) } \left\{ \int d(x,y)^p \pi(dx,dx) \right\}^{\frac{1}{p}}.
\]   
\end{definition}

While the adapted Wasserstein distance has many desirable properties (e.g.\ stability of optimal control problems), it is not a complete distance on $\mathcal{P}(\mathcal{X})$. 

\subsection{The space of filtered processes}\label{sec:recap.FP}
In this section we formally introduce the space of filtered process $\FFP$, which turns out to be the completion of $\mathcal{P}(\X)$ w.r.t.\ the adapted Wasserstein distance. Recall that a filtered process is a five tuple
\[ 
\fp{X}= (\Omega, \F, \P, (\F_t)_{t=1}^N, (X_t)_{t=1}^N),
\]
where $X =(X_t)_{t=1}^N$ is a stochastic process that is adapted to $(\F_t)_{t=1}^N$. In contrast to Definition~\ref{FPDef} in the introduction we do not restrict to processes with values in $\R^d$, but we allow $X_t$ to take values in the general Polish space $\X_t$, as this turns out to be technically more convenient for our purposes. 
 
In order to define the adapted Wasserstein distance between filtered processes, we need to introduce the notion of (bi)causal couplings between filtered processes. 

\begin{definition}[Causal couplings, \protect{\cite[Definition 2.1]{BaBePa21}}]Let $\fp{X}$ and $\fp{Y}$ be filtered processes. 
A coupling $\pi$ between $\fp{X}$ and $\fp{Y}$ is a probability measure on $\Omega^{\fp X} \times \Omega^{\fp Y}$ with marginals $\P^{\fp X}$ and $\P^{\fp Y}$. It is called
\label{def:causalcoup} 
\begin{enumerate}[label=(\roman*)]
\item  \emph{causal} from  $\fp{X}$ to $\fp{Y}$ if $\F^
{\fp{X},\fp{Y}}_{N,0}$ and $\F^
{\fp{X},\fp{Y}}_{0,t}$ are conditionally independent given $\F^
{\fp{X},\fp{Y}}_{t,0}$  under $\pi$ for every $t \in \{1, \dots, N \}$.
 \item  \emph{bicausal} if it is causal from  $\fp{X}$ to $\fp{Y}$ and causal from  $\fp{Y}$ to $\fp{X}$.
\end{enumerate}

We use the notation $\cpl(\fp{X},\fp{Y})$ for the set of couplings, $\cplc(\fp{X},\fp{Y})$ and $\cplbc(\fp{X},\fp{Y})$ for the set of causal and bicausal couplings, respectively.
\end{definition}
Note that the notion of (bi-)causal couplings can also be used for stochastic bases. The following criterion for causality will be useful throughout this article. 

\begin{lemma}\label{lem:causal_eqiv}
For $\pi \in \cpl(\fp X, \fp Y)$ the following are equivalent:
 	\begin{enumerate}[label=(\roman*)]
 		\item \label{it:causal CI}$\pi$ is causal from $\fp X$ to $\fp Y$.
 		\item \label{it:causal CI1}
 		$\E_\pi[ U | \F^{\fp X,\fp Y}_{t,t} ]
 		= \E_\pi [ U | \F^{\fp X,\fp Y}_{t,0} ]$ for all $1\leq t \leq N$ and bounded $\F_N^\fp X$-measurable $U$.
 		\item \label{it:causal CI2}
 		$\E_\pi[ V | \F^{\fp X,\fp Y}_{N,0} ]
 		= \E_\pi[ V | \F^{\fp X,\fp Y}_{t,0} ]$
 		for all $1\leq t\leq N$ and bounded $\F_t^\fp Y$-measurable $V$.
 	\end{enumerate}
\end{lemma}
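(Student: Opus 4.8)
The plan is to prove the equivalence of the three characterizations of causality for a coupling $\pi \in \cpl(\fp X, \fp Y)$ by a cyclic argument, translating the conditional independence defining causality into the more operational ``tower'' identities for conditional expectations. Throughout I would work with the product filtrations on $\Omega^{\fp X}\times\Omega^{\fp Y}$ and the convention $\F^{\fp X}_0 = \{\emptyset, \Omega^{\fp X}\}$, $\F^{\fp X,\fp Y}_{0,t} = \F^{\fp Y}_t$, etc., as fixed in the notation subsection.

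\emph{Proof sketch.} First, recall the standard measure-theoretic fact that for sub-$\sigma$-algebras $\mathcal{G}, \mathcal{H}, \mathcal{K}$ of a probability space, $\mathcal{G}$ and $\mathcal{H}$ are conditionally independent given $\mathcal{K}$ if and only if for every bounded $\mathcal{G}$-measurable $U$ one has $\E[U \mid \mathcal{H} \vee \mathcal{K}] = \E[U \mid \mathcal{K}]$, and this is symmetric in $\mathcal{G}$ and $\mathcal{H}$ (see e.g.\ Kallenberg). I would apply this with $\mathcal{G} = \F^{\fp X,\fp Y}_{N,0}$, $\mathcal{H} = \F^{\fp X,\fp Y}_{0,t}$ and $\mathcal{K} = \F^{\fp X,\fp Y}_{t,0}$. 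The key bookkeeping observation is that $\mathcal{K} \subseteq \mathcal{G}$ (since $\F^{\fp X}_t \subseteq \F^{\fp X}_N$), so $\mathcal{G} \vee \mathcal{K} = \mathcal{G} = \F^{\fp X,\fp Y}_{N,0}$, and that $\mathcal{H} \vee \mathcal{K} = \F^{\fp X}_t \otimes \F^{\fp Y}_t = \F^{\fp X,\fp Y}_{t,t}$.

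Now \ref{it:causal CI}$\Leftrightarrow$\ref{it:causal CI1}: by the cited fact applied in the form ``$\E[U \mid \mathcal{H}\vee\mathcal{K}] = \E[U\mid\mathcal{K}]$ for bounded $\mathcal{G}$-measurable $U$'', causality from $\fp X$ to $\fp Y$ at level $t$ is equivalent to $\E_\pi[U \mid \F^{\fp X,\fp Y}_{t,t}] = \E_\pi[U \mid \F^{\fp X,\fp Y}_{t,0}]$ for all bounded $\F^{\fp X}_N$-measurable $U$; quantifying over $t$ gives \ref{it:causal CI1}. For \ref{it:causal CI}$\Leftrightarrow$\ref{it:causal CI2}: use the symmetric form of the fact, namely ``$\E[V \mid \mathcal{G}\vee\mathcal{K}] = \E[V\mid\mathcal{K}]$ for bounded $\mathcal{H}$-measurable $V$''. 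Here $\mathcal{G}\vee\mathcal{K} = \F^{\fp X,\fp Y}_{N,0}$ and $\mathcal{K} = \F^{\fp X,\fp Y}_{t,0}$, so this reads $\E_\pi[V \mid \F^{\fp X,\fp Y}_{N,0}] = \E_\pi[V\mid\F^{\fp X,\fp Y}_{t,0}]$ for all bounded $\F^{\fp Y}_t$-measurable $V$, which upon quantifying over $t$ is exactly \ref{it:causal CI2}. This closes the cycle.

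I do not anticipate a serious obstacle here; the statement is essentially a repackaging of the elementary characterization of conditional independence. The one point that requires care — and that I would state explicitly — is the identification $\mathcal{H} \vee \mathcal{K} = \F^{\fp X,\fp Y}_{t,t}$, i.e.\ that the $\sigma$-algebra generated by $\{\emptyset,\Omega^{\fp X}\}\otimes\F^{\fp Y}_t$ together with $\F^{\fp X}_t \otimes \{\emptyset,\Omega^{\fp Y}\}$ equals the product $\F^{\fp X}_t\otimes\F^{\fp Y}_t$; this is standard but worth noting since it is what makes the conditioning $\sigma$-algebras in \ref{it:causal CI1} come out as the ``diagonal'' product ones. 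If one prefers to avoid citing the conditional-independence lemma, an alternative is a direct cyclic argument \ref{it:causal CI1}$\Rightarrow$\ref{it:causal CI2}$\Rightarrow$\ref{it:causal CI}$\Rightarrow$\ref{it:causal CI1} using the tower property and a monotone-class / functional-form argument to reduce to indicator test functions, but invoking the known equivalence is cleaner.
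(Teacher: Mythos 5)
The paper states this lemma without proof, treating it as a routine consequence of standard facts about conditional independence; your argument supplies exactly the routine proof that the paper leaves implicit, and it is correct. The three observations you isolate — that $\mathcal{K}\subseteq\mathcal{G}$ so that $\mathcal{G}\vee\mathcal{K}=\mathcal{G}=\F^{\fp X,\fp Y}_{N,0}$, that $\mathcal{H}\vee\mathcal{K}=\F^{\fp X,\fp Y}_{t,t}$ (the generating-sets identity for product $\sigma$-algebras), and the symmetry of the characterization ``$\mathcal{G}\indep_{\mathcal{K}}\mathcal{H}$ iff $\E[U\mid\mathcal{H}\vee\mathcal{K}]=\E[U\mid\mathcal{K}]$ for bounded $\mathcal{G}$-measurable $U$'' (Kallenberg, Prop.\ 6.6) — are precisely what is needed, and applying them with $\mathcal{G}=\F^{\fp X,\fp Y}_{N,0}$, $\mathcal{H}=\F^{\fp X,\fp Y}_{0,t}$, $\mathcal{K}=\F^{\fp X,\fp Y}_{t,0}$ yields \ref{it:causal CI}$\Leftrightarrow$\ref{it:causal CI1} and \ref{it:causal CI}$\Leftrightarrow$\ref{it:causal CI2} directly. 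The only cosmetic point worth a word is the silent identification of a bounded $\F_N^{\fp X}$-measurable function on $\Omega^{\fp X}$ with its pullback along the first projection, which is $\F^{\fp X,\fp Y}_{N,0}$-measurable on the product space; you handle the analogous identification for $\mathcal{H}\vee\mathcal{K}$ explicitly, so this is entirely in the same spirit.
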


We are now ready to define the adapted Wasserstein distance on $\FP_p$. 
\begin{definition}[Adapted Wasserstein distance, \protect{\cite[p8]{BaBePa21}}] \label{def:AW}
The \emph{adapted Wasserstein distance} between $\fp X, \fp Y \in \FP_p$ is defined by
\begin{align*}
\label{eq:AWdef}
\AW_p^p(\fp{X},\fp{Y}):=\inf_{\pi\in\cplbc(\fp X, \fp Y) } \E_{\pi}[d(X,Y)^p].
\end{align*}
\end{definition}
It is shown in \cite[Theorem 4.11]{BaBePa21} that the equivalence relation induced by identifying processes $\fp X, \fp Y$ if $\AW_p(\fp X, \fp Y)=0$ coincides with the equivalence relation of `having the same probabilistic properties' defined by Hoover--Keisler. Therefore, the space of filtered processes $\FFP_p$ is the factor space
\[
\FFP_p = \FP_p /_{\AW_p}.
\]

The information process  of a filtered process  provides a canonical way to capture the information in filtrations that is relevant for the process at each time step. It takes values in the canonical space, which is defined as follows.
 \begin{definition}[Canonical space, \protect{\cite[Definition 3.1]{BaBePa21}}]
	\label{def:canonical.space.Z}
 Starting with $(\Z_N,d_{\Z_N}):=(\X_N,d)$ we define recursively
 \begin{align*} 
 	\Z_t := \Z_t^-\times\Z_t^+ :=\X_t\times\Pc_p(\Z_{t+1})
 \end{align*}
 for $t=N-1,\dots,1$ and equip this space with the metric $d_{\Z_t}^p:= d_t^p + \W_{p}^p$.
The \emph{canonical filtered space} is given by the triplet
	\begin{equation} \label{eq:CFS}
		\left( \Z, \F^\Z, (\F^\Z_t)_{t=1}^N \right),
	\end{equation}
	 where $\Z:=\Z_{1:N}$, $\F^\Z$ is the Borel-$\sigma$-field on $\Z$ and $\F_t^\Z$ is the $\sigma$-algebra generated by the projection $\pj_{1:t}:\Z\rightarrow \Z_{1:t}$.
We define the following evaluation maps on canonical filtered spaces
\begin{align}\label{def:evaluation map}
Z^- \colon \Z \to \X_{1:N}  \text{  with  } Z^-(z) &:= (Z^-_t(z))_{t = 1}^N := (z_t^-)_{t = 1}^N\\
Z^+ \colon \Z \to \Z^+_{1:N}  \text{  with  } Z^+(z) &:= (Z^+_t(z))_{t = 1}^N := (z_t^+)_{t = 1}^N.
\end{align}  
\end{definition}

\begin{definition}[The information process, \protect{\cite[Definition 3.2]{BaBePa21}}]
	\label{def:ip}
	Let $\fp X$ be a filtered process. Its \emph{information process} $\ip(\fp X)=(\ip_t(\fp X))_{t=1}^N$ is defined recursively by	$\ip_N(\fp X)=X_N$ and
		\begin{align*}
		\ip_t(\fp X) =(\ip_t^-(\fp{X}), \ip_t^+(\fp{X})) \colon\Omega^\fp X \to \mathcal{Z}_t, \\
		\omega \mapsto \left( X_t(\omega), \Law(\ip_{t+1}(\fp{X}) | \F_t^\fp{X})(\omega) \right)
		\end{align*}
  for $t<N$.
\end{definition}
Noting that the spaces introduced in Definition \ref{def:canonical.space.Z} are Polish, the conditional probabilities in the definition above exist. 

These concepts yield a canonical way to represent $\fp X \in \FP_p$ by equipping the canonical filtered space from \eqref{eq:CFS} with the law of $\ip(\fp X)$ and considering the evaluation map $Z^-$  introduced in \eqref{def:evaluation map}.
\begin{definition}[Associated canonical filtered processes, \protect{\cite[Definition 3.8]{BaBePa21}}]
		\label{def:CFP.associated.to.FP}
		The \emph{canonical filtered process  associated to} a filtered process  $\fp X$ is given by
		\begin{align*}
			\widehat{\fp X} = \left( \Z, \F^\Z, \Law(\ip(\fp X)),(\F^\Z_t)_{t = 1}^N,  Z^- \right).
		\end{align*}
		\end{definition}
Note that we have $\AW(\fp X, \widehat{\fp X})=0$, i.e.\ $\widehat{\fp X}$ is indeed a representative of the equivalence class of $\fp X$. Moreover, the canonical filtered process is based on a standard Borel probability space.

\begin{theorem}[Isometry, \protect{\cite[Theorem 3.10]{BaBePa21}}] \label{thm:isometry}
 For $\fp X$ and $\fp Y \in \FP_p$ and their associated canonical processes $\widehat{\fp X}$ and $\widehat{\fp Y}$ it holds that
	\begin{align} \label{eq:isometry}
		\AW_p\left( \fp X, \fp Y \right) = \AW_p( \widehat{\fp X}, \widehat{\fp Y} ) = \W_p\left( \Law(\ip_1(\fp X)), \Law(\ip_1(\fp Y)) \right).
	\end{align}
 Moreover, $(\FFP_p,\AW_p)$ is a complete metric space that is isometrically isomorphic to $(\Z_1,\W_p)$.
\end{theorem}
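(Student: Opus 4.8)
\medskip
\noindent\textbf{Proof strategy.}
I would deduce the chain of identities \eqref{eq:isometry} from two facts: the \emph{distance formula} $\AW_p(\fp X,\fp Y)=\W_p\big(\Law(\ip_1(\fp X)),\Law(\ip_1(\fp Y))\big)$, and the vanishing $\AW_p(\fp X,\widehat{\fp X})=0$. Together with the idempotency $\Law(\ip_1(\widehat{\fp X}))=\Law(\ip_1(\fp X))$, the distance formula applied to $\fp X,\fp Y$ and to $\widehat{\fp X},\widehat{\fp Y}$ produces the equalities involving $\W_p$, while $\AW_p(\fp X,\widehat{\fp X})=0$ (together with the gluing of bicausal couplings) gives $\AW_p(\fp X,\fp Y)=\AW_p(\widehat{\fp X},\widehat{\fp Y})$. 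For the ``moreover'' part I would show that $\fp X\mapsto\Law(\ip_1(\fp X))$ is a bijective isometry of $(\FFP_p,\AW_p)$ onto $(\Pc_p(\Z_1),\W_p)$ and invoke completeness and separability of the latter.

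\medskip
\noindent\textbf{Step 1: continuation kernels; $\AW_p(\fp X,\widehat{\fp X})=0$ and idempotency.}
The backbone is a ``continuation kernel'' lemma, proved by backward induction over $t$: for every $\fp X\in\FP_p$ the conditional law $\Law\big(\ip_{t+1}(\fp X),\dots,\ip_N(\fp X)\mid\F^{\fp X}_t\big)$ is $\P^{\fp X}$-a.s.\ a measurable function of $\ip_t(\fp X)$ alone, equal to $K_t(\ip_t(\fp X))$ for a kernel $K_t\colon\Z_t\to\Pc_p(\Z_{t+1:N})$ \emph{independent of} $\fp X$, namely $K_{N-1}(z_{N-1})=z_{N-1}^+$ and $K_t(z_t)=z_t^+(dz_{t+1})\otimes K_{t+1}(z_{t+1})(dz_{t+2:N})$; the induction step combines the tower property with the defining identity $\ip_t^+(\fp X)=\Law(\ip_{t+1}(\fp X)\mid\F^{\fp X}_t)$ of Definition~\ref{def:ip}. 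Two consequences: first, $\Law(\ip(\fp X))=\Law(\ip_1(\fp X))\otimes K_1$, so the canonical process $\widehat{\fp X}$ satisfies $\ip(\widehat{\fp X})=\id_{\Z}$ a.s., hence $\Law(\ip_1(\widehat{\fp X}))=\Law(\ip_1(\fp X))$; second, the coupling $\gamma$ of $\fp X$ with $\widehat{\fp X}$ given by the law of $\big(\omega,\ip(\fp X)(\omega)\big)$ under $\P^{\fp X}$ is bicausal --- causality from $\fp X$ to $\widehat{\fp X}$ because $\ip_{1:t}(\fp X)$ is $\F^{\fp X}_t$-measurable, and causality from $\widehat{\fp X}$ to $\fp X$, via Lemma~\ref{lem:causal_eqiv}, because $\Law(\ip_{t+1:N}(\fp X)\mid\F^{\fp X}_t)$ is $\sigma(\ip_t(\fp X))$-measurable --- and has zero cost, so (after noting $\widehat{\fp X}\in\FP_p$ by a routine moment bound) $\AW_p(\fp X,\widehat{\fp X})=0$.

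\medskip
\noindent\textbf{Step 2: the distance formula.}
By Step~1 and gluing it remains to show that for $\mu:=\Law(\ip(\fp X))$, $\nu:=\Law(\ip(\fp Y))$ on $\Z=\Z_{1:N}$ with coordinate process $Z^-$, the bicausal transport cost $\AW_p^p(\widehat{\fp X},\widehat{\fp Y})=\inf_{\pi\in\scplbc(\mu,\nu)}\int d(z^-,w^-)^p\,\pi(dz,dw)$ equals $\W_p^p(\mu_1,\nu_1)$ for the time-$1$ marginals $\mu_1,\nu_1$ on $\Z_1$. I would argue by backward induction on the number of time steps, using: (a) the layer-wise structure of bicausal couplings --- every $\pi\in\scplbc(\mu,\nu)$ disintegrates as $\pi=\pi_1(dz_1,dw_1)\otimes\pi^{z_1,w_1}$ with $\pi_1\in\scpl(\mu_1,\nu_1)$ and, $\pi_1$-a.s., $\pi^{z_1,w_1}$ a bicausal coupling of the conditionals $\mu^{z_1}:=\Law_\mu(z_{2:N}\mid z_1)$, $\nu^{w_1}$, and conversely, via measurable selection, every such family reassembles into a bicausal coupling (both read off Lemma~\ref{lem:causal_eqiv}); (b) the self-similarity $d_{\Z_1}(z_1,w_1)^p=d_1(z_1^-,w_1^-)^p+\W_p^p(z_1^+,w_1^+)$ of the canonical metric; (c) additivity of the cost, $d(z^-,w^-)^p=\sum_t d_t(z_t^-,w_t^-)^p$; and (d) the continuation-kernel identity $\mu^{z_1}=K_1(z_1)$, whose time-$2$ marginal is exactly $z_1^+$. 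Fed into the inductive hypothesis for the $(N-1)$-step subproblems, (d) gives $\inf\{\E_{\pi^{z_1,w_1}}[\sum_{t\ge 2}d_t^p] : \pi^{z_1,w_1}\ \mathrm{bicausal}\}=\W_p^p(z_1^+,w_1^+)$, and then (a)--(c) collapse the remaining optimization to $\inf_{\pi_1\in\scpl(\mu_1,\nu_1)}\int d_{\Z_1}(z_1,w_1)^p\,\pi_1(dz_1,dw_1)=\W_p^p(\mu_1,\nu_1)$; the base case $N=1$ is the tautology $\AW_p=\W_p$.

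\medskip
\noindent\textbf{Step 3: the isometric isomorphism, and the main obstacle.}
The map $\Phi\colon\fp X\mapsto\Law(\ip_1(\fp X))$ sends $\FP_p$ into $\Pc_p(\Z_1)$ (a routine moment estimate), is an isometry by the distance formula, hence descends to an isometric embedding of $(\FFP_p,\AW_p)$ into $(\Pc_p(\Z_1),\W_p)$; it is onto because for $m\in\Pc_p(\Z_1)$ the measure $m\otimes K_1$ on $\Z$ is, by construction of the $K_t$, a fixed point of the information-process map, so the filtered process obtained by equipping the canonical filtered space \eqref{eq:CFS} with $m\otimes K_1$ has $\Law(\ip_1)=m$. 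Therefore $(\FFP_p,\AW_p)$ is isometrically isomorphic to $(\Pc_p(\Z_1),\W_p)$, which is complete and separable since $\Z_1$ is Polish with complete metric $d_{\Z_1}$ (seen by backward induction, as $p$-Wasserstein spaces over complete spaces are complete). I expect the main obstacle to be the backward induction of Step~2: one must pass between a single bicausal coupling on the full path space and its layer-by-layer disintegration into conditionally bicausal couplings, and reassemble conditionally optimal couplings \emph{measurably} into an admissible global coupling; the supporting continuation-kernel lemma is conceptually transparent but requires careful bookkeeping with regular conditional distributions. The remaining points --- moment estimates, the fixed-point/surjectivity check, completeness of $d_{\Z_1}$ --- are routine.
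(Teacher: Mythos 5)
This theorem is quoted from \cite{BaBePa21} and not re-proved in the present paper, so there is no in-paper argument against which to compare. Your reconstruction is correct and, to the best of my knowledge, follows the same strategy as the cited reference: establish the continuation-kernel/self-awareness property of the information process, use it to deduce $\AW_p(\fp X,\widehat{\fp X})=0$ together with the fixed-point identity $\ip(\widehat{\fp X})=\id_\Z$ (hence idempotency of $\fp X\mapsto\Law(\ip_1(\fp X))$), and then obtain the distance formula by backward induction on $N$ via the layer-by-layer disintegration of bicausal couplings on $\Z$ combined with the self-similarity $d_{\Z_1}^p=d_1^p+\W_p^p$; the points you flag as requiring care (reassembling conditionally optimal bicausal couplings measurably, and the moment estimate for surjectivity) are indeed the only nontrivial bookkeeping steps.
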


\begin{remark}
Let $\fp X, \fp Y \in \FP_p$. It is well known from standard optimal transport theory (see e.g.\ \cite[Theorem 1.3]{Vi03}) that the infimum in $\W_p\left( \Law(\ip_1(\fp X)), \Law(\ip_1(\fp Y)) \right)$ is attained, i.e.\ that there is an optimal coupling $\widehat{\pi} \in \scpl(\Law(\ip_1(\fp X)),  \Law(\ip_1(\fp Y)))$. 

Provided that the stochastic bases of $\fp X, \fp Y$ are standard Borel, the arguments given in the proof of \cite[Theorem 3.10]{BaBePa21} show that this coupling  $\widehat{\pi}$ induces an optimal coupling $\pi \in \cplbc(\fp X, \fp Y)$. Hence, if the stochastic bases of $\fp X, \fp Y$ are standard Borel, the infimum in $\AW_p(\fp X, \fp Y)$ is attained. 
\end{remark}

We conclude with a short explanation how the definitions of adapted Wasserstein distance and bicausal coupling given in this subsection are compatible with those given in Subsection~\ref{sec:recap.canon}. 

\begin{remark}\label{rem:embed}
A distribution of the path space $\mu \in \PP(\X)$ naturally induces a filtered process, namely the coordinate process with the canonical filtration
\[
\mathbb{S}^\mu := ( \X, \B_\X, \mu, \sigma(X_{1:t})_{t=1}^N, X),
\]
where $X_t : \X \to \X_t$ is the coordinate processes. By comparing Definition~\ref{def:bc_canonical} and Definition~\ref{def:causalcoup}, it is easy to see that 
\[
\cplbc(\mathbb{S}^\mu, \mathbb{S}^\nu) = \scplbc(\mu,\nu).
\]
Therefore, the map 
\[
\PP(\X)  \to \FFP_p : \mu \mapsto \mathbb{S}^\mu
\]
is an isometric embedding of $\PP(\X)$ equipped with the distance  $\AW_p$ (cf.\ Definition~\ref{def:AW_canonical})  into  the factor space $\FFP_p = \FP_p/_{\AW_p}$ equipped with the distance $\AW_p$ (cf.\ Definition~\ref{def:AW}). It is shown in \cite[Theorem~1.2]{BaBePa21} that the range of this embedding is dense, i.e.\ that $(\FFP_p,\AW_p)$ is the metric completion of $(\PP(\X),\AW_p)$.     
\end{remark}


\section{Self-aware Lifts and Equivalence of Stochastic Processes}\label{sec:salift}
The aim of this section is to prove the characterization of the Hoover--Keisler equivalence in terms of lifts. As it appears to be natural in this context, we lift the filtered processes to so-called self-aware processes. At the end of this section, we briefly discuss how to describe convergence in adapted distribution using lifts.  



\begin{definition}[Self-aware process]\label{def:sa}
A filtered process $(\Omega,\F,\P,(\F_t)_{t=1}^N,({X}_t)_{t=1}^N)$ is called \emph{self-aware} if for every $t\leq N$
\begin{align}\label{eq:sa}
	\Law(X | \F_t)=\Law(X | X_{1:t}).
\end{align}
\end{definition}
Note that \eqref{eq:sa} can be interpreted as the following conditional independence: Given $X_{1:t}$, the process $X$ is independent of $\F_t$. Hence, self-aware processes are precisely those processes, for which the filtration carries no extra information about the paths $X$. This is fact is made precise in Corollary~\ref{cor:sa} at the end of this section, which states the self-aware processes are precisely those processes that are $\simhk$-equivalent to a process with canonical filtration. 

\begin{definition}[Self-aware lift]\label{def:sa_lift}
Let $\fp{X}=(\Omega, \F, \P, (\F_t)_{t=1}^N, (X_t)_{t=1}^N)$ be a filtered process and $\iuc{X}= (\iuc{X_t})_{t=1}^N$ be adapted to $(\F_t)_{t=1}^N$. We write $\sawu{X}_t=(X_t,\iuc{X_t})$. If the filtered processes defined by
$$
\fpsawu{X}:=(\Omega, \F, \P, (\F_t)_{t=1}^N, (\sawu X_t)_{t=1}^N)$$
is self-aware, we call $\sawu{X}$ a \emph{self-aware lift} of $\fp X$. 
\end{definition}
\begin{remark}\label{rem:sa_vs_Markov}
It is easy to see that every Markov process is self-aware. Conversely, if $(X_t)_{t=1}^N$ is self-aware, then the process $(X_{1:t})_{t=1}^N$ is Markov. In light of this observation, self-aware lifts are only slight generalization of Markov lifts. If $\sawu{X}=(X, \iuc{X})$ is a self-aware lift of $\fp X$, then process defined by $\sawu{X}'_t := (X_t, (X_{1:t}, \iuc{X_{1:t}})) $ is a Markov lift of $\fp X$. 
\end{remark}


\begin{lemma}[\protect{\cite[Lemma 3.3]{BaBePa21}}]
\label{lem:ip.self.aware}
For every $\fp{X} \in \FP$, the information process $\ip(\fp{X})$ is a self-aware lift of $\fp{X}$.
\end{lemma}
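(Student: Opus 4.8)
The plan is to verify directly that the filtered process $\widehat{\fp X}^{\,\mathrm{ip}} := (\Omega^{\fp X}, \F^{\fp X}, \P^{\fp X}, (\F^{\fp X}_t)_{t=1}^N, (\ip_t(\fp X))_{t=1}^N)$ satisfies the self-awareness identity
\[
\Law\bigl(\ip(\fp X)\,\big|\,\F^{\fp X}_t\bigr) = \Law\bigl(\ip(\fp X)\,\big|\,\ip_{1:t}(\fp X)\bigr)
\]
for every $t \le N$. Since $\ip(\fp X)$ is by construction adapted to $(\F^{\fp X}_t)_{t=1}^N$, this is the only thing to check. Because $\ip(\fp X)$ is Markov-tail-determined by its past-and-present in a strong sense, I expect the cleanest route is a backward induction on $t$ (from $t = N$ down to $t=1$), peeling off one coordinate at a time, or equivalently to show the equivalent reformulation that $\ip_{t+1:N}(\fp X)$ is conditionally independent of $\F^{\fp X}_t$ given $\ip_t(\fp X)$ (recall $\ip_t(\fp X) = (X_t, \Law(\ip_{t+1}(\fp X)\mid\F^{\fp X}_t))$), and then chain these one-step statements.

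The key steps, in order, are as follows. First I would recall the defining recursion $\ip_t(\fp X) = \bigl(X_t,\ \Law(\ip_{t+1}(\fp X)\mid\F^{\fp X}_t)\bigr)$ and observe the crucial point: the second coordinate $\ip_t^+(\fp X)$ is, by definition, a regular conditional law of $\ip_{t+1}(\fp X)$ given $\F^{\fp X}_t$, hence it is $\sigma(\ip_t(\fp X))$-measurable and equals $\Law(\ip_{t+1}(\fp X)\mid\F^{\fp X}_t)$ $\P^{\fp X}$-a.s. Second, I would prove the one-step statement: conditionally on $\F^{\fp X}_t$, the law of $\ip_{t+1}(\fp X)$ depends only on $\ip_t^+(\fp X)$ — this is essentially a tautology from the definition of $\ip_t^+$, but it needs to be phrased as a conditional-independence statement, namely $\ip_{t+1}(\fp X) \indep_{\ip_t(\fp X)} \F^{\fp X}_t$. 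Third, I would run the backward induction: assuming $\ip_{t+1:N}(\fp X)$ is a deterministic function of $\ip_{t+1}(\fp X)$ together with information that is conditionally independent of $\F^{\fp X}_t$ given $\ip_{t+1}(\fp X)$ (the inductive hypothesis, using the tower property / that $\F^{\fp X}_t \subseteq \F^{\fp X}_{t+1}$ so $\ip_{t+2:N}(\fp X)\indep_{\ip_{t+1}(\fp X)}\F^{\fp X}_{t+1}$ gives $\ip_{t+2:N}(\fp X)\indep_{\ip_{t+1}(\fp X)}\F^{\fp X}_t$), combine with the one-step statement to conclude $\ip_{t+1:N}(\fp X)\indep_{\ip_t(\fp X)}\F^{\fp X}_t$. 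Fourth, unwind this chain of conditional independences to recover the self-awareness identity for $\ip(\fp X) = (\ip_{1:t}(\fp X), \ip_{t+1:N}(\fp X))$: since $\ip_{1:t}(\fp X)$ is $\F^{\fp X}_t$-measurable, the conditional law of $\ip(\fp X)$ given $\F^{\fp X}_t$ is the pushforward under $(\ip_{1:t}(\fp X), \cdot)$ of the conditional law of $\ip_{t+1:N}(\fp X)$ given $\F^{\fp X}_t$, which by Step 3 factors through $\ip_{1:t}(\fp X)$ alone.

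The main obstacle I anticipate is purely bookkeeping: carefully formulating the inductive hypothesis so that the conditioning $\sigma$-algebras telescope correctly (one must use $\F^{\fp X}_t \subseteq \F^{\fp X}_{t+1}$ and the tower rule to push a conditional-independence statement "down" from level $t+1$ to level $t$), and handling the regular-conditional-probability measurability issues — e.g.\ that $\omega \mapsto \Law(\ip_{t+1}(\fp X)\mid\F^{\fp X}_t)(\omega)$ is a well-defined Borel map into $\Pc_p(\Z_{t+1})$, which is guaranteed since the canonical spaces $\Z_t$ are Polish (as noted just before the lemma). No deep idea is needed beyond the observation that "the information process records exactly the conditional law of its own future", so the self-awareness is built into the definition; the work is making this precise via the conditional-independence chain.
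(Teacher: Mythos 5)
Your plan is a correct direct verification, but there is nothing in the paper to compare it against: the present paper does not prove this lemma itself, it simply cites \cite[Lemma~3.3]{BaBePa21} and uses the result as a black box.

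That said, the argument you outline is sound and is essentially the canonical way to prove it. The one-step statement $\ip_{t+1}(\fp X)\indep_{\ip_t(\fp X)}\F_t^{\fp X}$ is indeed immediate from $\ip_t^+(\fp X)=\Law(\ip_{t+1}(\fp X)\mid\F_t^{\fp X})$ being $\sigma(\ip_t(\fp X))$-measurable and $\sigma(\ip_t(\fp X))\subseteq\F_t^{\fp X}$. The step of pulling the inductive hypothesis down from $\F_{t+1}^{\fp X}$ to $\F_t^{\fp X}$ (i.e.\ passing to a smaller conditioning $\sigma$-algebra while keeping $\ip_{t+1}(\fp X)$ fixed) is a standard tower-property fact, and the recombination step goes through by the usual product-form/monotone-class computation: for bounded $f,g$,
\begin{align*}
\E\bigl[f(\ip_{t+1})g(\ip_{t+2:N})\mid\F_t\bigr]
&=\E\bigl[f(\ip_{t+1})\,\E[g(\ip_{t+2:N})\mid\F_t\vee\sigma(\ip_{t+1})]\,\big|\,\F_t\bigr]\\
&=\E\bigl[(f\tilde g)(\ip_{t+1})\mid\F_t\bigr]
=\E\bigl[(f\tilde g)(\ip_{t+1})\mid\ip_t\bigr],
\end{align*}
using the (pulled-down) inductive hypothesis in the second equality and the one-step statement in the third. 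Your Step 4, the factorization $\Law(\ip\mid\F_t)=\delta_{\ip_{1:t}}\otimes\Law(\ip_{t+1:N}\mid\F_t)$ via $\F_t$-measurability of $\ip_{1:t}$, then delivers the self-awareness identity. The only thing worth stating explicitly that you leave slightly implicit is the base case (either $t=N$, where $\ip_{N+1:N}$ is vacuous, or $t=N-1$, which is just the one-step statement); everything else is correctly in place, including the remark that the regular conditional laws are well-defined Borel maps into $\Pc_p(\Z_{t+1})$ because the $\Z_t$ are Polish.
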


It turns out that the information process is not an arbitrary self-aware lift, but it has a certain minimality property. To make this precise, we need the following

\begin{definition}[Minimal self-aware lift]
\label{def:min_sa_lift}
 Let $\wcheck{X}=(X,\iuc{X})$ be a self-aware lift of a filtered process $\fp{X}$. Then $\wcheck{X}$ is  called \emph{minimal self-aware lift} if for every further self-aware lift $\wcheck{X}'=(X,\iuc{X'})$ of $\fp{X}$ there exists an adapted\footnote{In the present setting, this means that $T$ is of the form  $T(x,\iuc{x}') = (T_1(x_1, \iuc{x_1}'), T_2(x_{1:2}, \iuc{x_{1:2}}'), \dots T_n(x,\iuc{x}'))$.  } map $T:\sawu{\X}' \to \sawu{\X}$ such that $T(\wcheck{X}')=\wcheck{X}$.
\end{definition}



\begin{proposition}
\label{prop:sa}
For every $\fp X \in \FP$, the information process $\ip(\fp X)$ is a minimal self-aware lift.
\end{proposition}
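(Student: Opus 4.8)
The plan is to show that the information process $\ip(\fp X)$ dominates every self-aware lift via an adapted map, i.e.\ to exhibit, for an arbitrary self-aware lift $\wcheck X' = (X, \iuc{X}')$, an adapted $T \colon \sawu{\X}' \to \Z$ with $T(\wcheck X') = \ip(\fp X)$. The natural candidate is to build $T$ so that, coordinatewise, $T_t(x_{1:t}, \iuc{x_{1:t}}') = \ip_t(\fp X)$, and the key structural fact to exploit is that $\ip_t(\fp X)$ is, by construction, $\F_t^{\fp X}$-measurable, while self-awareness of $\wcheck X'$ says precisely that $\Law(X \mid \F_t^{\fp X})$ is $\sigma(X_{1:t}, \iuc{X_{1:t}}')$-measurable. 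So the engine of the proof is: \emph{whatever is recorded in $\ip_t(\fp X)$ is a (measurable) function of $(X_{1:t}, \iuc{X_{1:t}}')$.}

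Concretely, I would argue by downward induction on $t$. For $t = N$, $\ip_N(\fp X) = X_N$, which is trivially a function of $(X_{1:N}, \iuc{X_{1:N}}')$; set $T_N = \proj_{X_N}$. For the inductive step, suppose $\ip_{t+1}(\fp X) = G_{t+1}(X_{1:t+1}, \iuc{X_{1:t+1}}')$ for a measurable $G_{t+1}$. Then
\[
\ip_t(\fp X) = \bigl(X_t,\ \Law(\ip_{t+1}(\fp X)\mid \F_t^{\fp X})\bigr)
= \bigl(X_t,\ \Law(G_{t+1}(X_{1:t+1},\iuc{X_{1:t+1}}')\mid \F_t^{\fp X})\bigr).
\]
The point is that the second component is $\F_t^{\fp X}$-measurable but we need it to be $\sigma(X_{1:t},\iuc{X_{1:t}}')$-measurable. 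Here one uses the self-awareness of $\wcheck X'$: since $\Law((X,\iuc{X}')\mid\F_t^{\fp X}) = \Law((X,\iuc{X}')\mid X_{1:t},\iuc{X_{1:t}}')$, and $\iuc{X}'$ is adapted, the conditional law of the future $(X_{t+1:N},\iuc{X_{t+1:N}}')$ given $\F_t^{\fp X}$ is a measurable function of $(X_{1:t},\iuc{X_{1:t}}')$; pushing this forward through $G_{t+1}$ gives $\Law(\ip_{t+1}(\fp X)\mid\F_t^{\fp X})$ as a measurable function of $(X_{1:t},\iuc{X_{1:t}}')$. One then needs a measurable-selection statement guaranteeing that "the conditional law as a function on the base" descends to an honest Borel map on $\X_{1:t}\times\sawu{\X}_{1:t}'$; this is standard for Polish spaces (disintegration / regular conditional probabilities are Borel-measurable in the conditioning variable), but it is worth invoking carefully. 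Define $T_t(x_{1:t},\iuc{x_{1:t}}')$ to be $(x_t, \text{that map})$; this is manifestly of the adapted form required in Definition~\ref{def:min_sa_lift}, and $T = (T_t)_{t=1}^N$ then satisfies $T(\wcheck X') = \ip(\fp X)$, using Lemma~\ref{lem:ip.self.aware} to know $\ip(\fp X)$ is itself a self-aware lift so the statement is non-vacuous.

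The main obstacle is the measurability bookkeeping in the inductive step: one must pass from "the conditional law $\Law(\ip_{t+1}(\fp X)\mid\F_t^{\fp X})$ coincides $\P^{\fp X}$-a.s.\ with a $\sigma(X_{1:t},\iuc{X_{1:t}}')$-measurable random variable" to "there is a genuine Borel map $G_t$ on $\X_{1:t}\times\sawu{\X}_{1:t}'$ with $\ip_t(\fp X) = G_t(X_{1:t},\iuc{X_{1:t}}')$", and to do so compatibly across time steps so the resulting $T$ is adapted rather than merely $\F^{\fp X}$-measurable. The clean way to handle this is to work throughout on the \emph{canonical} representatives: replace $\wcheck X'$ by its associated canonical filtered process (Definition~\ref{def:CFP.associated.to.FP}), where the base is standard Borel and the filtration is generated by coordinate projections, so that "$\F_t$-measurable and a.s.\ equal to a function of the first $t$ coordinates" literally means "a Borel function of the first $t$ coordinates"; Lemma~\ref{lem:ip.self.aware} plus the isometry/identification results of Section~2 let one transport the conclusion back. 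A secondary, minor point to check is well-definedness up to the relevant a.s.\ equivalences — the map $T$ need only satisfy $T(\wcheck X') = \ip(\fp X)$ in law / a.s., which is exactly what Definition~\ref{def:min_sa_lift} asks.
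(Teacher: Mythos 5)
Your proposal is essentially the same proof as the paper's: backward induction from $t=N$, using the recursive definition of $\ip_t$, then self-awareness of $\wcheck X'$ to replace conditioning on $\F_t$ by conditioning on $\sawu X'_{1:t}$, and finally a factorization (Doob--Dynkin) lemma to produce the Borel map $T_t$. The paper invokes \cite[Lemma 1.13]{Ka97} at exactly the point where you worry about ``measurable-selection bookkeeping,'' which handles the step cleanly; your proposed detour through canonical representatives is unnecessary (and would need care, since the map $T$ must act on the original path space $\sawu\X'$, not the canonical one), and the concern about the resulting $T$ being adapted is automatic because each $T_t$ obtained this way depends only on $\sawu X'_{1:t}$.
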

\begin{proof}
As $\ip(\fp X)$ is a self-aware lift of $\fp X$ by Lemma \ref{lem:ip.self.aware}, it suffices to show that for any further self-aware lift $\sawu{X}$, there exists an adapted map $T = (T_t)_{t=1}^N$ such that  $\ip_t(\fp{X})=T_t(\sawu{X}_ {1:t})$ for every $t \le N$. We aim to construct $T$ by backward induction. 

For $t=N$, we set $T_N := \pj_{\X_N}$. Indeed, we have  $\ip_N(\fp{X})=X_N=\pj_{\X_N}(\sawu{X})$.

Let $t <N$. Assume that there is a measurable  map $T_t:\sawu\X\to\Z_t$ satisfying $\ip_t(\fp{X})=T_t(\sawu{X}_ {1:t})$. Using the recursive definition of the information process in the first equality and that $\sawu{X}$ is self-aware, we obtain
  \begin{align*}
  \ip_{t-1}(\fp{X})&=(X_{t-1},\Law(\ip_{t}(\fp{X})|\F_{t-1}))\nonumber\\&=(X_{t-1},\Law(T_t(\sawu{X}_{1:t})|\F_{t-1}))\nonumber\\&=(X_{t-1},\Law(T_t(\sawu{X}_{1:t})|\sawu{X}_{1:t-1})).
  \end{align*}
This shows that $\ip_{t-1}(\fp{X})$ is $\sigma(\sawu{X}_{1:t-1})$-measurable.
Therefore, there exists a Borel map $T_{t-1} : \sawu{\X} \to \Z_{t-1}$ satisfying $\ip_{t-1}(\fp{X})=T_{t-1}(\sawu{X}_{1:t-1})$, see e.g.\  \cite[Lemma 1.13]{Ka97}.
\end{proof} 
There is a close connection between self-aware lifts and so-called self-contained filtrations. To make this precise, we first give a definition of self-contained filtrations (which is a translation of \cite[Section~1]{Ho92} to the setting of discrete time processes).
 
\begin{definition}
Let $\fp X = (\Omega,\F,\P, (\F_t)_{t=1}^N, X)$ be a filtered process. 
\begin{enumerate}[label=(\roman*)]
    \item A filtration $(\G_t)_{t=1}^N$ is called \emph{subfiltration} of $(\F_t)_{t=1}^N$, if $\G_t \subset \F_t$ for every $t=1,\dots, N$.
    \item As subfiltration $(\G_t)_{t=1}^N$ is called \emph{self-contained} in $(\F_t)_{t=1}^N$, if $\F_t$ and $\G_N$ are conditionally independent given $\G_t$ for every $t=1,\dots, N$. 
    \item The \emph{intrinsic filtration} of $\fp X$, denoted by $\mathcal{I}(\fp X)$, is the smallest  self-contained subfiltration of $(\F_t)_{t=1}^N$ such that $X$ is adapted to this filtration.
\end{enumerate}
\end{definition}
Note that taking the intersection over a family of filtrations that are self-contained in $(\F_t)_{t=1}^N$ yields a filtration that is self-contained in $(\F_t)_{t=1}^N$. Hence, there exists indeed a smallest self-contained filtration such that $X$ is adapted, see \cite[Lemma~1.2]{Ho92}.

\begin{proposition}\label{prop:sa-sc}
    Let $\fp X = (\Omega,\F,\P, (\F_t)_{t=1}^N, X)$ be a filtered process. 
\begin{enumerate}[label=(\roman*)]
    \item If $Y$ is an adapted process on the stochastic base of $\fp Y$, then $\sigma(Y)$ is a subfiltration of $(\F_t)_{t=1}^N$.
    \item If $Y$ is a self-aware process on the stochastic base of $\fp Y$, then $\sigma(Y)$ is a selfcontained subfiltration of $(\F_t)_{t=1}^N$.
    \item A self-aware lift $\sawu{X}$ is  minimally self-aware if and only if $\sigma(\sawu X) = \mathcal{I}(\fp X)$.
\end{enumerate}   
\end{proposition}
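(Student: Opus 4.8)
The plan is to prove the three items in order, with (i) and (ii) being short and (iii) being the substantive one, where I expect the main work to lie.

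\medskip

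\noindent\textbf{Items (i) and (ii).} For (i), if $Y = (Y_t)_{t=1}^N$ is adapted to $(\F_t)_{t=1}^N$, then by definition $Y_t$ is $\F_t$-measurable, so $\sigma(Y)_t = \sigma(Y_t) \subseteq \F_t$; this is immediate. For (ii), I first note that $\sigma(Y)$ is a subfiltration by (i). To see it is self-contained, I must show that $\F_t$ and $\sigma(Y)_N = \sigma(Y_{1:N}) = \sigma(Y)$ are conditionally independent given $\sigma(Y)_t = \sigma(Y_{1:t})$, for every $t$. But this is exactly the self-awareness condition $\Law(Y \mid \F_t) = \Law(Y \mid Y_{1:t})$ from Definition~\ref{def:sa}, interpreted (as the paper notes right after that definition) as: given $Y_{1:t}$, the process $Y$ is independent of $\F_t$. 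So (ii) is essentially a restatement of the definition.

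\medskip

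\noindent\textbf{Item (iii), the forward direction.} Suppose $\sawu X$ is a minimal self-aware lift. By (ii), $\sigma(\sawu X)$ is a self-contained subfiltration of $(\F_t)_{t=1}^N$ to which $X$ is adapted (since $\sawu X_t = (X_t, \iuc{X_t})$ has $X_t$ as a component), so $\mathcal I(\fp X) \subseteq \sigma(\sawu X)$ by minimality of the intrinsic filtration. For the reverse inclusion I would like to produce, from $\mathcal I(\fp X)$, a competing self-aware lift to which the minimality map applies. The natural candidate is the information process $\ip(\fp X)$ of the filtered process $(\Omega, \F, \P, (\mathcal I(\fp X)_t)_{t=1}^N, X)$ — call it $\fp X^{\mathcal I}$; by Lemma~\ref{lem:ip.self.aware} this is a self-aware lift of $\fp X^{\mathcal I}$, hence a self-aware lift of $\fp X$ whose generated filtration is contained in $\mathcal I(\fp X)$ (one should check the information process of $\fp X^{\mathcal I}$ is $\mathcal I(\fp X)$-adapted, which holds by construction). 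Since $\sawu X$ is minimal, there is an adapted map $T$ with $T(\sawu X) = \ip(\fp X^{\mathcal I})$... but this gives the wrong inclusion. The cleaner route: use instead that a minimal self-aware lift is unique up to adapted isomorphism (any two minimal lifts are connected by adapted maps in both directions, hence generate the same filtration), and that $\ip(\fp X^{\mathcal I})$ should also be shown minimal, forcing $\sigma(\sawu X) = \sigma(\ip(\fp X^{\mathcal I})) \subseteq \mathcal I(\fp X)$. Combined with the first inclusion this yields equality.

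\medskip

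\noindent\textbf{Item (iii), the reverse direction and the main obstacle.} Conversely, suppose $\sigma(\sawu X) = \mathcal I(\fp X)$ for a self-aware lift $\sawu X$. Let $\sawu X'$ be any other self-aware lift. By (ii), $\sigma(\sawu X')$ is self-contained and $X$ is adapted to it, so $\mathcal I(\fp X) \subseteq \sigma(\sawu X')$, i.e. $\sigma(\sawu X) \subseteq \sigma(\sawu X')$. I then need to upgrade this filtration inclusion to the existence of an \emph{adapted} map $T$ with $T(\sawu X') = \sawu X$: since $\sawu X_{1:t}$ is $\sigma(\sawu X)_t = \sigma(\sawu X_{1:t})$-measurable and $\sigma(\sawu X)_t \subseteq \sigma(\sawu X')_t = \sigma(\sawu X'_{1:t})$, the Doob--Dynkin lemma (\cite[Lemma~1.13]{Ka97}, as used in the proof of Proposition~\ref{prop:sa}) gives a Borel $T_t$ with $\sawu X_{1:t} = T_t(\sawu X'_{1:t})$; assembling these gives the required adapted $T$. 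I expect the main obstacle to be the forward direction — specifically, establishing that the information process of $\fp X^{\mathcal I}$ (or whatever canonical minimal lift one builds over $\mathcal I(\fp X)$) really has generated filtration equal to $\mathcal I(\fp X)$ rather than something strictly smaller, and cleanly arguing the "uniqueness up to adapted isomorphism" of minimal self-aware lifts so that all minimal lifts generate the same filtration. One subtlety worth flagging: minimality is stated only for lifts of the form $(X, \iuc{X'})$ keeping the \emph{same} first coordinate $X$, so all maps in sight preserve the $X$-coordinate, which is consistent but should be used carefully when comparing $\sigma(\sawu X)$ with $\mathcal I(\fp X)$ (to which $X$ — not $\sawu X$ — is required adapted).
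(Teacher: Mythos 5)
Items (i) and (ii) are fine and coincide with the paper's proof, as does the observation in (iii) that all minimal self-aware lifts generate a common filtration. Your reverse direction of (iii) (Doob--Dynkin upgrade of a filtration inclusion to an adapted map) is correct and is actually more explicit than what the paper writes.

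The genuine gap is in the forward direction of (iii), in the inclusion $\sigma(\sawu{X})\subseteq \mathcal{I}(\fp X)$. You try to route this through the process $\fp X^{\mathcal I}$ and the lift $\ip(\fp X^{\mathcal I})$, but two steps you wave past are exactly the nontrivial ones: (a) that $\ip(\fp X^{\mathcal I})$, which is self-aware on the filtration $\mathcal{I}(\fp X)$, is also a self-aware lift of $\fp X$, i.e.\ self-aware relative to the larger filtration $(\F_t)$ --- this needs the self-containedness of $\mathcal{I}(\fp X)$ to transfer the conditional law from $\mathcal{I}(\fp X)_t$ to $\F_t$; and (b) that $\ip(\fp X^{\mathcal I})$ is minimal among self-aware lifts of $\fp X$ (not just of $\fp X^{\mathcal I}$), which is what Proposition~\ref{prop:sa} actually gives. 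Neither is checked, and both are precisely where the content of (iii) lives.

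The paper sidesteps $\fp X^{\mathcal I}$ entirely with a cleaner argument: it shows directly, by backward induction, that $\ip(\fp X)$ (the information process over the \emph{original} filtration) is adapted to any self-contained subfiltration $(\G_t)$ to which $X$ is adapted. The induction step is the key move you are missing: assuming $\ip_t(\fp X)$ is $\G_t$-measurable, self-containedness gives $\F_{t-1}\indep \G_t$ conditionally on $\G_{t-1}$, so
\[
\ip_{t-1}(\fp X)=\bigl(X_{t-1},\,\Law(\ip_t(\fp X)\mid\F_{t-1})\bigr)=\bigl(X_{t-1},\,\Law(\ip_t(\fp X)\mid\G_{t-1})\bigr)
\]
is $\G_{t-1}$-measurable. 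Applied to $\G=\mathcal{I}(\fp X)$ this yields $\sigma(\ip(\fp X))\subseteq\mathcal{I}(\fp X)$, and combined with the first inclusion from (ii) and the common-filtration observation, the forward direction follows for every minimal lift. I would replace your $\fp X^{\mathcal I}$ detour by this direct induction.
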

\begin{proof}
The first assertion is clear.

For the second assertion, observe that the self-awareness condition \eqref{eq:sa} precisely states that $Y$ and $\F_t$ are conditionally independent given $Y_{1:t}$ for every $t$. This is  precisely the definition of $\sigma(Y)$ being self-contained in $(\F_t)_{t=1}^N$.

To prove the third claim, first note that all minimally self-aware lifts generate the same filtration. Indeed, if $\sawu{X}$ and $\sawu{X}'$ are both minimally self-aware lifts there are adapted maps $T, T'$ such that $\sawu{X} = T(\sawu{X}')$ and $\sawu{X}' = T'(\sawu{X})$.  Hence, $\sigma(\sawu{X}) = \sigma(\sawu{X}')$. 

Next, we show that the information process generates the intrinsic filtration. By the second claim, $\sigma(\ip(\fp X))$ is a self-contained subfiltration of $(\F_t)_{t=1}^N$, so $\mathcal{I}(\fp X) \subset \sigma(\ip(\fp X))$. To show the other inclusion, we need to show that $\ip(\fp X)$ is adapted to $\mathcal{I}(\fp X)$. To that end, it suffices to show the following: If a filtration $(\G_t)_{t=1}^N$ is self-contained in $(\F_t)_{t=1}^N$ and $X$ is adapted to $(\G_t)_{t=1}^N$, then also $\ip(\fp X)$ is adapted to $(\G_t)_{t=1}^N$.

We fix such a filtration $(\G_t)_{t=1}^N$ and show by backward induction on $t$ that $\ip_t(\fp X)$ is $\G_t$-measurable. This is clear for $t=N$ because $\ip_N(\fp X) = X_N$. Assume that $\ip_t(\fp X)$ is $\G_t$-measurable. As $(\G_t)_{t=1}^N$ is self-contained, $\F_{t-1}$ and $\G_t$ are conditionally independent given $\G_{t-1}$. Therefore, we find that
\[
\ip_{t-1}(\fp X)= (X_{t-1}, \Law(\ip_t(\fp X) | \F_{t-1})) = (X_{t-1},  \Law(\ip_t(\fp X) | \G_{t-1}))
\]
is  $\G_{t-1}$-measurable.
\end{proof}


We have introduced multiple notions of equivalence for stochastic processes so far. The following theorem states that they are all equivalent.
\begin{theorem}[Equivalence of filtered processes]
  \label{thm:equivalence}
  For $\fp{X}, \fp{Y} \in \FP$, the following are equivalent:
  \begin{enumerate}[label=\emph{(\alph*)}]
    \item \label{prop:equivalencelift} $\fp{X}$ and $\fp{Y}$ admit self-aware lifts which have the same law.
    \item \label{prop:equivalencemarkov} $\fp{X}$ and $\fp{Y}$ admit Markovian lifts which have the same law. 
    \item \label{prop:equivalenceHK} $\fp{X}\simhk\fp{Y}$.
    \item \label{prop:equivalenceAW} $\AW_p (\fp{X}, \fp{Y})=0$. 
  \end{enumerate}  
\end{theorem}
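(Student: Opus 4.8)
The plan is to prove the cycle of implications \ref{prop:equivalencelift} $\Rightarrow$ \ref{prop:equivalencemarkov} $\Rightarrow$ \ref{prop:equivalenceHK} $\Rightarrow$ \ref{prop:equivalenceAW} $\Rightarrow$ \ref{prop:equivalencelift}, leaning on the results already established in the excerpt and on \cite{BaBePa21, HoKe84}. The implication \ref{prop:equivalencelift} $\Rightarrow$ \ref{prop:equivalencemarkov} is essentially Remark~\ref{rem:sa_vs_Markov}: if $\sawu X = (X,\iuc X)$ and $\sawu Y = (Y,\iuc Y)$ are self-aware lifts with $\Law(\sawu X) = \Law(\sawu Y)$, then the associated Markov lifts $\sawu X'_t = (X_t,(X_{1:t},\iuc{X_{1:t}}))$ and $\sawu Y'_t = (Y_t,(Y_{1:t},\iuc{Y_{1:t}}))$ are Markovian lifts (by that remark) and are measurable image-functions of $\sawu X$ resp.\ $\sawu Y$ applied coordinate-wise, so they inherit equality of law. (One should note the trivial converse too, since a Markov process is self-aware, so \ref{prop:equivalencemarkov} $\Rightarrow$ \ref{prop:equivalencelift} is immediate and these two are interchangeable as starting points.)

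For \ref{prop:equivalencemarkov} $\Rightarrow$ \ref{prop:equivalenceHK}, the idea is that a Markovian (hence self-aware) lift carries all the adapted-distribution information of $\fp X$: since $\sawu X$ is self-aware, the filtration $\sigma(\sawu X)$ is self-contained in $(\F_t^{\fp X})_{t=1}^N$ by Proposition~\ref{prop:sa-sc}(ii), and since $X$ is adapted to $\sigma(\sawu X)$, the filtered process on $(\Omega^{\fp X},\F^{\fp X},\P^{\fp X},\sigma(\sawu X), X)$ is $\simhk$-equivalent to $\fp X$ — here I would invoke that passing to a self-contained subfiltration to which $X$ is adapted does not change the Hoover--Keisler class, which is precisely the content behind the intrinsic filtration construction of \cite{Ho92} (alternatively, one argues through the information process: by Proposition~\ref{prop:sa}, $\ip(\fp X)$ is a factor of $\sawu X$ and vice versa up to the minimal lift, so $\Law(\ip(\fp X))$ is determined by $\Law(\sawu X)$). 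Then $\Law(\sawu X) = \Law(\sawu Y)$ forces $\Law(\ip(\fp X)) = \Law(\ip(\fp Y))$, and by the Hoover--Keisler characterization recalled in \cite{BaBePa21} (equality of the laws of iterated prediction processes, equivalently of $\ip_1$), this gives $\fp X \simhk \fp Y$.

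The equivalence \ref{prop:equivalenceHK} $\Leftrightarrow$ \ref{prop:equivalenceAW} is quoted directly from \cite[Theorem~4.11]{BaBePa21} (stated in the excerpt right after Definition~\ref{def:AW}), so nothing new is needed there; alternatively it also follows from Theorem~\ref{thm:isometry}, since $\AW_p(\fp X,\fp Y) = \W_p(\Law(\ip_1(\fp X)),\Law(\ip_1(\fp Y)))$ vanishes iff $\Law(\ip_1(\fp X)) = \Law(\ip_1(\fp Y))$, which is exactly $\fp X\simhk\fp Y$. Finally, to close the loop with \ref{prop:equivalenceHK} $\Rightarrow$ \ref{prop:equivalencelift}: if $\fp X \simhk \fp Y$ then $\Law(\ip_1(\fp X)) = \Law(\ip_1(\fp Y))$ hence $\Law(\ip(\fp X)) = \Law(\ip(\fp Y))$ (the full path law is determined by the law of its first coordinate via the recursive disintegration structure of the canonical space), and $\ip(\fp X), \ip(\fp Y)$ are self-aware lifts of $\fp X, \fp Y$ respectively by Lemma~\ref{lem:ip.self.aware}, which is exactly \ref{prop:equivalencelift}.

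The main obstacle I expect is the careful justification, in \ref{prop:equivalencemarkov} $\Rightarrow$ \ref{prop:equivalenceHK}, that the Hoover--Keisler class depends only on the law of a self-aware (or Markov) lift — that is, that replacing $\fp X$ by its intrinsic-filtration version or by the canonical process built from $\ip(\fp X)$ preserves $\simhk$, and that a self-aware lift $\sawu X$ factors through $\ip(\fp X)$ in a law-determining way. This is really where the content of Proposition~\ref{prop:sa}, Proposition~\ref{prop:sa-sc}, and the recalled properties of the information process from \cite{BaBePa21} have to be combined cleanly; everything else is bookkeeping with equality of laws under measurable maps.
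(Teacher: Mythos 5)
Your overall structure is sound and you correctly identify the crux, but the crucial implication is not actually carried out: it is left as a pointer to ingredients, and both paths you sketch for it have an issue.

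Your primary route for \ref{prop:equivalencemarkov} $\Rightarrow$ \ref{prop:equivalenceHK} appeals to the fact that replacing the filtration of $\fp X$ by a self-contained subfiltration to which $X$ is adapted preserves the Hoover--Keisler class. In the paper this fact appears as Corollary~\ref{cor:sa} -- and it is \emph{derived from} Theorem~\ref{thm:equivalence}, so invoking it here would be circular unless you independently re-prove it from \cite{Ho92}. The paper deliberately does not lean on \cite{Ho92} for this step (it already has to repair a flaw elsewhere in that reference), and establishes the equivalence directly via the information process and the isometry Theorem~\ref{thm:isometry}. Your alternative route (``$\Law(\ip(\fp X))$ is determined by $\Law(\sawu X)$ because $\ip(\fp X)$ is a factor of $\sawu X$ via Proposition~\ref{prop:sa}'') is indeed the paper's approach, but as stated it has a gap: Proposition~\ref{prop:sa} gives an adapted map $f$ with $\ip_t(\fp X) = f_t(\sawu X_{1:t})$ and a separate adapted map $g$ with $\ip_t(\fp Y) = g_t(\sawu Y_{1:t})$, and a priori $f \neq g$, so $\Law(\sawu X) = \Law(\sawu Y)$ does not immediately give $\Law(f(\sawu X)) = \Law(g(\sawu Y))$. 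The paper closes this gap by a backward induction on $t$ showing directly that $\Law(\sawu X, \ip_t(\fp X)) = \Law(\sawu Y, \ip_t(\fp Y))$, using at each step the recursive definition of $\ip$, self-awareness of $\sawu X$, and the fact that the conditional-law expression on the right-hand side depends only on the joint law already established at level $t$. That induction is the content missing from your sketch.

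On a smaller note, the paper organises the equivalences as (a) $\Leftrightarrow$ (b) (via Remark~\ref{rem:sa_vs_Markov}, as you say), (c) $\Leftrightarrow$ (d) (quoted from \cite[Theorem~4.11]{BaBePa21}), and then (a) $\Leftrightarrow$ (d) -- the last being the genuinely new part. Your cycle (a) $\Rightarrow$ (b) $\Rightarrow$ (c) $\Rightarrow$ (d) $\Rightarrow$ (a) is equivalent bookkeeping; the difference is only that the new content sits in (b) $\Rightarrow$ (c) for you and in (a) $\Rightarrow$ (d) for the paper. Your direction (d) $\Rightarrow$ (a), resp.\ (c) $\Rightarrow$ (a), is correct and matches the paper. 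Also note that $\AW_p(\fp X,\fp Y)=0$ $\Leftrightarrow$ $\Law(\ip_1(\fp X))=\Law(\ip_1(\fp Y))$ from Theorem~\ref{thm:isometry} is not by itself an ``alternative'' proof of (c) $\Leftrightarrow$ (d): to go from equality of $\Law(\ip_1)$ to $\simhk$ you still need exactly \cite[Theorem~4.11]{BaBePa21}, so the two routes you offer there are in fact the same.
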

\begin{proof}
The equivalence between (a) and (b) is an immediate consequence of Remark~\ref{rem:sa_vs_Markov}. 

The equivalence between (c) and (d) is shown in \cite[Theorem 4.11]{BaBePa21}.

Therefore, it remains to show that (a) and (d) are equivalent. Assume that (d) is satisfied, i.e.\ $\AW(\fp X, \fp Y) = 0$. According to Lemma~\ref{lem:ip.self.aware}, $\ip(\fp X)$ and $\ip(\fp Y)$ are self-aware lifts of $\fp X$ and $\fp Y$. By Theorem~\ref{thm:isometry} we have $\W_p(\Law(\ip(\fp X)), \Law(\ip(\fp X))) = \AW(\fp X, \fp Y) =0$. Hence, (a) is satisfied as well.

Assume that (a) holds true, i.e.\ that there are  self-aware lifts $\sawu{X}, \sawu{Y}$ of $\fp X$ and $\fp Y$ respectively that satisfy $\Law(\sawu{X}) = \Law(\sawu{Y})$. As $\ip(\fp X)$ and $\ip(\fp Y)$ are minimally self-aware lifts, there exist adapted maps $f=(f_t)_{t=1}^N$ and $g=(g_t)_{t=1}^N$ such that $\ip_t(\fp X) = f_t(\sawu{X}_{1:t})$ and $\ip_t(\fp Y) = g_t(\sawu{Y}_{1:t})$ for every $t = 1,\dots, N$. 

We aim to show by backward induction on $t$ that 
\begin{align}\label{eq:prf:equiv_ind}
    \Law(\sawu{X},\ip_t(\fp{X}))=\Law(\sawu{Y},\ip_t(\fp{Y})). 
\end{align}
For $t=N$ this is clear because $ \ip_N(\fp X) =  X_N$ is a component of $\sawu{X}_N$. 

Assume that  $\Law(\sawu{X},\ip_t(\fp{X}))=\Law(\sawu{Y},\ip_t(\fp{Y}))$. As $\ip_t(\fp X) = f_t(\sawu{X}_t)$, we find using the self-awareness of $\sawu{X}$
\begin{align}
    \begin{split}\label{eq:prf:X}
     (\sawu{X}, \ip_{t-1}(\fp{X})) &= (\sawu{X},  X_{t-1},\Law( f_t(\sawu{X}_t) | \F^{\fp{X}}_{t-1})) = (\sawu{X}, X_{t-1},\Law( f_t(\sawu{X}_t) | \sawu{X}_{1:t-1})) 
     \\&=(\sawu{X}, X_{t-1},\Law(\ip_{t}(\fp{X}) | \sawu{X}_{1:t-1})).  
    \end{split}
\intertext{By the very same argument, we also have }\label{eq:prf:Y}
     (\sawu{Y},\ip_{t-1}(\fp{Y})) &= (\sawu{Y}, Y_{t-1},\Law(\ip_{t}(\fp{Y}) | \sawu{Y}_{1:t-1})).  
\end{align}
As the laws of right hand sides of \eqref{eq:prf:X} and \eqref{eq:prf:Y} only depend on $\Law(\sawu{X},\ip_t(\fp{X}))$ and $\Law(\sawu{Y},\ip_t(\fp{Y}))$, we conclude $\Law(\sawu{X},\ip_{t-1}(\fp{X}))=\Law(\sawu{Y},\ip_{t-1}(\fp{Y}))$. This proves the inductive claim.

Therefore, we conclude $\Law(\ip(\fp{X}))=\Law(\ip(\fp{Y}))$ and Theorem \ref{thm:isometry} implies $\AW_p(\fp{X},\fp{Y})=0$. 
\end{proof}

\begin{corollary}
Let $\fp X \in \FP$ and $\sawu{X}$ be a self-aware lift of $\fp X$. Then $\fp X \simhk (\Omega^{\fp X}, \F^{\fp X}, \P^{\fp X}, \sigma(\sawu{X}), X)$. 
\end{corollary}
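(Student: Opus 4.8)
The plan is to reduce everything to the characterization of Hoover--Keisler equivalence already established in Theorem~\ref{thm:equivalence}, namely that $\fp X \simhk \fp Y$ if and only if $\fp X$ and $\fp Y$ admit self-aware lifts with the same law. Write $\fp{X}'$ for the filtered process $(\Omega^{\fp X}, \F^{\fp X}, \P^{\fp X}, \sigma(\sawu X), X)$. The strategy is to exhibit a common self-aware lift of $\fp X$ and $\fp{X}'$, the natural candidate being $\sawu X$ itself; since $\sawu X$ lives on the same underlying probability space for both processes, the two lifts will literally be the \emph{same} random variable and hence trivially have the same law.

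First I would check that $\sawu X = (X, \iuc X)$ is adapted to $\sigma(\sawu X)$: this is immediate since $\sawu X_t$ is by construction $\sigma(\sawu X_t) = \sigma(\sawu X)_t$-measurable. Next I would verify that $\sawu X$ is a self-aware lift of $\fp{X}'$, i.e.\ that $(\Omega^{\fp X}, \F^{\fp X}, \P^{\fp X}, \sigma(\sawu X), \sawu X)$ is self-aware. But the self-awareness condition \eqref{eq:sa} for this filtered process reads $\Law(\sawu X \mid \sigma(\sawu X)_t) = \Law(\sawu X \mid \sawu X_{1:t})$, and since $\sigma(\sawu X)_t = \sigma(\sawu X_{1:t})$ (the filtration generated by the process $\sawu X$ is generated at time $t$ by $\sawu X_{1:t}$, using that $\sawu X$ is adapted to its own generated filtration and the standard fact that $\sigma(\sawu X)_t = \bigvee_{s \le t}\sigma(\sawu X_s) = \sigma(\sawu X_{1:t})$), both sides are conditioning on the same $\sigma$-algebra, so the identity holds trivially. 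Hence $\sawu X$ is a self-aware lift of $\fp{X}'$. By hypothesis $\sawu X$ is also a self-aware lift of $\fp X$.

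Now $\fp X$ admits the self-aware lift $\sawu X$ and $\fp{X}'$ admits the self-aware lift $\sawu X$; these are the same $\Omega^{\fp X}$-valued random object under the same measure $\P^{\fp X}$, so in particular $\Law(\sawu X) = \Law(\sawu X)$. By the equivalence of (a) and (c) in Theorem~\ref{thm:equivalence}, we conclude $\fp X \simhk \fp{X}'$, which is the claim. The only mildly delicate point — and the one I would state carefully — is the identity $\sigma(\sawu X)_t = \sigma(\sawu X_{1:t})$, i.e.\ that the filtration \emph{generated} by a process coincides at each time with the $\sigma$-algebra generated by the path up to that time; but this is precisely the convention adopted in Subsection~\ref{sec:notation} (``we also write $\sigma(X)$ for the filtration generated by a stochastic process $X=(X_t)_{t=1}^N$, i.e.\ $\sigma(X)_t = \sigma(X_t)$''), so there is essentially no obstacle at all. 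One should perhaps remark that Theorem~\ref{thm:equivalence} is stated for $\fp X, \fp Y \in \FP$ and both $\fp X$ and $\fp X'$ indeed lie in $\FP$ since they share the same adapted process $X$.
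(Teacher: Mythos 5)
Your proof is correct and takes the same approach as the paper: the paper's proof is a one-liner observing that $\sawu X$ is simultaneously a self-aware lift of $\fp X$ and of $(\Omega^{\fp X},\F^{\fp X},\P^{\fp X},\sigma(\sawu X),X)$, which your argument establishes carefully before invoking the equivalence (a)$\Leftrightarrow$(c) of Theorem~\ref{thm:equivalence}. Your only extra care is in verifying the self-awareness of $\sawu X$ with respect to its own generated filtration, which is the right detail to flag but (as you note) is trivial once one reads $\sigma(\sawu X)_t = \sigma(\sawu X_{1:t})$.
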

\begin{proof}
This is an immediate consequence of Theorem~\ref{thm:equivalence} as $\sawu{X}$ is a self-aware lift of both $\fp X$ and $(\Omega^{\fp X}, \F^{\fp X}, \P^{\fp X}, \sigma(\sawu{X}), X)$.
\end{proof}

Theorem~\ref{thm:equivalence} also provides further insights about self-aware processes. In particular, we obtain that there is a simpler canonical representative in the case of self-aware processes, namely the processes of type $\mathbb{S}^\mu$, $\mu \in \PP(\X)$, which were defined in Remark~\ref{rem:embed}.  We call $\mathbb{S}^\mu $ the standard self-aware process with law $\mu$. Using this notions, we have 
\begin{corollary}\label{cor:sa}
The following assertions hold true:
\begin{enumerate}[label=(\roman*)] 
    \item Let $\fp X \in \FP$ and write $\mu =\law(X)$. Then $\fp X$ is self-aware if and only if $\fp X \simhk \mathbb{S}^\mu$. 
    \item Let $\fp X, \fp Y \in \FP$ be self-aware. Then $\fp X \simhk \fp Y$ if and only if $\law(X)=\law(Y)$. 
\end{enumerate}  
\end{corollary}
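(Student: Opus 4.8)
The plan is to deduce both statements directly from Theorem \ref{thm:equivalence} together with the observation (Lemma \ref{lem:ip.self.aware} combined with Corollary \ref{cor:sa}'s predecessor corollary) that a self-aware process is $\simhk$-equivalent to one with canonical filtration. The key point is that $\mathbb{S}^\mu$ is itself a self-aware process: under $\mathbb{S}^\mu$ the filtration is $\sigma(X_{1:t})$, so the self-awareness condition $\Law(X\mid\sigma(X_{1:t}))=\Law(X\mid X_{1:t})$ holds trivially. Hence $\mathbb{S}^\mu$ is its own self-aware lift (taking $\iuc X$ trivial), and its law as a filtered process has $\Law(X)=\mu$.

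For (i), suppose first that $\fp X$ is self-aware. Then $\fp X$ is itself a self-aware lift of $\fp X$ (with trivial information coordinate), and likewise $\mathbb{S}^\mu$ is a self-aware lift of $\mathbb{S}^\mu$; since $\Law(X)=\mu=\Law(X^{\mathbb{S}^\mu})$, these two self-aware lifts have the same law. By the implication \ref{prop:equivalencelift}$\Rightarrow$\ref{prop:equivalenceHK} of Theorem \ref{thm:equivalence}, $\fp X\simhk\mathbb{S}^\mu$. Conversely, if $\fp X\simhk\mathbb{S}^\mu$, then self-awareness is preserved under $\simhk$: indeed, by Theorem \ref{thm:equivalence} \ref{prop:equivalencelift} there is a self-aware lift of $\fp X$ whose law equals that of a self-aware lift of $\mathbb{S}^\mu$, and (using Proposition \ref{prop:sa} / minimality, or more simply the fact that $\ip(\fp X)$ is determined by the law of any self-aware lift) one checks $\ip(\fp X)$ is a function of $X$ alone, which is exactly self-awareness of $\fp X$. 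Alternatively, one can invoke the corollary immediately preceding: $\fp X\simhk\mathbb{S}^\mu$ and $\mathbb{S}^\mu$ has canonical filtration, and self-awareness is precisely the property of being $\simhk$-equivalent to a process with canonical filtration.

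For (ii), let $\fp X,\fp Y$ be self-aware with $\law(X)=\law(Y)=:\mu$. By part (i), $\fp X\simhk\mathbb{S}^\mu\simhk\fp Y$, and transitivity of $\simhk$ gives $\fp X\simhk\fp Y$. Conversely, if $\fp X\simhk\fp Y$ then in particular they have the same law (since $\simhk$ refines equality of laws — e.g.\ because $\AW_p(\fp X,\fp Y)=0$ forces $\Law(X)=\Law(Y)$ via a bicausal coupling concentrated on the diagonal in the first coordinate, or directly from Theorem \ref{thm:isometry}), so $\law(X)=\law(Y)$.

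The main obstacle, such as it is, is the converse direction of (i): showing that $\fp X\simhk\mathbb{S}^\mu$ implies $\fp X$ is self-aware. This is not automatic from the definitions and is cleanest to handle via the preceding corollary, which already identifies self-aware processes (up to $\simhk$) with those carrying a canonical filtration; one then just needs that $\mathbb{S}^\mu$ carries a canonical filtration and that the class of self-aware processes is $\simhk$-closed, the latter following from Theorem \ref{thm:equivalence} \ref{prop:equivalencelift} since a self-aware lift of $\mathbb{S}^\mu$ (namely $\mathbb{S}^\mu$ itself) then serves as a self-aware lift with the correct law on the $\fp X$ side as well.
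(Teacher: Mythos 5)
Your overall route---deducing both parts from Theorem \ref{thm:equivalence}(a) using that $\mathbb{S}^\mu$ is self-aware and that a self-aware $\fp X$ has $X$ itself as a self-aware lift---is exactly the paper's intended approach, and your treatment of the forward direction of (i) and all of (ii) is fine. The gap is in the converse of (i), i.e.\ that $\fp X\simhk\mathbb{S}^\mu$ forces $\fp X$ to be self-aware, which is the one non-trivial assertion in the corollary and the one your proposal does not actually establish.

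Your main argument jumps from ``there are self-aware lifts $\sawu X$ of $\fp X$ and $\sawu Y$ of $\mathbb{S}^\mu$ with the same law'' to ``$\ip(\fp X)$ is a function of $X$ alone, which is exactly self-awareness.'' Both halves of that jump need work. Minimality (Proposition \ref{prop:sa}) only gives $\ip(\fp X)=T(\sawu X)$ with $\sawu X=(X,\iuc X)$, which is a priori \emph{not} a function of $X$ alone; you need the additional observation that the filtration of $\mathbb{S}^\mu$ is $\sigma(\pj_{1:t})$, so $\sawu Y$ is of the form $g(\pj)$ for an adapted Borel $g$ whose first component is the identity, and then the equal-law hypothesis gives $\law(X,\sawu X)=\law(\pj,g(\pj))$, a law concentrated on the graph of $g$, whence $\sawu X=g(X)$ a.s.\ and $\ip(\fp X)=T(g(X))$. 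Moreover ``$\ip(\fp X)$ is a function of $X$'' is not literally the definition of self-awareness; one still has to combine $\sigma(\ip_{1:t}(\fp X))\subseteq\sigma(X_{1:t})\subseteq\F_t$ with the self-awareness of $\ip(\fp X)$ (Lemma \ref{lem:ip.self.aware}) and a tower argument to get $\law(\ip(\fp X)\mid\F_t)=\law(\ip(\fp X)\mid X_{1:t})$, and then push forward by $Z^-$ to recover $\law(X\mid\F_t)=\law(X\mid X_{1:t})$. Your ``alternative'' argument is circular: the corollary immediately preceding (about replacing the filtration by $\sigma(\sawu X)$) does not say that self-awareness is precisely the property of being $\simhk$-equivalent to a process with canonical filtration---that statement \emph{is} part (i) of the present corollary, so it cannot be invoked to prove it.
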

\begin{proof}
This is an easy consequence of Theorem~\ref{thm:equivalence}(a). 
\end{proof}

While it is possible to characterize equivalence of processes with self-aware / Markov lifts, this concept is not fine enough to characterize adapted weak convergence. 
To do this, we need to consider lifts which store information in a `uniform' way. 
\begin{definition}
A process $\fp X$ is called ($1$-)Lipschitz Markov, if for every $t <N$ there exists a $1$-Lipschitz map $f_t : \X_t \to \mathcal{P}(\X_{t+1})$ (where the latter space is equipped with with the $1$-Wasserstein distance w.r.t.\ $d_{t+1}$) such that $\law(X_{t+1}|\F_t)=f_t(X_t)$. 

$\sawu{\fp X}$ is a Lipschitz Markov lift of $\fp X$ if it is Lipschitz Markov and a Markov lift of $\fp X$. 
\end{definition}
From the Kantorovoich--Rubinstein theorem it is straightforward that a process is Lipschitz Markov iff for every $f: \X_{t+1} \to \R$ 1-Lipschitz, there is a $1$-Lipschitz map $g: \X_t \to \R$ such that
$$
\E[f(X_{t+1})|\F_t] = g(X_t).
$$
It is well known that while the property of being Markov is not weakly closed, being Lipschitz Markov is closed.  This is used for instance in \cite{Ke72, HiRoYo14, Lo08b} to construct Markov processes with given marginals. 

Note that the information processes is a Lipschitz Markov lift. To see this, recall from Definition~\ref{def:ip} that $\law(\ip_{t+1}(\fp X) |\F_t^{\fp X}) = \ip_t^+(\fp X)$ and note that the projection $\ip_t(\fp X) = ( \ip_t^-(\fp X),  \ip_t^+(\fp X) ) \mapsto \ip_t^+(\fp X)$ is a contraction. 

    


\begin{proposition}
For $\fp X, \fp X^1, \fp X^2, \ldots \in \FP_p$, the following are equivalent:   
\begin{enumerate}[label=(\roman*)]
    \item $\AW_p(\fp X^n, \fp X) \to 0$.
    \item There exist Lipschitz Markov lifts $\sawu{X}^n, \sawu{X}$ of $\fp X^n $ and $\fp X$ such that $\W_p(\law(\sawu{X}^n), \law(\sawu{X}) )\to 0$. 
\end{enumerate}    
\end{proposition}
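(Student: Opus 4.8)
The plan is to leverage the isometry from Theorem~\ref{thm:isometry} together with the fact that the information process is itself a Lipschitz Markov lift, which reduces the whole statement to a comparison of $\W_p$-convergence on two different spaces of lifts.

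\medskip

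\emph{Proof of (i) $\Rightarrow$ (ii).} This direction is essentially free. Assume $\AW_p(\fp X^n,\fp X)\to 0$. Take $\sawu X^n := \ip(\fp X^n)$ and $\sawu X := \ip(\fp X)$. By Lemma~\ref{lem:ip.self.aware} and Remark~\ref{rem:sa_vs_Markov} these are self-aware lifts and, after the minor modification $\sawu X'_t = (X_t,(X_{1:t},\iuc X_{1:t}))$ recorded in Remark~\ref{rem:sa_vs_Markov}, genuine Markov lifts; as observed in the paragraph preceding the proposition, the information process is Lipschitz Markov, and the modification does not destroy this (the extra recorded coordinates $X_{1:t}$ are Lipschitz functions of the current coordinate in the sense required, since $\sawu X'_{t+1}$ determines the conditional law as a $1$-Lipschitz image). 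Now Theorem~\ref{thm:isometry} gives
\[
\W_p(\law(\ip_1(\fp X^n)),\law(\ip_1(\fp X))) = \AW_p(\fp X^n,\fp X)\to 0,
\]
and since $\ip_1$ determines the whole information process law (the process is self-aware, so $\law(\ip(\fp X))$ is a measurable image of $\law(\ip_1(\fp X))$ via the recursive disintegration, and this image map is continuous in the relevant Wasserstein sense — this is exactly the content behind the isometry), we get $\W_p(\law(\sawu X^n),\law(\sawu X))\to 0$. I would phrase this last point carefully using the canonical-space structure of Definition~\ref{def:canonical.space.Z}.

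\medskip

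\emph{Proof of (ii) $\Rightarrow$ (i).} Suppose $\sawu X^n,\sawu X$ are Lipschitz Markov lifts with $\W_p(\law(\sawu X^n),\law(\sawu X))\to 0$. The key structural fact is that a Lipschitz Markov process is \emph{self-aware} (being Markov implies self-aware, Remark~\ref{rem:sa_vs_Markov}), hence by Theorem~\ref{thm:equivalence}(a) the filtered process $\fpsawu X$ is $\simhk$-equivalent to the standard self-aware process $\mathbb S^{\law(\sawu X)}$, and similarly for $\sawu X^n$. Therefore
\[
\AW_p(\fpsawu X^n,\fpsawu X) = \AW_p(\mathbb S^{\law(\sawu X^n)},\mathbb S^{\law(\sawu X)}) \le \W_p(\law(\sawu X^n),\law(\sawu X))\to 0,
\]
where the inequality is because the diagonal/identity-type coupling is bicausal for standard self-aware processes once we have $\W_p$-closeness — more precisely one uses a Lipschitz-Markov-gluing argument: since the lifts are Lipschitz Markov, an optimal $\W_p$-coupling of the laws $\law(\sawu X^n)$ and $\law(\sawu X)$ can be upgraded to a \emph{bicausal} coupling with the same cost, exploiting that the $1$-Lipschitz Markov kernels $f_t$ allow one to transport conditional laws compatibly; this is the standard mechanism (as cited after the definition, cf.\ \cite{Ke72,HiRoYo14,Lo08b}) by which Lipschitz Markov structure is stable under limits. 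Finally $\AW_p(\fp X^n,\fp X)\le \AW_p(\fp X^n,\fpsawu X^n) + \AW_p(\fpsawu X^n,\fpsawu X) + \AW_p(\fpsawu X,\fp X)$, and the first and last terms vanish because $\sawu X^n,\sawu X$ are \emph{lifts} of $\fp X^n,\fp X$ (a process and its Markov lift are $\simhk$-equivalent, so at $\AW_p$-distance $0$, by Theorem~\ref{thm:equivalence}). Hence $\AW_p(\fp X^n,\fp X)\to 0$.

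\medskip

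\textbf{Main obstacle.} The delicate step is the inequality $\AW_p(\mathbb S^{\mu_n},\mathbb S^{\mu})\le \W_p(\mu_n,\mu)$ for the Lipschitz Markov laws $\mu_n=\law(\sawu X^n)$, $\mu=\law(\sawu X)$ — i.e.\ turning an optimal transport coupling of the path-space laws into a \emph{bicausal} one without increasing the cost. This is false for general laws (a generic $\W_p$-optimal coupling need not be bicausal), so one genuinely must use that both $\mu_n$ and $\mu$ disintegrate through $1$-Lipschitz Markov kernels: I would construct the bicausal coupling step by step in time, at stage $t$ coupling the kernels $f_t^n(\cdot)$ and $f_t(\cdot)$ via their $\W_p$-optimal couplings and invoking $1$-Lipschitzness to control $\W_p(f_t^n(x^n_t),f_t(x_t))$ by $d_t(x^n_t,x_t)$ plus the drift in the kernels themselves, summing the telescoped errors. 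Getting the bookkeeping of this recursive coupling right — and confirming it indeed reproduces (up to $o(1)$) the value $\W_p(\mu_n,\mu)$ — is where the real work lies; everything else is a bookkeeping of $\simhk$-equivalences via Theorem~\ref{thm:equivalence} and the isometry.
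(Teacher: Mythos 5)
Your forward direction $(i)\Rightarrow(ii)$ follows the paper's route (the information process is a $1$-Lipschitz Markov lift, and the isometry of Theorem~\ref{thm:isometry} does the rest). The problem is the reverse direction, where your central claim breaks down.

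You reduce $(ii)\Rightarrow(i)$ to showing $\AW_p(\mathbb{S}^{\mu_n},\mathbb{S}^{\mu})\le\W_p(\mu_n,\mu)$, or at least $\AW_p(\mu_n,\mu)=\W_p(\mu_n,\mu)+o(1)$, for the Lipschitz Markov laws $\mu_n=\law(\sawu X^n)$, $\mu=\law(\sawu X)$, to be achieved by a recursive gluing of the $1$-Lipschitz kernels. This inequality is \emph{false}, even for a single pair of $1$-Lipschitz Markov laws. Take $N=2$, $\mu$ the law of $(X_1,X_2)$ with $X_1\sim\mathrm{Unif}\{0,1\}$ and $X_2=X_1$ (kernel $x\mapsto\delta_x$, $1$-Lipschitz), and $\nu$ the law of $(Y_1,Y_2)$ with $Y_1\equiv 1/2$, $Y_2\sim\mathrm{Unif}\{0,1\}$ independent (kernel is constant, so $0$-Lipschitz). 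Then $\W_1(\mu,\nu)=1/2$ (match $Y_2$ to $X_1$, paying only for the first coordinate). But bicausality forces $X_1\perp Y_2$ (since $Y_1$ is degenerate, $X_1\indep_{Y_1}Y$ means $X_1\indep(Y_1,Y_2)$), so $\E[|X_1-Y_2|]\ge 1/2$ and $\AW_1(\mu,\nu)=1>\W_1(\mu,\nu)$. The $1$-Lipschitz structure therefore does \emph{not} allow one to upgrade a $\W_p$-optimal coupling to a bicausal coupling of comparable cost; the telescoping estimate you sketch bounds $\W_p(f_t^n(x^n_t),f_t(x_t))$ by $d_t(x^n_t,x_t)$ plus a term $\W_p(f_t^n(x_t),f_t(x_t))$ that measures drift between the kernels themselves, and this drift is not controlled by $\W_p(\mu_n,\mu)$ at fixed $n$.

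The paper's proof of $(ii)\Rightarrow(i)$ is a compactness argument, not a gluing: the family of $L$-Lipschitz Markov laws $\{\law(\sawu X^n)\}$ is shown to be relatively compact in $(\mathcal{P}_p(\X),\AW_p)$, via Hellwig's information topology compactness criterion and the equivalence of that topology with $\AW_p$-convergence modulo moments. Since $\AW_p\ge\W_p$ and the sequence converges to $\law(\sawu X)$ in $\W_p$, every $\AW_p$-cluster point must be $\law(\sawu X)$, hence $\AW_p(\law(\sawu X^n),\law(\sawu X))\to 0$. The references you cite ([Ke72,HiRoYo14,Lo08b]) establish weak \emph{closedness} of the Lipschitz Markov class, which is an ingredient in such a compactness argument but does not by itself yield a near-optimal bicausal coupling. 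A secondary issue: your closing triangle inequality uses terms like $\AW_p(\fp X^n,\fpsawu X^n)=0$, but $\fpsawu X^n$ has a process valued in the enlarged space $\X\times\iuc\X$, so this quantity is not well-defined; the paper instead uses the projection inequality $\AW_p(\fp X^n,\fp X)\le\AW_p(\fpsawu X^n,\fpsawu X)$, which is the correct way to pass from lifts back to the original processes.
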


\begin{proof}
First observe that implication from (i) to (ii) is an easy consequence of the isometry \eqref{eq:isometry} and the fact that the information process is 1-Lipschitz Markov.

For the reverse implication, suppose $\fp X^n, \fp X \in \FP_p$ have $L$-Lipschitz Markov lifts  $\sawu{X}^n, \sawu{X}$ such that $\W_p(\law(\sawu{X}^n), \law(\sawu{X}) )\to 0$. The crucial step is  that the collection $\{\law(\sawu{X}_n) : n \in \N \}$ is relatively compact in $(\mathcal{P}_p(\X),\AW_p)$. This follows easily from the compactness criterion for Hellwig's information topology  \cite[Theorem~1.4]{Ed19} and the fact that on the space $\mathcal{P}_p(\X)$ convergence in  $\AW_p$ is equivalent to convergence in Hellwig's information topology plus convergence of $p$-th moments, see \cite[Theorem~1.3]{BaBaBeEd19b}. 

As $\AW_p$ is stronger than $\W_p$, the sequence $(\law(\sawu{X}_n))_n$ is relatively compact in $\AW_p$ and it converges in $\W_p$ to $\law(\sawu{X})$, we also have $\AW_p(\law(\sawu{X}^n), \law(\sawu{X}) )\to 0$. As the processes $\sawu{\fp X}^n, \sawu{\fp X}$ are all self-aware, this entails $\AW_p(\sawu{\fp X}^n, \sawu{\fp X})\to 0$ (see Remark~\ref{rem:embed}). As $\AW_p(\fp X^n,\fp X) \le \AW_p(\sawu{\fp X}^n, \sawu{\fp X})$, this yields the claim.
\end{proof}

\section{Independent Randomization and Extensions}
\label{sec:ir}
Let $X,Y$ be random variables on a probability space $(\Omega,\F,\P)$ and $\widetilde{X}$ be a random variable on a further space $(\widetilde{\Omega},\widetilde{\F},\widetilde{\P})$. Provided that $(\widetilde{\Omega},\widetilde{\F},\widetilde{\P})$ is `large enough' (i.e.\ it can support a further independent uniform random variable; this is usually tacitly assumed in probability theory), there is a random variable $\widetilde{Y}$ on this space such that $(X,Y) \sim (\widetilde{X},\widetilde{Y})$. This classical fact is referred to as \emph{transfer principle}, see e.g.\ \cite[Theorem 5.10]{Ka97}.

In order to establish a suitable transfer principle for filtered processes, we need to make precise what `large enough' means in the context of stochastic processes.\footnote{A first attempt could be to consider processes which just allow for an independent uniformly distributed random variable $U_1$ that is $\F_1$-measurable. However, this approach is too weak to allow for a transfer principle, see Remark~\ref{rem:randomNecessary} below.}

\begin{definition}[Independent randomization]\label{def:ir}
A filtered process  $\fp X$ admits \emph{independent randomization} if there exists a self-aware lift $\sawu{X}$ and for all $t = 1, \dots,N$ a uniform random variable $U_t \in \F_t$ such that 
$
U_t \indep (\sawu{X},\F_{t-1}).
$
In this case we say that $U$ is an independent randomization for the self-aware lift $\sawu{X}$. 
\end{definition}

Note that if $U$ is an independent randomization for a self-aware lift $\sawu{X}$, it is also an independent randomization for every minimal self-aware lift. 

  \begin{lemma}
  \label{lem:randomization}
    Let $\fp{X}$ be a filtered process, $\sawu{X}$ be a self-aware lift of $\fp X$ and $U$ be a independent randomization for $\sawu{X}$.     
    Then  the process $(\sawu{X}_t,U_t)_{t=1}^N$ is again a self-aware lift of $\fp{X}$.
  \end{lemma}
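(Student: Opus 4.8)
The claim is that if $\sawu{X}$ is a self-aware lift of $\fp X$ and $U$ is an independent randomization for $\sawu{X}$, then $\sawu{X}' := (\sawu{X}_t, U_t)_{t=1}^N$ is again a self-aware lift of $\fp X$. First note that $\sawu{X}'$ is adapted to $(\F_t)_{t=1}^N$ since each $U_t \in \F_t$ and $\sawu{X}$ is adapted, and its first coordinate is still $X$, so it is a lift of $\fp X$ in the sense of Definition~\ref{def:sa_lift}. The real content is to verify the self-awareness condition \eqref{eq:sa}: for every $t \le N$,
\[
\Law\big( (\sawu{X},U) \,\big|\, \F_t \big) = \Law\big( (\sawu{X},U) \,\big|\, \sawu{X}_{1:t}, U_{1:t}\big).
\]

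The plan is to prove this by a telescoping / conditioning argument, peeling off the randomization variables $U_{t+1}, \dots, U_N$ one at a time, starting from $U_N$ and working backwards, while keeping $\sawu{X}$ fixed in the conditioning. The key structural input is the independence relation $U_s \indep (\sawu{X}, \F_{s-1})$, which in particular gives $U_s \indep (\sawu{X}, U_{1:s-1}, \F_{s-1})$ — here one uses that $U_{1:s-1}$ is $\F_{s-1}$-measurable, so adjoining it to the $\sigma$-algebra on the right does not break the independence. Concretely, I would first argue that conditionally on $\F_t$ (for $t \le N$), the vector $(U_{t+1}, \dots, U_N)$ is independent of $\sawu{X}$ and each $U_s$ ($s > t$) is independent of $\F_{s-1}$ — i.e. the randomization variables beyond time $t$ remain ``fresh''. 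This lets me write, for a bounded test function,
\[
\E\big[ \varphi(\sawu{X},U) \,\big|\, \F_t \big]
= \int \E\big[ \varphi(\sawu{X}, U_{1:t}, u_{t+1:N}) \,\big|\, \F_t\big]\, \nu(du_{t+1:N}),
\]
where $\nu$ is the (fixed, deterministic) law of $N-t$ independent uniforms; and then $\E[\psi(\sawu{X},U_{1:t}) \mid \F_t] = \E[\psi(\sawu{X},U_{1:t}) \mid \sawu{X}_{1:t}, U_{1:t}]$ follows because $\psi(\sawu{X}, U_{1:t})$ is a function of $\sawu{X}$ times something $\F_t$-measurable, and self-awareness of $\sawu{X}$ gives $\Law(\sawu{X}\mid \F_t) = \Law(\sawu{X}\mid \sawu{X}_{1:t})$, combined with $U_{1:t}$ being $\F_t$-measurable. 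Putting these together yields that $\E[\varphi(\sawu{X},U)\mid\F_t]$ is $\sigma(\sawu{X}_{1:t}, U_{1:t})$-measurable, which is exactly \eqref{eq:sa} for $\sawu{X}'$.

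The main obstacle I anticipate is the bookkeeping in the conditional-independence manipulations: one must be careful that ``$U_s$ independent of $\F_{s-1}$'' is preserved after one has already conditioned on $\F_t$ with $t \le s-1$ and after adjoining the earlier $U$'s, and that the $U_s$ for $s>t$ are jointly (not just pairwise) independent of each other and of $(\sawu{X},\F_t)$ given $\F_t$. This is a standard ``independent complement'' argument but needs to be organized carefully, most cleanly by induction on $s$ from $N$ down to $t+1$, using at each step that $U_s \indep (\sawu{X}, \F_{s-1})$ and that $U_{s+1:N}$ has already been shown independent of $(\sawu{X}, \F_s) \supseteq (\sawu{X}, \F_{s-1}, U_s)$. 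Once this independence structure is in place, the rest is a short computation as sketched above. An alternative, possibly slicker route is to invoke the minimal self-aware lift $\ip(\fp X)$: since $U$ is also an independent randomization for $\ip(\fp X)$ and $\ip(\fp X) = T(\sawu{X})$ for an adapted $T$, it may suffice to prove the statement for $\ip(\fp X)$ and transport it back — but the direct telescoping argument above avoids this detour and I would present that.
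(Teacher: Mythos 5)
Your plan is correct and follows essentially the same route as the paper's proof: both establish that $U_{t+1:N}$ is jointly independent of $(\sawu{X},\F_t)$, combine this with the self-awareness of $\sawu{X}$ and the $\F_t$-measurability of $(\sawu{X}_{1:t},U_{1:t})$, and finish with a product/monotone-class step. The only cosmetic difference is that the paper factors product test functions $f(\sawu{X}_{t+1:N})g(U_{t+1:N})$ directly while you propose integrating out $U_{t+1:N}$ against its fixed law $\nu$ first; you are also slightly more careful in spelling out the inductive argument for the joint (not merely pairwise) independence of $(U_{t+1},\dots,U_N)$ from $(\sawu{X},\F_t)$, which the paper leaves implicit.
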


\begin{proof}
We need to show that for every $t \le N$
\begin{align}\label{eq:prf:extendedSAlift}
\Law(\sawu{X},U | \F_t)=\Law(\sawu{X},U | \sawu{X}_{1:t},U_{1:t} ).
\end{align}
As $\sawu X$ is self-aware and $U_{t+1:N}$ is independent of $\F_t$, we have for every bounded Borel $f :\mathcal{X}_{t+1:N} \to \R$ and $g : \R^{N-t} \to \R$
\begin{align*}
\E[f(\sawu{X}_{t+1:N})&g(U_{t+1:N}) | \F_t]  = 
\E[f(\sawu{X}_{t+1:N}) | \F_t]\E[g(U_{t+1:N})] = 
\E[f(\sawu{X}_{t+1:N}) |\sawu{X}_{1:t}]\E[g(U_{t+1:N})] \\
	&=\E[f(\sawu{X}_{t+1:N})g(U_{t+1:N}) | \sawu{X}_{1:t}]=
\E[f(\sawu{X}_{t+1:N})g(U_{t+1:N}) | \sawu{X}_{1:t},U_{1:t}].
\end{align*}
For every bounded Borel $\overline{f} :\mathcal{X}_{1:t} \to \R$ and $\overline{g} : \R^{t} \to \R$, we can further calculate 
\begin{align*}
	&\E[\overline{f}(\sawu{X}_{1:t})f(\sawu{X}_{t+1:N})\overline{g}(U_{1:t})g(U_{t+1:N}) | \F_t] \\
	&=\overline{f}(\sawu{X}_{1:t})\overline{g}(U_{1:t})\E[f(\sawu{X}_{t+1:N})g(U_{t+1:N}) | \F_t] \\
	&=\overline{f}(\sawu{X}_{1:t})\overline{g}(U_{1:t})\E[f(\sawu{X}_{t+1:N})g(U_{t+1:N}) | \sawu{X}_{1:t},U_{1:t}] \\
	&=\E[\overline{f}(\sawu{X}_{1:t})f(\sawu{X}_{t+1:N})\overline{g}(U_{1:t})g(U_{t+1:N}) | \sawu{X}_{1:t},U_{1:t}],
\end{align*}
which implies \eqref{eq:prf:extendedSAlift} using a monotone class argument.
 \end{proof}
Not every filtered process $\fp X$ allows for independent randomization, yet every filtered process can be extended to a process that does. To make this precise, we first give a formal definition of extension.

\begin{definition}[Extension, \protect{\cite[Definition 0.1]{Ho92}}]
\label{def:extension}
An \emph{extension} of a stochastic basis $(\Omega, \F, \P, (\F_t)_{t=1}^N)$ is a stochastic basis $(\dot\Omega, \F\otimes \H, \Q, (\F_t\otimes \H_t)_{t=1}^N)$, where
\begin{enumerate}[label=(\roman*)]
\item $\dot\Omega=\Omega\times\Xi$ for some set $\Xi$.
\item For each $F\in\F$, $\dot F:=F\times\Xi\in\F\otimes\H$ and $\Q(\dot F)=\P(F)$.
\item $\Q$ is a causal coupling from $(\Omega,\F,\P,(\F_t)_{t=1}^N)$ to $(\Xi,\H,\Q(\Omega\times\cdot), (\H_t)_{t=1}^N)$.
\end{enumerate}
A process $X:\Omega\rightarrow\X$ can be embedded in the extension using induced process  $\dot{X}:\dot{\Omega}\rightarrow \X$, which defined as $\dot{X}(\omega,\xi)=X(\omega)$.
\end{definition}

Note that in \cite[Definition 0.1]{Ho92} the third condition is stated in terms of a conditional independence, namely for every $t\le N$, $\dot\F_N$ and $\F_t \otimes \H_t$ are conditionally independent  given $\dot\F_t$ under $\Q$. By Lemma~\ref{lem:causal_eqiv} this is equivalent to the causality condition as stated in Definition~\ref{def:extension}(3).

A crucial property of extensions is that they preserve Hoover--Keisler-equivalence.
\begin{theorem}[\protect{\cite[Theorem 2.4]{Ho92}}]\label{thm:hovext}
    Let $(\dot\Omega, \F\otimes \H, \Q, (\F_t\otimes \H_t)_{t=1}^N)$ be an extension of the stochastic basis $(\Omega, \F, \P, (\F_t)_{t=1}^N)$ of $\fp{X}$ and let $\dot{X}(\omega,\xi)=X(\omega)$ be the induced process. Then we have
    \[
    \fp{X}\simhk \dot{\fp X} :=  (\dot\Omega, \F\otimes \H, \Q, (\F_t\otimes \H_t)_{t=1}^N,\dot{X}).
    \]
\end{theorem}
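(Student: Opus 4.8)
The plan is to deduce the claim from the characterization of Hoover--Keisler equivalence in Theorem~\ref{thm:equivalence}; concretely, by the equivalence of \ref{prop:equivalencelift} and \ref{prop:equivalenceHK} it suffices to exhibit a self-aware lift of $\fp X$ and a self-aware lift of $\dot{\fp X}$ that have the same law. For $\fp X$ I would take any self-aware lift $\sawu X=(X,\iuc X)$, which exists by Lemma~\ref{lem:ip.self.aware} (the information process is one). For $\dot{\fp X}$ the natural candidate is the pull-back of $\sawu X$ along the coordinate projection $\mathrm{pr}_\Omega\colon\dot\Omega=\Omega\times\Xi\to\Omega$, namely $\dot{\sawu X}:=\sawu X\circ\mathrm{pr}_\Omega=(\dot X,\iuc X\circ\mathrm{pr}_\Omega)$. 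Since each $\iuc{X_t}$ is $\F_t$-measurable, $\iuc{X_t}\circ\mathrm{pr}_\Omega$ is $\dot\F_t:=\F_t\otimes\{\emptyset,\Xi\}$-measurable, hence $\F_t\otimes\H_t$-measurable; thus $\dot{\sawu X}$ is adapted to $(\F_t\otimes\H_t)_{t=1}^N$ with first coordinate $\dot X$, i.e.\ it is a lift of $\dot{\fp X}$. By property~(ii) of an extension $\mathrm{pr}_\Omega$ pushes $\Q$ forward to $\P$, so $\Law_\Q(\dot{\sawu X})=\Law_\P(\sawu X)$, and it only remains to check that $\dot{\sawu X}$ is self-aware on the extended basis, i.e.\ $\Law_\Q(\dot{\sawu X}\mid\F_t\otimes\H_t)=\Law_\Q(\dot{\sawu X}\mid\dot{\sawu X}_{1:t})$ for all $t\le N$.

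I would verify this in two steps. First, collapse the conditioning $\sigma$-algebra: for bounded Borel $g$, the variable $g(\dot{\sawu X})$ is $\dot\F_N=\F_N\otimes\{\emptyset,\Xi\}$-measurable (a function of $\omega$ only); since $\dot\F_N\indep_{\dot\F_t}\F_t\otimes\H_t$ under $\Q$ by property~(iii) of an extension (in the conditional-independence form recorded after Definition~\ref{def:extension}) and $\dot\F_t\subseteq\F_t\otimes\H_t$, conditioning $g(\dot{\sawu X})$ further on $\F_t\otimes\H_t$ reduces to conditioning on $\dot\F_t$ --- this is the content of Lemma~\ref{lem:causal_eqiv} --- giving $\E_\Q[g(\dot{\sawu X})\mid\F_t\otimes\H_t]=\E_\Q[g(\dot{\sawu X})\mid\dot\F_t]$, hence $\Law_\Q(\dot{\sawu X}\mid\F_t\otimes\H_t)=\Law_\Q(\dot{\sawu X}\mid\dot\F_t)$ by passing to regular conditional distributions (the spaces being Polish). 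Second, transfer back to the original space: because $\dot{\sawu X}=\sawu X\circ\mathrm{pr}_\Omega$, $\dot\F_t=\mathrm{pr}_\Omega^{-1}(\F_t)$, and $\mathrm{pr}_\Omega$ pushes $\Q$ to $\P$, the change-of-variables identity for conditional distributions gives $\Law_\Q(\dot{\sawu X}\mid\dot\F_t)=\Law_\P(\sawu X\mid\F_t)\circ\mathrm{pr}_\Omega$ and, similarly, $\Law_\Q(\dot{\sawu X}\mid\dot{\sawu X}_{1:t})=\Law_\P(\sawu X\mid\sawu X_{1:t})\circ\mathrm{pr}_\Omega$. Since $\sawu X$ is self-aware on the original basis, $\Law_\P(\sawu X\mid\F_t)=\Law_\P(\sawu X\mid\sawu X_{1:t})$, and composing with $\mathrm{pr}_\Omega$ closes the argument. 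Thus $\dot{\sawu X}$ is a self-aware lift of $\dot{\fp X}$ with $\Law_\Q(\dot{\sawu X})=\Law_\P(\sawu X)$, and Theorem~\ref{thm:equivalence} yields $\fp X\simhk\dot{\fp X}$.

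An equivalent route would be to prove directly by backward induction that $\ip_t(\dot{\fp X})=\ip_t(\fp X)\circ\mathrm{pr}_\Omega$ holds $\Q$-a.s.\ for all $t$, the inductive step being precisely the two observations above applied to $W=\ip_{t+1}(\fp X)\circ\mathrm{pr}_\Omega$, and then to conclude via the isometry in Theorem~\ref{thm:isometry}. The only genuinely delicate point I anticipate is the first step of the verification --- extracting from the definition of an extension the collapse $\Law_\Q(\dot{\sawu X}\mid\F_t\otimes\H_t)=\Law_\Q(\dot{\sawu X}\mid\dot\F_t)$ --- since this is exactly where the causality built into the notion of extension enters, and it has to be phrased with care because it couples the full $\F_N$-information on the $\Omega$-factor with the partial $\H_t$-information on the $\Xi$-factor. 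Everything else is routine transport of conditional laws along the projection together with an appeal to the already established equivalence theorem.
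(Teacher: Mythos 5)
Your argument is correct and is essentially the paper's proof: both take a self-aware lift $\sawu X$ of $\fp X$, observe that its pull-back $\dot{\sawu X}$ along $\mathrm{pr}_\Omega$ has the same law and is a lift of $\dot{\fp X}$, verify self-awareness on the extension by combining the causality built into Definition~\ref{def:extension} with the self-awareness of $\sawu X$, and conclude via Theorem~\ref{thm:equivalence}. The paper merely compresses your two verification steps into a single chain of conditional-law equalities, whereas you split the collapse of the conditioning $\sigma$-algebra from the transfer along the projection; the content is identical.
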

\begin{proof}
	Let $\sawu{X}$ be a self-aware lift of $\fp{X}$. By Theorem~\ref{thm:equivalence}, it suffices to observe that $\dot{\protect{\sawu{X}}}$ is a self-aware lift of $\dot{\fp X}$ that has the same law as ${\wcheck{X}}$. Indeed, using the self-awareness of $\sawu{X}$ and property (3) of extensions, we find
 \[
 \Law_{\Q}(\dot{\widecheck{X}}\vert\sawu{X}_{1:t})=\Law_{\P}(\sawu{X}\vert\sawu{X}_{1:t})=\Law_{\P}(\sawu{X}\vert\F_{t})=\Law_{\Q}(\dot{\sawu{X}}\vert\F_{t}\otimes\H_t). \qedhere
 \]
\end{proof}

We close this section with an explicit construction that shows that every $\simhk$-class contains a representative that allows for independent randomization. 
 
 \begin{proposition} \label{prop:eqivir}
 For every filtered process  there is an equivalent process that is based on a standard Borel probability space and allows for independent randomization. 
 \end{proposition}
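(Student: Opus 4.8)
The plan is to reduce the claim to the associated canonical filtered process and then enlarge its stochastic basis by tensoring in $N$ independent uniform variables arranged along a staircase filtration. First I would replace $\fp X$ by its associated canonical filtered process $\widehat{\fp X} = (\Z, \F^\Z, \Law(\ip(\fp X)), (\F^\Z_t)_{t=1}^N, Z^-)$ from Definition~\ref{def:CFP.associated.to.FP}: it is built on the Polish, hence standard Borel, space $\Z$ and satisfies $\AW_p(\fp X, \widehat{\fp X}) = 0$, so $\widehat{\fp X} \simhk \fp X$ by Theorem~\ref{thm:equivalence}. It therefore suffices to produce, out of $\widehat{\fp X}$, an equivalent process on a standard Borel space that admits independent randomization.

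To this end I would form the extension $\dot\Omega := \Z \times [0,1]^N$, $\Q := \Law(\ip(\fp X)) \otimes \lambda^N$, equipped with $\dot\F_t := \F^\Z_t \otimes \H_t$, where $\H_t := \sigma\big( (\xi_1,\dots,\xi_N) \mapsto (\xi_1,\dots,\xi_t) \big)$ is the staircase (coordinate) filtration on $[0,1]^N$. Conditions (i) and (ii) of Definition~\ref{def:extension} are immediate, and condition (iii) holds because $\Q$ is a product measure, so independence of the two factors trivially yields the required conditional independences (a product coupling is causal, in fact bicausal). Hence $(\dot\Omega, \F^\Z \otimes \H, \Q, (\dot\F_t)_{t=1}^N)$ is an extension of the stochastic basis of $\widehat{\fp X}$, and Theorem~\ref{thm:hovext} shows that the induced process $\dot{Z}^-(z,\xi) := Z^-(z)$ yields a filtered process $\dot{\widehat{\fp X}} := (\dot\Omega, \F^\Z \otimes \H, \Q, (\dot\F_t)_{t=1}^N, \dot{Z}^-)$ with $\dot{\widehat{\fp X}} \simhk \widehat{\fp X} \simhk \fp X$. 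Being a product of two standard Borel spaces, $(\dot\Omega, \F^\Z \otimes \H)$ is standard Borel.

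It then remains to verify that $\dot{\widehat{\fp X}}$ admits independent randomization. By Lemma~\ref{lem:ip.self.aware} the information process $\ip(\widehat{\fp X})$ is a self-aware lift of $\widehat{\fp X}$, and the computation carried out in the proof of Theorem~\ref{thm:hovext} shows that its induced process $\sawu X$, $\sawu X(z,\xi) := \ip(\widehat{\fp X})(z)$, is a self-aware lift of $\dot{\widehat{\fp X}}$; note that $\sawu X_t$ depends on $z$ alone and is $\F^\Z_t$-measurable. I would then take $U_t : \dot\Omega \to [0,1]$ to be the projection onto the $t$-th coordinate of the $[0,1]^N$-factor. Then $U_t$ is uniformly distributed and $\H_t$-measurable, hence $\dot\F_t$-measurable, and since $\sigma(\sawu X, \dot\F_{t-1}) \subseteq \F^\Z_N \otimes \H_{t-1}$ depends only on $z$ and on $\xi_1,\dots,\xi_{t-1}$, the product form of $\Q$ gives $U_t \indep (\sawu X, \dot\F_{t-1})$. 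Thus $U$ is an independent randomization for $\sawu X$, and $\dot{\widehat{\fp X}}$ is the required process.

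The argument is essentially bookkeeping; the only point that has to be handled with care is the filtration on the added coordinates. One must tensor in $N$ separate uniforms along the staircase filtration $(\H_t)$, rather than a single $\F_1$-measurable uniform, so that $U_t$ is simultaneously available at time $t$ (i.e.\ $\dot\F_t$-measurable) and independent of everything observed strictly before time $t$, the self-aware lift included --- exactly the subtlety flagged in the footnote to Definition~\ref{def:ir} and in Remark~\ref{rem:randomNecessary}. As a sanity check, Lemma~\ref{lem:randomization} then confirms that $(\sawu X_t, U_t)_{t=1}^N$ is again a self-aware lift of $\dot{\widehat{\fp X}}$.
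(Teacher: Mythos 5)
Your proof is correct and follows essentially the same route as the paper: pass to the associated canonical filtered process for a standard Borel base, extend by $[0,1]^N$ with the staircase filtration and product measure, invoke Theorem~\ref{thm:hovext} to preserve the $\simhk$-class, and take the coordinate projections $U_t$ as the independent randomization for the induced self-aware lift. The extra detail you give on why $U_t \indep (\sawu{X}, \dot\F_{t-1})$ follows from the product structure is a fair unpacking of what the paper leaves as ``easy to check.''
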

 \begin{proof}
Let $\fp X \in \FP$. By passing to associated canonical filtered process, cf. Definition~\ref{def:CFP.associated.to.FP}, we can assume that $\fp X$ is  based on a standard Borel probability space. 
 
We set $\Xi=[0,1]^N$, $\dot \Omega = \Omega \times \Xi$, $\Hs=\B([0,1]^N)$, and $\Hs_t=\sigma(\pj_{1:t})$, where $\pj_{1:t}:[0,1]^N\rightarrow[0,1]^t$. Further, let $\Q=\P\otimes\lambda^N$. It is easy to check that $(\dot\Omega, \F\otimes \H, \Q, (\F_t\otimes \H_t)_{t=1}^N)$ is an extension according to Definition~\ref{def:extension}.  Theorem~\ref{thm:hovext} yields $\dot{\fp X} :=  (\dot\Omega, \F\otimes \H, \Q, (\F_t\otimes \H_t)_{t=1}^N,\dot{X}) \simhk \fp X$. 

Next, we observe that $\dot{\fp X} :=  (\dot\Omega, \F\otimes \H, \Q, (\F_t\otimes \H_t)_{t=1}^N,\dot{X})$  allows for independent randomization. Indeed,  $U_t:\Omega\times[0,1]^N\rightarrow[0,1] : (\omega, \xi_1, \dots ,\xi_N)\mapsto \xi_t$, is uniformly distributed and $\F_t\otimes\Hs_t$-measurable. It is easy to check that $U_t$ is independent of $(\F_{t-1}\otimes \H_{t-1}, \ip(\fp{X}))$ for every $t$.  
\end{proof}


\section{Process couplings and the transfer principle}
The intuition behind bicausal couplings is that they allow to consider two filtered processes $\fp X^1, \fp X^2$ as processes on a common stochastic basis. In the first part of this section, we will make this intuition precise. Afterwards, we derive a so-called transfer principle for filtered processes.

\begin{definition}
For $\fp{X^1}, \fp{X^2} \in \FP$ we define the set of respective \emph{process couplings} as those vector-valued process, whose component processes are $\simhk$-equivalent to $\fp{X^1}, \fp{X^2}$, i.e.\ 
\begin{align*}
	\fpcpl(\fp X^1, \fp X^2) := \{(\overline{\Omega}, \overline{\F}, \overline{\P}, (\overline{\F}_t)_{t=1}^N, (\overline{X}{}_t^1, \overline{X}{}^2_t)_{t=1}^N): (\overline{\Omega}, \overline{\F}, \overline{\P}, (\overline{\F}_t)_{t=1}^N, (\overline{X}{}^i_t)_{t=1} ^N)\simhk \fp {X^i} \text{ , } i\in \{1,2\}\}.
		\end{align*}
We write $\fpcpl(\fp X^1, \fp X^2)$ for the collection of all processes couplings. 
\end{definition}
A convenient way to construct process couplings is the following: 
we can consider  two processes $\overline{\fp{X}}{}^1,  \overline{\fp{X}}{}^2$ on the same stochastic basis  as one vector valued process
\[
\overline{\fp{X}}{}^1\marriage \overline{\fp{X}}{}^2 := (\overline{\Omega}, \overline{\F}, \overline{\P}, (\overline{\F}_t)_{t=1}^N, (\overline{X}{}_t^1, \overline{X}{}^2_t)_{t=1}^N).
\]
If $\overline{\fp{X}}{}^1 \simhk \fp X^1$ and $  \overline{\fp{X}}{}^2 \simhk \fp X^2$, we have $\overline{\fp{X}}{}^1\marriage \overline{\fp{X}}{}^2 \in \fpcpl(\fp X^1, \fp X^2)$. 
  
Moreover, every bicausal coupling induces a process coupling:
\begin{lemma}\label{lem:cpltoProcCpl}
For every $\fp X, \fp Y \in \FP$ and every $\pi \in\cplbc(\fp{X},\fp{Y})$ we have 
\[
(\Omega^\fp{X}\times\Omega^\fp{Y},\F^\fp{X}\otimes\F^\fp{Y},\pi,(\F^\fp{X}_t\otimes\F^\fp{Y}_t)_{t=1}^N,(X_t,Y_t)_{t=1}^N) \in\fpcpl(\fp X, \fp Y).
\] 
\end{lemma}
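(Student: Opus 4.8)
The goal is to show that the filtered process built on the product space $\Omega^{\fp X}\times\Omega^{\fp Y}$ with the bicausal coupling $\pi$ and the product filtration lies in $\fpcpl(\fp X, \fp Y)$. By definition of $\fpcpl$, this means checking that the two component processes $(X_t)_{t=1}^N$ and $(Y_t)_{t=1}^N$ (now viewed as the coordinate projections on the product space) are $\simhk$-equivalent to $\fp X$ and $\fp Y$ respectively. By symmetry it suffices to treat the first component.

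So the plan is: set $\overline{\fp X} := (\Omega^\fp{X}\times\Omega^\fp{Y},\F^\fp{X}\otimes\F^\fp{Y},\pi,(\F^\fp{X}_t\otimes\F^\fp{Y}_t)_{t=1}^N,(X_t\circ\pj_1)_{t=1}^N)$ and show $\overline{\fp X}\simhk\fp X$. I would like to realize $\overline{\fp X}$ as an \emph{extension} of $\fp X$ in the sense of Definition~\ref{def:extension}, since Theorem~\ref{thm:hovext} then immediately gives the desired equivalence. Take $\Xi=\Omega^{\fp Y}$, so that $\dot\Omega=\Omega^\fp X\times\Omega^\fp Y=\Omega^{\fp X}\times\Xi$; take $\H=\F^{\fp Y}$, $\H_t=\F^{\fp Y}_t$, and $\Q=\pi$. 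Conditions (i) and (ii) of Definition~\ref{def:extension} are immediate: $\dot\Omega$ has the required product form, and for $F\in\F^{\fp X}$ the set $\dot F=F\times\Omega^{\fp Y}$ lies in $\F^{\fp X}\otimes\F^{\fp Y}$ with $\pi(\dot F)=\P^{\fp X}(F)$ because $\pi$ has first marginal $\P^{\fp X}$.

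The remaining point is condition (iii): $\pi$ must be a causal coupling from the stochastic basis $(\Omega^{\fp X},\F^{\fp X},\P^{\fp X},(\F^{\fp X}_t)_{t=1}^N)$ to $(\Omega^{\fp Y},\F^{\fp Y},\P^{\fp Y},(\F^{\fp Y}_t)_{t=1}^N)$. But this is exactly \emph{one half} of the bicausality of $\pi\in\cplbc(\fp X,\fp Y)$: Definition~\ref{def:causalcoup} says a bicausal coupling is in particular causal from $\fp X$ to $\fp Y$, i.e.\ $\F^{\fp X,\fp Y}_{N,0}$ and $\F^{\fp X,\fp Y}_{0,t}$ are conditionally independent given $\F^{\fp X,\fp Y}_{t,0}$ for every $t$ — which, upon unwinding the $\F_{s,t}$ notation, is precisely the conditional-independence form of causality for the two stochastic bases, matching the remark after Definition~\ref{def:extension} (equivalently condition (3) there via Lemma~\ref{lem:causal_eqiv}). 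Hence $\overline{\fp X}$ is an extension of $\fp X$ with induced process $\dot X(\omega,\xi)=X(\omega)=X_t\circ\pj_1$, and Theorem~\ref{thm:hovext} yields $\overline{\fp X}\simhk\fp X$. Exchanging the roles of the two coordinates and using the causality of $\pi$ from $\fp Y$ to $\fp X$ gives the second component, and therefore the full vector-valued process lies in $\fpcpl(\fp X,\fp Y)$.

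I do not expect a serious obstacle here; the only thing requiring a little care is bookkeeping — matching the index conventions $\F^{\fp X,\fp Y}_{s,t}=\F^{\fp X}_s\otimes\F^{\fp Y}_t$ (with the convention $\F^{\fp X}_0=\{\emptyset,\Omega^{\fp X}\}$) against the product filtration $(\F^{\fp X}_t\otimes\F^{\fp Y}_t)_t$ appearing in the statement, and confirming that ``causal from $\fp X$ to $\fp Y$'' in Definition~\ref{def:causalcoup} is literally the Definition~\ref{def:extension}(iii) condition for the underlying bases. Once that identification is made the proof is a two-line invocation of Theorem~\ref{thm:hovext} applied twice.
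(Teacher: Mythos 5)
Your proof is correct, and it takes a slightly different (and arguably cleaner) route than the paper. The paper's proof of Lemma~\ref{lem:cpltoProcCpl} argues directly: it takes a self-aware lift $\sawu{X}$ of $\fp X$ and verifies, via Lemma~\ref{lem:causal_eqiv}\ref{it:causal CI1}, that $\sawu{X}$ remains a self-aware lift of $\overline{\fp X}$ on the product basis, then invokes Theorem~\ref{thm:equivalence}. You instead recognize that the product basis $(\Omega^\fp{X}\times\Omega^\fp{Y},\F^\fp{X}\otimes\F^\fp{Y},\pi,(\F^\fp{X}_t\otimes\F^\fp{Y}_t)_t)$ is literally an \emph{extension} of the stochastic basis of $\fp X$ in the sense of Definition~\ref{def:extension} -- with $\Xi=\Omega^{\fp Y}$, $\H_t=\F^{\fp Y}_t$, $\Q=\pi$ -- and that condition (iii) of that definition is precisely the ``causal from $\fp X$ to $\fp Y$'' half of bicausality, noting $\pi(\Omega^{\fp X}\times\cdot)=\P^{\fp Y}$. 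You then get the conclusion from Theorem~\ref{thm:hovext} applied twice, once per coordinate. The two arguments are of course closely related -- the paper's proof of Theorem~\ref{thm:hovext} is essentially the same self-aware-lift computation that the paper inlines in Lemma~\ref{lem:cpltoProcCpl} -- but your version factors the work through the already-established extension theorem, avoiding the repetition and making the logical structure more transparent. Both are valid.
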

\begin{proof}
We need to show that $\fp X \simhk \overline{\fp X} := (\Omega^\fp{X}\times\Omega^\fp{Y},\F^\fp{X}\otimes\F^\fp{Y},\pi,(\F^\fp{X}_t\otimes\F^\fp{Y}_t)_{t=1}^N,X)$.  To that end, let $\sawu{X}$ be a self-aware lift of $\fp X$ and note that $\sawu{X}$ regarded as process on $\Omega^\fp{X}\times\Omega^\fp{Y}$ it is also self-aware lift of $\overline{\fp X}$. Indeed, we have for every $t \le N$, by first using the bicausality of $\pi$ in the form of Lemma~\ref{lem:causal_eqiv}\ref{it:causal CI1} and then the self-awareness of $\sawu{X}$ on the basis of $\fp X$
\[
\law_\pi(\sawu{X} | \F_t^{\fp X} \otimes \F_t^{\fp Y} ) = \law_{\P^{\fp X}}(\sawu{X}|\F_t^{\fp X}) = \law_{\P^{\fp X}}(\sawu{X}|\sawu{X}_{1:t}). \qedhere
\]
\end{proof}

We have seen in Remark~\ref{rem:embed} that for self-aware processes both definitions of bicausal couplings (i.e.\ Definition~\ref{def:bc_canonical} and Definition~\ref{def:causalcoup}) are compatible with each other. The notion of process couplings is also compatible with these definitions, as the following lemma details.

\begin{lemma}\label{lem:sabc} 
If  $\fp X, \fp Y \in \FP$ are self-aware, we have  
\begin{align}\label{eq:cplbceq}
\scplbc(\Law(X),\Law(Y))=\{\Law(\overline{X},\overline{Y}): \overline{\fp{X}}\marriage\overline{\fp{Y}}\in\fpcpl(\fp X, \fp Y)\}.
\end{align}
\end{lemma}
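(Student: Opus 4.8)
The claim is a set equality between bicausal couplings of the laws $\Law(X)$, $\Law(Y)$ (in the sense of Definition~\ref{def:bc_canonical}) and the collection of pushforwards $\Law(\overline X,\overline Y)$ arising from process couplings $\overline{\fp X}\marriage\overline{\fp Y}\in\fpcpl(\fp X,\fp Y)$. I would prove the two inclusions separately.

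For ``$\supseteq$'': Take a process coupling $\overline{\fp X}\marriage\overline{\fp Y}=(\overline\Omega,\overline\F,\overline\P,(\overline\F_t)_{t=1}^N,(\overline X_t,\overline Y_t)_{t=1}^N)$ with $\overline{\fp X}\simhk\fp X$, $\overline{\fp Y}\simhk\fp Y$. Since $\fp X$, $\fp Y$ are self-aware, Corollary~\ref{cor:sa}(ii) gives $\Law(\overline X)=\Law(X)$ and $\Law(\overline Y)=\Law(Y)$, so $\Law(\overline X,\overline Y)$ is a coupling of the right marginals. The point is to check bicausality in the sense of Definition~\ref{def:bc_canonical}, i.e.\ under $\Law(\overline X,\overline Y)$, $\overline Y_{1:t}\indep_{\overline X_{1:t}}\overline X$ and symmetrically. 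Because $\overline{\fp X}$ is $\simhk$-equivalent to a self-aware process (namely $\fp X$), $\overline{\fp X}$ is itself self-aware, hence $\sigma(\overline X)$ is a self-contained subfiltration of $(\overline\F_t)$ by Proposition~\ref{prop:sa-sc}(ii); likewise $\sigma(\overline Y)$. Self-containedness of $\sigma(\overline X)$ says $\overline\F_t$ and $\sigma(\overline X_{1:N})$ are conditionally independent given $\sigma(\overline X_{1:t})$; since $\overline Y_{1:t}$ is $\overline\F_t$-measurable (adaptedness) and $\overline X$ is $\sigma(\overline X_{1:N})$-measurable, this yields exactly $\overline Y_{1:t}\indep_{\overline X_{1:t}}\overline X$. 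The symmetric statement follows from self-containedness of $\sigma(\overline Y)$. Thus $\Law(\overline X,\overline Y)\in\scplbc(\Law(X),\Law(Y))$.

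For ``$\subseteq$'': Let $\pi\in\scplbc(\Law(X),\Law(Y))$. Form the standard self-aware processes $\mathbb S^{\Law(X)}$, $\mathbb S^{\Law(Y)}$ from Remark~\ref{rem:embed}; since $\fp X$, $\fp Y$ are self-aware, $\fp X\simhk\mathbb S^{\Law(X)}$ and $\fp Y\simhk\mathbb S^{\Law(Y)}$ by Corollary~\ref{cor:sa}(i). By Remark~\ref{rem:embed}, $\scplbc(\Law(X),\Law(Y))=\cplbc(\mathbb S^{\Law(X)},\mathbb S^{\Law(Y)})$, so $\pi$ is a bicausal coupling of these two filtered processes in the sense of Definition~\ref{def:causalcoup}. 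Apply Lemma~\ref{lem:cpltoProcCpl} to $\pi\in\cplbc(\mathbb S^{\Law(X)},\mathbb S^{\Law(Y)})$: the filtered process $(\X\times\Y,\B_\X\otimes\B_\Y,\pi,(\sigma(X_{1:t})\otimes\sigma(Y_{1:t}))_{t=1}^N,(X_t,Y_t)_{t=1}^N)$ lies in $\fpcpl(\mathbb S^{\Law(X)},\mathbb S^{\Law(Y)})=\fpcpl(\fp X,\fp Y)$, and it is of the form $\overline{\fp X}\marriage\overline{\fp Y}$ with $\overline X$, $\overline Y$ the respective coordinate processes, whose joint law is exactly $\pi$. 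Hence $\pi$ is in the right-hand set.

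\textbf{Main obstacle.} The only slightly delicate point is the ``$\supseteq$'' direction: translating the $\simhk$-equivalence $\overline{\fp X}\simhk\fp X$ into the statement that $\overline{\fp X}$ is self-aware (so that Proposition~\ref{prop:sa-sc}(ii) applies and delivers the conditional-independence form of bicausality matching Definition~\ref{def:bc_canonical}). This uses Corollary~\ref{cor:sa}(i) together with Remark~\ref{rem:sa_vs_Markov}/Theorem~\ref{thm:equivalence}, plus the elementary fact that $\sigma(\overline Y_{1:t})\subseteq\overline\F_t$ and $\sigma(\overline X)=\sigma(\overline X_{1:N})$, after which the conditional independences drop out directly. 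Everything else is bookkeeping with the already-established compatibility statements (Remark~\ref{rem:embed}, Lemma~\ref{lem:cpltoProcCpl}).
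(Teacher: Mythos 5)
Your proof is correct and follows essentially the same path as the paper's: the inclusion $\scplbc\subseteq\{\ldots\}$ is handled by passing to the standard representatives $\mathbb S^{\Law(X)}$, $\mathbb S^{\Law(Y)}$ and invoking Lemma~\ref{lem:cpltoProcCpl}, while the reverse inclusion uses that self-awareness is preserved under $\simhk$ together with the conditional independence encoded by self-containedness of $\sigma(\overline X)$ and $\sigma(\overline Y)$ (the paper writes this out directly via the sandwich $\sigma(\overline X_{1:t})\subset\sigma(\overline X_{1:t},\overline Y_{1:t})\subset\overline\F_t$ rather than citing Proposition~\ref{prop:sa-sc}(ii), but the content is identical).
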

\begin{proof}
Write $\mu := \law(X)$ and $\nu := \law(Y)$. As both sides of \eqref{eq:cplbceq} are independent of the $\simhk$-representative, we can assume w.l.o.g.\ that $\fp X = \mathbb{S}^\mu$ and $\fp Y = \fp{S}^\nu$. First, let $\pi \in \scplbc(\mu,\nu)$ be given. By Lemma~\ref{lem:cpltoProcCpl} we have  
\[
(\X \times \Y, \B(\X \times \Y), \pi, \sigma(X_{1:t},Y_{1:t})_{t=1}^N, (X,Y)) \in \fpcpl(\fp X, \fp Y).
\]

To show the other inclusion in \eqref{eq:cplbceq}, let 
\[
\overline{\fp{X}}\marriage\overline{\fp{Y}} = (\Omega, \F, \P, (\F_t)_{t=1}^N, (\overline{X}, \overline{Y}) ) \in\fpcpl(\fp X, \fp Y)
\]
be given and write $\pi := \law(\overline{X}, \overline{Y})$. We need to show that $\pi \in \scplbc(\mu,\nu)$. Being self-aware is a property of the $\simhk$-class, so the process $\overline{\fp X}$ is self-aware as well, i.e.\ we have $\law_\P(\overline{X} |\F_t ) = \law_\P(\overline{X} | \overline{X}_{1:t} ) $ for every $t \le N$. As $\sigma(\overline{X}_{1:t}) \subset \sigma(\overline{X}_{1:t}, \overline{Y}_{1:t}) \subset \F_t$ this implies $\law_\P(\overline{X} | \overline{X}_{1:t}, \overline{Y}_{1:t}  ) = \law_\P(\overline{X} | \overline{X}_{1:t} )$, so $\pi$ is causal from $\overline{\fp X}$ to $\overline{\fp Y}$. Causality from $\overline{\fp Y}$ to $\overline{\fp X}$ can be proven in the same way. 
\end{proof}

Next, we discuss the transfer principle for filtered processes. Recall that usual transfer principle (see e.g.\ \cite[Theorem~5.10]{Ka97})  states that (under assumptions on the underlying spaces) 
\begin{itemize}
    \item Given random variables $(X,Y)$ on the same probability space and a random variable $\widetilde{X} \sim X$ on a further space, there exists a random variable $\widetilde{Y}$ on the same space as $\widetilde{X}$ such that $(\widetilde{X}, \widetilde{Y}) \sim(X,Y)$. 
\end{itemize}
The natural counterpart of this for filtered processes is:
\begin{itemize}
    \item Given a process coupling $\fp X \marriage \fp Y$ and a further filtered process $\widetilde{ \fp X} \simhk  \fp X$, there exists a filtered process $\widetilde{\fp Y}$ on the same stochastic basis as $\widetilde{\fp X}$ such that $\widetilde{ \fp X} \marriage  \widetilde{ \fp Y}  \simhk \fp X \marriage \fp Y$. 
\end{itemize}
The following theorem makes this statement precise including the assumptions on stochastic bases: 
\begin{theorem}[Transfer principle for filtered processes]\label{thm:transfer}
   Let $(\Omega, \F, \P, (\F_t)_{t=1}^N, (X_t,Y_t)_{t=1}^N)$ be a filtered process. If the filtered process  $\widetilde{\fp{X}}=(\widetilde{\Omega}, \widetilde{\F}, \widetilde{\P}, (\widetilde{\F}_t)_{t=1}^N, (\widetilde{X}_t)_{t=1}^N)$ admits independent randomization and $$\widetilde{\fp{X}}\simhk (\Omega, \F, \P, (\F_t)_{t=1}^N, (X_t)_{t=1}^N),$$ then there exists an adapted process $(\widetilde{Y}_t)_{t=1}^N$ on the stochastic basis of $\widetilde{\fp X}$  such that
    \begin{align*}
         (\Omega, \F, \P, (\F_t)_{t=1}^N, (X_t,Y_t)_{t=1}^N)\simhk(\widetilde{\Omega}, \widetilde{\F}, \widetilde{\P}, (\widetilde{\F}_t)_{t=1}^N, (\widetilde{X}_t,\widetilde{Y}_t)_{t=1}^N).
    \end{align*}
\end{theorem}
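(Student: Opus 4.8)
\medskip
\noindent\emph{Proof strategy.} The statement is the filtered, time‑inductive analogue of the classical transfer principle (which is the case $N=1$): there one disintegrates $\Law(X,Y)$ over $\Law(X)$ and uses a single independent uniform; here one disintegrates, time step by time step, a self‑aware lift of the pair over the information process of the first coordinate, using one fresh uniform per time step. Write $\fp Z:=(\Omega,\F,\P,(\F_t)_{t=1}^N,(X_t,Y_t)_{t=1}^N)$ and $\fp X:=(\Omega,\F,\P,(\F_t)_{t=1}^N,(X_t)_{t=1}^N)$. By Theorem~\ref{thm:equivalence} it suffices to produce, on the basis of $\widetilde{\fp X}$, an adapted process $\widetilde{\sawu X}$ that is a self‑aware lift of $(\widetilde\Omega,\widetilde\F,\widetilde\P,(\widetilde\F_t)_{t=1}^N,(\widetilde X_t,\widetilde Y_t)_{t=1}^N)$ for a suitable $\widetilde Y$ and whose law equals that of a fixed self‑aware lift of $\fp Z$; then these two lifts witness $\fp Z\simhk(\widetilde\Omega,\widetilde\F,\widetilde\P,(\widetilde\F_t)_{t=1}^N,(\widetilde X,\widetilde Y))$. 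For the fixed lift I would take $\sawu X:=\ip(\fp Z)$: by Lemma~\ref{lem:ip.self.aware} it is self‑aware, and since $\ip_t^-(\fp Z)=(X_t,Y_t)$, regrouping its coordinates exhibits it as a self‑aware lift of $\fp X$ with information coordinate recording $Y_t$ and $\ip_t^+(\fp Z)$, with values in the Polish canonical spaces attached to $\fp Z$. Since $\ip(\fp X)$ is a \emph{minimal} self‑aware lift (Proposition~\ref{prop:sa}) there is an adapted map $\psi=(\psi_t)$ with $\ip_t(\fp X)=\psi_t(\sawu X_{1:t})$; as the $\X_t$‑component of $\ip_t(\fp X)$ is $X_t$, which is the leading coordinate of $\sawu X_t$, this encodes a Borel identity valid $\Law(\sawu X_{1:t})$‑a.s. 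On the side of $\widetilde{\fp X}$, independent randomization provides (by the remark after Definition~\ref{def:ir}) uniform variables $\widetilde U_t\in\widetilde\F_t$ with $\widetilde U_t\indep(\ip(\widetilde{\fp X}),\widetilde\F_{t-1})$; chaining, $\widetilde U_t,\dots,\widetilde U_N$ are mutually independent and jointly independent of $(\ip(\widetilde{\fp X}),\widetilde\F_{t-1})$. Finally $\widetilde{\fp X}\simhk\fp X$ gives $\Law(\ip(\widetilde{\fp X}))=\Law(\ip(\fp X))$ by Theorem~\ref{thm:isometry}.

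\medskip
\noindent\emph{Construction and transport of the law.} Using that the $\sawu X_t$ take values in Polish spaces, fix Borel maps $F_t(\cdot,\cdot,u)$ whose push‑forward of Lebesgue measure in $u$ realizes the regular conditional kernel $\kappa_t:=\Law(\sawu X_t\,|\,\sawu X_{1:t-1},\ip_t(\fp X))$, and set recursively $\widetilde{\sawu X}_t:=F_t(\widetilde{\sawu X}_{1:t-1},\ip_t(\widetilde{\fp X}),\widetilde U_t)$; since $\ip_t(\widetilde{\fp X})$ and $\widetilde U_t$ are $\widetilde\F_t$‑measurable, $\widetilde{\sawu X}$ is adapted. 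The core is the induction
\[
\Law\big(\widetilde{\sawu X}_{1:t},\ip_{1:t}(\widetilde{\fp X})\big)=\Law\big(\sawu X_{1:t},\ip_{1:t}(\fp X)\big),\qquad t=1,\dots,N,
\]
whose step splits in two. First, the conditional law of the newly revealed value $\ip_t(\widetilde{\fp X})$ given the past equals $\ip_{t-1}^+(\widetilde{\fp X})$, a Borel function of $\ip_{t-1}(\widetilde{\fp X})$: indeed $\widetilde{\sawu X}_{1:t-1}$ is $\widetilde\F_{t-1}$‑measurable and $\Law(\ip_t(\widetilde{\fp X})\,|\,\widetilde\F_{t-1})=\ip_{t-1}^+(\widetilde{\fp X})$ by definition of $\ip$; the identical statement holds on the original side because $\sawu X$ is adapted to $(\F_t)$, so this half transfers by the inductive hypothesis. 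Second, since $\widetilde U_t\indep(\widetilde{\sawu X}_{1:t-1},\ip(\widetilde{\fp X}))$, appending $\widetilde{\sawu X}_t=F_t(\dots,\widetilde U_t)$ composes with the kernel $\kappa_t$, matching the original side's disintegration. After $N$ steps $\Law(\widetilde{\sawu X},\ip(\widetilde{\fp X}))=\Law(\sawu X,\ip(\fp X))$; in particular $\Law(\widetilde{\sawu X})=\Law(\sawu X)=\Law(\ip(\fp Z))$, and transferring the Borel identity from the first paragraph yields $\psi_t(\widetilde{\sawu X}_{1:t})=\ip_t(\widetilde{\fp X})$ a.s., hence the leading coordinate of $\widetilde{\sawu X}_t$ is the given $\widetilde X_t$. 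Declaring $\widetilde Y_t$ to be the $\Y_t$‑block of $\widetilde{\sawu X}_t$ defines the desired adapted process.

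\medskip
\noindent\emph{Main obstacle: self‑awareness of $\widetilde{\sawu X}$.} It remains to verify $\Law(\widetilde{\sawu X}\,|\,\widetilde\F_t)=\Law(\widetilde{\sawu X}\,|\,\widetilde{\sawu X}_{1:t})$, i.e.\ that conditioning on all of $\widetilde\F_t$ is no better than on $\widetilde{\sawu X}_{1:t}$; I expect this to be the delicate point. The plan is: peeling off $\widetilde{\sawu X}_s$ for $s>t$ one at a time via the maps $F_s$ shows $\widetilde{\sawu X}$ is a deterministic function of $(\widetilde{\sawu X}_{1:t},\ip_{t+1:N}(\widetilde{\fp X}),\widetilde U_{t+1:N})$; given $\widetilde\F_t$, the pair $(\ip_{t+1:N}(\widetilde{\fp X}),\widetilde U_{t+1:N})$ has law $\Gamma_t(\ip_t(\widetilde{\fp X}))\otimes\lambda^{N-t}$, where $\Gamma_t$ is a universal Borel map coming from the ``self‑describing'' identity $\Law(\ip_{t+1:N}(\fp W)\,|\,\F^{\fp W}_t)=\Gamma_t(\ip_t(\fp W))$, valid for every filtered process $\fp W$ (proved by backward induction from $\Law(\ip_{t+1}(\fp W)\,|\,\F^{\fp W}_t)=\ip_t^+(\fp W)$ and the fact that $\ip_s^+(\fp W)$ is a coordinate of $\ip_s(\fp W)$), together with the mutual independence of $\widetilde U_{t+1},\dots,\widetilde U_N$ from $(\widetilde\F_t,\ip(\widetilde{\fp X}))$. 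Consequently $\Law(\widetilde{\sawu X}\,|\,\widetilde\F_t)$ depends on $\widetilde\F_t$ only through $\ip_t(\widetilde{\fp X})=\psi_t(\widetilde{\sawu X}_{1:t})$, hence only through $\widetilde{\sawu X}_{1:t}$, and the claim follows by the tower property. Granting self‑awareness, $\widetilde{\sawu X}$ and $\sawu X=\ip(\fp Z)$ are self‑aware lifts of $(\widetilde\Omega,\widetilde\F,\widetilde\P,(\widetilde\F_t)_{t=1}^N,(\widetilde X,\widetilde Y))$ and of $\fp Z$ with the same law, so Theorem~\ref{thm:equivalence} finishes the proof. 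The per‑step construction is precisely where the full strength of independent randomization (a fresh uniform at each time, independent of the lift and of the past) is used — an $\F_1$‑measurable uniform alone would not keep $\widetilde{\sawu X}$ adapted, consistent with Remark~\ref{rem:randomNecessary}.
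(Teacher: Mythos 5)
Your proof is correct but takes a genuinely different route from the paper's. The paper's argument leans on Proposition~\ref{prop:biadaptedtm} (the biadapted‑map representation of bicausal couplings from \cite{BePaSc21c}): it forms $\pi=\Law(A,B)\in\scplbc(\mu,\nu)$ via Lemma~\ref{lem:sabc}, pulls out a biadapted bijection $T$, and sets the transferred process to be a component of $T(\widetilde A,\widetilde U)$; self‑awareness of the result is then essentially for free, because $T$ being biadapted gives $\sigma(\widetilde A,\widetilde U)=\sigma(T(\widetilde A,\widetilde U))$ and one can pull the self‑awareness of $(\widetilde A,\widetilde U)$ (Lemma~\ref{lem:randomization}) through $T$. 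You instead unroll the construction by hand: you realize the adapted disintegration $\kappa_t=\Law(\sawu X_t\,|\,\sawu X_{1:t-1},\ip_t(\fp X))$, with $\sawu X=\ip(\fp X\marriage\fp Y)$, by Borel functions of uniforms, and feed in $\ip(\widetilde{\fp X})$ together with one fresh uniform per time step. This avoids the external Monge‑type result and also makes the paper's initial WLOG reduction (that the $(\Omega,\F,\P,(\F_t))$ side admits independent randomization, needed there only to match $\Law(B,V)$ with $\nu\otimes\lambda^N$) unnecessary. The price is that self‑awareness of $\widetilde{\sawu X}$ no longer comes for free and has to be argued directly; your mechanism — that $\Law(\ip_{t+1:N}(\fp W)\,|\,\F_t^{\fp W})=\Gamma_t(\ip_t(\fp W))$ for a universal Borel $\Gamma_t$, combined with the independence of $\widetilde U_{t+1:N}$ from $(\widetilde\F_t,\ip(\widetilde{\fp X}))$, so that $\Law(\widetilde{\sawu X}\,|\,\widetilde\F_t)$ factors through $\psi_t(\widetilde{\sawu X}_{1:t})=\ip_t(\widetilde{\fp X})$ — is sound and does the job. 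The remaining steps (the $\Law$‑matching induction, recovering $\widetilde X_t$ from the Borel identity $\psi_t(\sawu X_{1:t})=\ip_t(\fp X)$, and concluding via Theorem~\ref{thm:equivalence}) are the same in spirit as the paper's. In short: yours is a more elementary and self‑contained construction; the paper's is shorter because it offloads the adaptedness/self‑awareness bookkeeping to Proposition~\ref{prop:biadaptedtm}.
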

Before we start we the proof, note that the `Adjunction Theorem' \cite[Theorem 3.3]{Ho92} is essentially equivalent to our transfer principle. However, it is derived as a consequence of the `Amalgamation Theorem' \cite[Theorem 3.2]{Ho92}, which is wrong, even in the case of $N=1$ time-step  (see Appendix~\ref{sec:app} for a counterexample). Therefore, we give a proof of Theorem~\ref{thm:transfer} that is independent from the techniques in \cite{Ho92}. To that end, we need use the following result which is a representation of bicausal couplings by so-called biadapted maps on enlarged spaces. To that end, recall that a map $T:\X \times [0,1]^N \to\Y \times [0,1]^N$ is called adapted if it is of the form
\[
T(x,u)=(T_1(x_1,u_1), T_2(x_{1:2},u_{1:2}),\dots,T_N(x_{1:N},u_{1:N})). 
\]
A biadapted map is a bijection $T$ such that both $T$ and its inverse $T^{-1}$ are adapted. 

\begin{proposition}[\protect{\cite[Theorem 3.6]{BePaSc21c}}]\label{prop:biadaptedtm}
Let $\mu \in \mathcal{P}(\mathcal X)$ and  $\nu \in \mathcal{P}(\mathcal Y)$. If $\pi \in \scplbc(\mu,\nu)$, there exists a biadapted mapping  $T : \mathcal X \times [0,1]^N \to \mathcal{Y} \times [0,1]^N$ satisfying
\begin{enumerate}
	\item $T_\ast (\mu \otimes \lambda^N) =\nu \otimes \lambda^N$, or equivalently, $\widehat{\pi}:= (\id,T)_\ast(\mu \otimes \lambda^N) \in \scplbc(\mu \otimes \lambda^N,\nu \otimes \lambda^N)$,
	\item ${\pj_{\mathcal X\times \mathcal Y}}_\ast\widehat{\pi}=\pi$.
\end{enumerate}
\end{proposition}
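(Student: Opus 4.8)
The plan is to build $T$ by induction on the number of time steps $N$, adding one coordinate at a time in a Knothe--Rosenblatt fashion while carrying extra uniform randomness forward in both directions. The engine is a one-step randomisation lemma: \emph{given Polish spaces $E,F$, measures $\alpha\in\mathcal P(E)$, $\beta\in\mathcal P(F)$ and a coupling $\gamma\in\scpl(\alpha,\beta)$, there is a Borel isomorphism (mod nullsets) $S\colon E\times[0,1]\to F\times[0,1]$ with $S_\ast(\alpha\otimes\lambda)=\beta\otimes\lambda$ and $(\pj_E,\pj_F\circ S)_\ast(\alpha\otimes\lambda)=\gamma$; moreover $S$ may be chosen to depend measurably on any additional parameter on which $(\alpha,\beta,\gamma)$ depends measurably.} For $N=1$ the proposition is exactly this lemma, since $\scplbc(\mu,\nu)=\scpl(\mu,\nu)$ and adaptedness is vacuous.

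\emph{Proof of the one-step lemma.} Disintegrate $\gamma(de,df)=\alpha(de)\,\gamma_e(df)$. It suffices to produce, measurably in $e$, a measure isomorphism $S(e,\cdot)\colon([0,1],\lambda)\to(F\times[0,1],\gamma_e\otimes\lambda)$: then $S_\ast(\alpha\otimes\lambda)=\int\alpha(de)\,\gamma_e\otimes\lambda=\beta\otimes\lambda$, and for $\alpha$-a.e.\ $e$ the map $\pj_F\circ S(e,\cdot)$ pushes $\lambda$ to $\gamma_e$, which gives the coupling statement. Such an isomorphism exists because $([0,1],\lambda)$ and $(F\times[0,1],\gamma_e\otimes\lambda)$ are both atomless standard Borel probability spaces; a version measurable in $e$ (and in any further measurable parameter entering $\gamma_e$) is obtained from the usual quantile/CDF construction for $\gamma_e$ --- realising atoms of $\gamma_e$ by linear maps of intervals and the continuous part via a fixed measurable space-filling identification of $([0,1],\lambda)$ with $([0,1]^2,\lambda^2)$, all of which are jointly measurable in the data.

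\emph{Inductive step.} Assume the statement for $N-1$ and let $\pi\in\scplbc(\mu,\nu)$ on $\mathcal X_{1:N}\times\mathcal Y_{1:N}$. Write $\mu=\mu^{N-1}\otimes\mu_N(\cdot\mid x_{1:N-1})$, $\nu=\nu^{N-1}\otimes\nu_N(\cdot\mid y_{1:N-1})$ and $\pi=\pi^{N-1}(dx_{1:N-1},dy_{1:N-1})\otimes\pi_N(dx_N,dy_N\mid x_{1:N-1},y_{1:N-1})$. By the standard characterisation of bicausal couplings through their successive disintegrations (cf.\ Lemma~\ref{lem:causal_eqiv}), $\pi^{N-1}\in\scplbc(\mu^{N-1},\nu^{N-1})$ and, for a.e.\ $(x_{1:N-1},y_{1:N-1})$, $\pi_N(\cdot\mid x_{1:N-1},y_{1:N-1})\in\scpl(\mu_N(\cdot\mid x_{1:N-1}),\nu_N(\cdot\mid y_{1:N-1}))$. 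The induction hypothesis yields a biadapted $T^{N-1}\colon\mathcal X_{1:N-1}\times[0,1]^{N-1}\to\mathcal Y_{1:N-1}\times[0,1]^{N-1}$ for $\pi^{N-1}$, and the parametrised one-step lemma (with parameter $(x_{1:N-1},y_{1:N-1})$) gives a measurable family of isomorphisms $S_{x_{1:N-1},y_{1:N-1}}\colon\mathcal X_N\times[0,1]\to\mathcal Y_N\times[0,1]$ realising $\pi_N(\cdot\mid x_{1:N-1},y_{1:N-1})$. Define $T$ by: apply $T^{N-1}$ to $(x_{1:N-1},u_{1:N-1})$ to get $(y_{1:N-1},v_{1:N-1})$; recover $x_{1:N-1}$, hence the parameter $(x_{1:N-1},y_{1:N-1})$, from $(y_{1:N-1},v_{1:N-1})$ via $(T^{N-1})^{-1}$; apply $S_{x_{1:N-1},y_{1:N-1}}$ to $(x_N,u_N)$ to get $(y_N,v_N)$; output $(y_{1:N},v_{1:N})$. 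Then $T$ is adapted because each output coordinate at time $t$ is a function of the inputs up to $t$, and $T^{-1}$ is adapted because $(y_{1:N-1},v_{1:N-1})$ already determines $(x_{1:N-1},u_{1:N-1})$ and the parameter, so $S$ can be inverted. Tracking laws coordinate by coordinate --- using that $u_N$ is independent of everything built from $(x_{1:N-1},u_{1:N-1})$, and that $S_{x_{1:N-1},y_{1:N-1}}$ pushes $\mu_N(\cdot\mid x_{1:N-1})\otimes\lambda$ to $\nu_N(\cdot\mid y_{1:N-1})\otimes\lambda$ while realising $\pi_N$ --- shows $(x_{1:N},y_{1:N})\sim\pi$, $y_{1:N}\sim\nu$, and $v_{1:N}\sim\lambda^N$ independent of $y_{1:N}$; that is, $T_\ast(\mu\otimes\lambda^N)=\nu\otimes\lambda^N$ and $(\pj_{\mathcal X\times\mathcal Y})_\ast(\id,T)_\ast(\mu\otimes\lambda^N)=\pi$. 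The equivalent form $\widehat\pi\in\scplbc(\mu\otimes\lambda^N,\nu\otimes\lambda^N)$ is automatic, since the graph of a biadapted map between those two spaces is a bicausal coupling. (As usual one works modulo nullsets, passing to Borel conull sets on which the maps involved are genuine bijections.)

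\emph{Main obstacle.} The delicate point is the parametrised one-step lemma --- exhibiting a measure isomorphism $([0,1],\lambda)\to(F\times[0,1],\gamma_e\otimes\lambda)$ that is measurable in $e$ and the auxiliary parameter while simultaneously realising the prescribed coupling $\gamma_e$ and retaining an output uniform coordinate. Everything else --- the disintegration characterisation of bicausality, the adaptedness of $T$ and $T^{-1}$, and the law bookkeeping in the inductive step --- is routine.
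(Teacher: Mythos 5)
The paper does not prove this proposition but cites it as \cite[Theorem~3.6]{BePaSc21c}, so I review your argument on its own merits. The inductive architecture --- disintegrating $\pi$ in Knothe--Rosenblatt fashion, using bicausality to obtain that $\pi^{N-1}\in\scplbc(\mu^{N-1},\nu^{N-1})$ and that the one-step conditional $\pi_N(\cdot\mid x_{1:N-1},y_{1:N-1})$ couples the correct conditionals, and letting the already-built $(T^{N-1})^{-1}$ reconstruct the conditioning parameter so that $T^{-1}$ is adapted --- is sound. The gap is in the one-step lemma. You claim that producing, measurably in $e$, a measure isomorphism $S(e,\cdot)\colon([0,1],\lambda)\to(F\times[0,1],\gamma_e\otimes\lambda)$ suffices, and define $S(e,u):=S(e,\cdot)(u)$. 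This $S$ is measure-preserving and realises the coupling, but in general it is \emph{not} a bijection mod nullsets, because it discards $e$. Concretely, take $E=F=[0,1]$, $\alpha=\beta=\lambda$, $\gamma=\lambda\otimes\lambda$; then $\gamma_e=\lambda$ for every $e$, and the quantile/space-filling recipe yields a single $\phi\colon([0,1],\lambda)\to([0,1]^2,\lambda^2)$ with $S(e,u)=\phi(u)$ independent of $e$. By Fubini, any conull Borel $A\subseteq[0,1]^2$ meets almost every vertical line $\{(e,u):e\in[0,1]\}$ in a set of full measure, so $A$ contains distinct $(e,u),(e',u)$ with $S(e,u)=S(e',u)$; hence no conull restriction of $S$ is injective. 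Since biadaptedness requires $T$ to be a bijection --- and the paper uses this, e.g.\ to conclude $\sigma(\widetilde{A},\widetilde{U})=\sigma(T(\widetilde{A},\widetilde{U}))$ in the proof of Theorem~\ref{thm:transfer} --- this gap already kills the base case $N=1$.

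The missing idea is to route through the intermediate space $(E\times F\times[0,1],\gamma\otimes\lambda)$. Build $\Phi\colon(E\times[0,1],\alpha\otimes\lambda)\to(E\times F\times[0,1],\gamma\otimes\lambda)$ of the form $\Phi(e,u)=(e,\phi_e(u))$ with each $\phi_e\colon([0,1],\lambda)\to(F\times[0,1],\gamma_e\otimes\lambda)$ an isomorphism, chosen measurably in $e$ --- this is exactly the family you already construct, and $\Phi$ is a genuine isomorphism precisely because it \emph{records} $e$ in its output. Symmetrically, using the disintegration $\gamma=\beta\otimes\bar\gamma_f$ over $F$, build $\Psi\colon(F\times[0,1],\beta\otimes\lambda)\to(E\times F\times[0,1],\gamma\otimes\lambda)$ with $\Psi(f,v)=(\psi_f^E(v),f,\psi_f^{[0,1]}(v))$, where $\psi_f=(\psi_f^E,\psi_f^{[0,1]})\colon([0,1],\lambda)\to(E\times[0,1],\bar\gamma_f\otimes\lambda)$ is an isomorphism measurable in $f$. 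Set $S:=\Psi^{-1}\circ\Phi$. Then $S$ is a Borel isomorphism mod nullsets, $S_\ast(\alpha\otimes\lambda)=\beta\otimes\lambda$, and its $(E,F)$-marginal is $\gamma$ because the intermediate coordinate pair $(e,f)$ has law $\gamma$. All three maps depend measurably on any additional parameter, so with this repair the base case and your inductive step go through as written.
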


Using this result, we can prove the transfer princple:
\begin{proof}[Proof of Theorem~\ref{thm:transfer}]
We assume without loss of generality that $(\Omega, \F, \P, (\F_t)_{t=1}^N)$ admits independent randomization, cf.\ Proposition~\ref{prop:eqivir}.

First note that there exist minimal self-aware lifts $A = (X, \iuc{A})$ of $\fp X$ and  $\widetilde{A} = (\widetilde{X}, \iuc{\widetilde A})$ of $\widetilde{\fp{X}}$ such that $\Law(A) = \Law(\widetilde{A})$, e.g.\ take the respective information processes. Further let $B = (X,Y, \iuc{B})$ be a self-aware lift of $\fp X \marriage \fp Y$. Denote the path-space of the lifts $A$ and $\widetilde A$ by $\A = \mathcal X \times \iuc{\A}$ and the path-space of the lift $B$ by $\B = \mathcal X \times \mathcal{Y} \times \iuc{\B}$. Moreover, write $\mu := \Law(A)$ and $ \nu := \Law(B)$. By Lemma~\ref{lem:sabc} we have $\pi := \Law(A,B) \in \scplbc(\mu,\nu)$. 

By Proposition~\ref{prop:biadaptedtm} there exists a biadapted map 
\[
T : \mathcal A \times [0,1]^N \to \mathcal{B} \times [0,1]^N
\]
such that 
\begin{align}\label{eq:prf:transf}
\widehat{\pi}:= (\id,T)_\ast(\mu \otimes \lambda^N) \in \scplbc(\mu \otimes \lambda^N,\nu \otimes \lambda^N) \quad  \text{and} \quad {\pj_{\mathcal A\times \mathcal B}}_\ast\widehat{\pi}=\pi.
\end{align}

Let $(\widetilde{U}_t)_{t=1}^N$ be an independent randomization for the self-aware lift $\widetilde{A}$. Then $T(\widetilde{A},\widetilde{U})$ is a self-aware process on the basis $(\widetilde{\Omega}, \widetilde{\F}, \widetilde{\P}, (\widetilde{\F}_t)_{t=1}^N)$. Indeed, $(\widetilde{A}_t, \widetilde{U}_t)_{t=1}^N$ is self-aware by Lemma~\ref{lem:randomization} and $\sigma(\widetilde{A},\widetilde{U}) = \sigma(T(\widetilde{A},\widetilde{U}))$ because $T$ is biadapted, hence
\[
\law(T(\widetilde{A},\widetilde{U}) | \widetilde{\F}_t) = T_\ast\law(\widetilde{A},\widetilde{U} | \widetilde{\F}_t) = T_\ast\law(\widetilde{A},\widetilde{U} | (\widetilde{A},\widetilde{U})_{1:t}  ) = \law(T(\widetilde{A},\widetilde{U}) | T(\widetilde{A},\widetilde{U})_{1:t}  ).
\]
Further note that $
T(\widetilde{A},\widetilde{U}) 
$ 
has paths in $\B \times [0,1]^N = \mathcal X \times \mathcal{Y} \times \iuc{\B} \times [0,1]^N$. We denote its respective components in this product space as 
$$
(\widetilde{X}', \widetilde{Y}, \iuc{\widetilde{B}}, \widetilde{V}) := T(\widetilde{A},\widetilde{U}), 
$$
more precisely, we set $ \widetilde{X}' := \pj_{\X} \circ T \circ (\widetilde{A}, \widetilde{U}) : \widetilde{\Omega} \to \X $ etc. As $\law(\widetilde{A}, \widetilde{U}) = \mu \otimes \lambda^N$, it follows from \eqref{eq:prf:transf} that
\[
\law_{\widetilde{\P}}(\widetilde{X}, \iuc{\widetilde A},  \widetilde{X}', \widetilde{Y}, \iuc{\widetilde{B}} ) = \pi = \law_\P(X, \iuc{A}, X, Y, \iuc{B}).
\]
Hence,  $\widetilde{X} = \widetilde{X}'$ $\widetilde{\P}$-a.s. Moreover,  $T(\widetilde{A}, \widetilde{U})$ is a self-aware lift of $(\widetilde{\Omega}, \widetilde{\F}, \widetilde{\P}, (\widetilde{\F}_t)_{t=1}^N, (\widetilde{X}_t,\widetilde{Y}_t)_{t=1}^N)$ and by \eqref{eq:prf:transf} we have $\law( T(\widetilde{A}, \widetilde{U}) ) = \nu \otimes \lambda^N$. If $V$ is an independent randomization for the self-aware lift $B$, then by Lemma~\ref{lem:randomization} the process $(B,V)$ is a self-aware lift of $(\Omega, \F, \P, (\F_t)_{t=1}^N, (X_t,Y_t)_{t=1}^N)$ that satsifes $\law(B,V)=\nu \otimes \lambda^N$. Hence, Theorem~\ref{thm:equivalence} implies that 
\[
(\Omega, \F, \P, (\F_t)_{t=1}^N, (X_t,Y_t)_{t=1}^N)\simhk(\widetilde{\Omega}, \widetilde{\F}, \widetilde{\P}, (\widetilde{\F}_t)_{t=1}^N, (\widetilde{X}_t,\widetilde{Y}_t)_{t=1}^N). \qedhere
\]
\end{proof}
\begin{remark}\label{rem:randomNecessary}
  Note that the assumption that $\widetilde{\fp X}$ satisfies independent randomization is necessary for the transfer principle to hold true. Suppose that the conclusion of the transfer principle holds true for a filtered process $\widetilde{\fp X}$. Let $\fp X \simhk \widetilde{\fp X}$ be a filtered process and $(Y_t)_{t=1}^N = (U_t)_{t=1}^N$ an independent randomization for $\fp X$. Then the process $(\widetilde{Y}_t)_{t=1}^N$ provided by Theorem~\ref{thm:transfer} is an independent randomization for $\widetilde{\fp X}$.  

  In particular, to obtain the transfer principle, it is not sufficient to postulate the existence of an independent uniform random variable at time $t=1$.
\end{remark}

\begin{remark} 
As already stressed above, an important reason to introduce adapted weak topologies and adapted Wasserstein distances has been to build a framework for  stochastic multistage optimization problems, see \cite{Al81, PfPi12, AcBaZa20, BaBaBeEd19a} among others. To this end, it desireable that once we identify processes $\fp X, \fp Y$, then also optimization problem written on these processes are  `equivalent' in the sense that they lead to the same value and that strategies can be translated from one setup to the other. 
An obstacle to this is that filtrations that carry the same information about the processes may still allow for different sets of candidates. 
Specifically if $\fp X$ allows for independent randomization while  $\fp Y$ carries only the intrinsic filtration, one obtains significantly different sets of strategies for optimization problems. 

One way to deal with this problem is to constrain admissible strategies w.r.t.\ the intrinsic filtration. Depending on the situation this may be undesirable, e.g.\ because randomization is  required to obtain compactness of the set of admissible strategies or because the natural optimizers are mixed strategies.

A perhaps preferable solution is to allow for general strategies, but to restrict to 
representatives  $\fp X\simhk \fp Y$ which allow for independent randomization. Then the transfer principle guarantees that strategies can indeed be transferred from one basis to the other, yielding equivalence of the respective optimization problems.   

Of course, for many problems (e.g.\ optimal stopping) it is not necessary to consider mixed strategies and it makes no difference which approach is chosen. 
\end{remark}

\section{Adapted Wasserstein distance}
The aim of this section is to give a representation of the adapted Wasserstein distance in terms of process couplings. 
	\begin{theorem}\label{thm:AWreform}
        For every $\fp X, \fp Y \in \FP_p$ we have 
		\begin{align}\label{eq:AWdefnew}
			\AW_p^p (\fp{X}, \fp{Y})= \inf_{\overline{\fp{X}}\marriage \overline{\fp{Y}}\in\fpcpl(\fp X, \fp Y) }  \E\left[ d(\overline{X},\overline{Y})^p\right].
		\end{align}
	The infimum on the right hand side is attained.

    If we further assume that $(\Omega^{\fp X}, \F^{\fp X})$ is standard Borel and $\fp X$ admits independent randomization, there is a process $(\overline{Y}_t)_{t=1}^N$ on the stochastic basis of $\fp X $ such that 
	\begin{align}\label{eq:sub}
		\E[d(X,\overline{Y})^p]^{\frac{1}{p}}=\AW_p(\fp{X}, \fp{Y})~\text{and}~\fp{Y}\simhk (\Omega, \F, \P, (\F_t)_{t=1}^N, \overline{Y} ) .
	\end{align}
\end{theorem}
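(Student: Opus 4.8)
The plan is to establish the identity \eqref{eq:AWdefnew} together with the attainment first, and then to read off \eqref{eq:sub} from it using the transfer principle (Theorem~\ref{thm:transfer}). For ``$\leq$'' in \eqref{eq:AWdefnew} I would take an arbitrary process coupling $\overline{\fp X}\marriage\overline{\fp Y}\in\fpcpl(\fp X,\fp Y)$, carried by a common stochastic basis $(\overline{\Omega},\overline{\F},\overline{\P},(\overline{\F}_t)_{t=1}^N)$, and observe that the diagonal coupling $\pi:=(\id,\id)_\ast\overline{\P}$ on $\overline{\Omega}\times\overline{\Omega}$ is bicausal: modulo $\pi$-nullsets the $\sigma$-fields $\F^{\overline{\fp X},\overline{\fp Y}}_{t,t}$, $\F^{\overline{\fp X},\overline{\fp Y}}_{t,0}$ and $\F^{\overline{\fp X},\overline{\fp Y}}_{0,t}$ all collapse to the diagonal copy of $\overline{\F}_t$, so the criterion of Lemma~\ref{lem:causal_eqiv}\,\ref{it:causal CI1} is satisfied trivially with each of $\overline{\fp X},\overline{\fp Y}$ in the role of the first process. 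Since $\AW_p$ is $\simhk$-invariant (Theorem~\ref{thm:equivalence}) this gives $\AW_p^p(\fp X,\fp Y)=\AW_p^p(\overline{\fp X},\overline{\fp Y})\leq\E_{\overline{\P}}[d(\overline{X},\overline{Y})^p]$, and taking the infimum yields ``$\leq$''. For ``$\geq$'' and the attainment I would pass to the canonical representatives $\widehat{\fp X}\simhk\fp X$ and $\widehat{\fp Y}\simhk\fp Y$ (Definition~\ref{def:CFP.associated.to.FP}), which are built on standard Borel bases; by the remark following Theorem~\ref{thm:isometry} there is an optimal $\pi^\ast\in\cplbc(\widehat{\fp X},\widehat{\fp Y})$, whose cost equals $\AW_p^p(\widehat{\fp X},\widehat{\fp Y})=\AW_p^p(\fp X,\fp Y)$ by Theorem~\ref{thm:isometry}. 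Plugging $\pi^\ast$ into Lemma~\ref{lem:cpltoProcCpl} produces an element of $\fpcpl(\widehat{\fp X},\widehat{\fp Y})=\fpcpl(\fp X,\fp Y)$ realizing the cost $\AW_p^p(\fp X,\fp Y)$, so the infimum in \eqref{eq:AWdefnew} is attained and equal to $\AW_p^p(\fp X,\fp Y)$.

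To obtain \eqref{eq:sub}, assume now $(\Omega^{\fp X},\F^{\fp X})$ is standard Borel and $\fp X$ admits independent randomization, and let $(\overline{\Omega},\overline{\F},\overline{\P},(\overline{\F}_t)_{t=1}^N,(\overline{X},\overline{W}))$ be an optimiser of \eqref{eq:AWdefnew} from the previous step, so $\E_{\overline{\P}}[d(\overline{X},\overline{W})^p]=\AW_p^p(\fp X,\fp Y)$ while $\overline{\fp X}:=(\overline{\Omega},\overline{\F},\overline{\P},(\overline{\F}_t)_{t=1}^N,\overline{X})\simhk\fp X$ and $\overline{\fp W}:=(\overline{\Omega},\overline{\F},\overline{\P},(\overline{\F}_t)_{t=1}^N,\overline{W})\simhk\fp Y$. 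Since $\fp X$ is standard Borel, admits independent randomization, and $\fp X\simhk\overline{\fp X}$, the transfer principle (Theorem~\ref{thm:transfer}), applied to the process coupling $(\overline{\Omega},\overline{\F},\overline{\P},(\overline{\F}_t)_{t=1}^N,(\overline{X},\overline{W}))$ and to $\widetilde{\fp X}:=\fp X$, furnishes an adapted process $\overline{Y}$ on the stochastic basis of $\fp X$ with
\[
(\Omega^{\fp X},\F^{\fp X},\P^{\fp X},(\F^{\fp X}_t)_{t=1}^N,(X,\overline{Y}))\ \simhk\ (\overline{\Omega},\overline{\F},\overline{\P},(\overline{\F}_t)_{t=1}^N,(\overline{X},\overline{W})).
\]
As $\simhk$-equivalent processes have in particular the same law, this yields $\E_{\P^{\fp X}}[d(X,\overline{Y})^p]=\E_{\overline{\P}}[d(\overline{X},\overline{W})^p]=\AW_p^p(\fp X,\fp Y)$, the first assertion in \eqref{eq:sub}. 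For the second assertion I would use that $\simhk$-equivalence descends along coordinate projections of process couplings: if $(A,B,\iuc C)$ is a self-aware lift of a process coupling $\fp A\marriage\fp B$, then $(B,(A,\iuc C))$ is a self-aware lift of $\fp B$, so by Theorem~\ref{thm:equivalence} equality of the laws of suitable such lifts of two process couplings forces their second component processes to be $\simhk$-equivalent; applied to the display above this gives $(\Omega^{\fp X},\F^{\fp X},\P^{\fp X},(\F^{\fp X}_t)_{t=1}^N,\overline{Y})\simhk\overline{\fp W}\simhk\fp Y$.

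I expect the main difficulty to be organisational rather than analytic: lining up precisely the hypotheses of the transfer principle (independent randomization and standard Borel base of the target $\fp X$, together with $\fp X\simhk$ the first marginal of the optimal process coupling) and verifying the small ``descent'' lemma for $\simhk$ along coordinate projections. The only genuine computation is the verification that the diagonal coupling is bicausal, which is immediate from Lemma~\ref{lem:causal_eqiv}.
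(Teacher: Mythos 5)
Your proposal is correct and follows essentially the same route as the paper: the diagonal coupling gives ``$\leq$'', an optimal bicausal coupling (obtained after passing to standard Borel representatives, via Lemma~\ref{lem:cpltoProcCpl}) gives ``$\geq$'' with attainment, and Theorem~\ref{thm:transfer} yields \eqref{eq:sub}. The paper reaches the existence of an optimal $\pi\in\cplbc$ by invoking Proposition~\ref{prop:eqivir} at the outset rather than passing to $\widehat{\fp X},\widehat{\fp Y}$, and it leaves implicit both the bicausality of the diagonal coupling and the coordinate-projection ``descent'' of $\simhk$ that you spell out, but these are the same ideas in the same order.
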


\begin{proof}
First notice that both sides of \eqref{eq:AWdefnew} are invariant under passing to $\simhk$-equivalent processes, so we can assume w.l.o.g.\ that $\fp X$ and $\fp Y$ admit independent randomization. 

We first show that the left hand side in \eqref{eq:AWdefnew} is smaller than the right hand side. Indeed, whenever
$
\overline{\fp{X}} \marriage \overline{\fp{Y}}  = (\Omega,\F,\P, (\F_t)_{t=1}^N, (\overline{X}, \overline{Y})) \in\fpcpl(\fp X, \fp Y)
$
is a process coupling, we have $\fp X \simhk \overline{\fp X}$,  $\fp Y \simhk \overline{\fp Y}$ and  $(\id,\id)_\ast\P \in \scplbc(\overline{\fp X}, \overline{\fp Y})$. Hence, 
\[
\AW_p^p(\fp X, \fp Y ) = \AW_p^p(\overline{\fp X}, \overline{\fp Y}) \le \E_{(\id,\id)_\ast\P }[d(\overline{X}, \overline{Y})^p] = \E_\P[d(\overline{X}, \overline{Y})^p].
\]
To see the converse inequality, let $\pi \in \cplbc(\fp X, \fp Y)$ be an optimizer for $\AW(\fp X, \fp Y)$. Let $\overline{\fp{X}}\marriage \overline{\fp{Y}}$ be the process coupling induced by $\pi$ as in Lemma~\ref{lem:cpltoProcCpl}. Under this process coupling $\overline{\fp{X}}\marriage \overline{\fp{Y}}$ we have 
\[
\AW_p^p (\fp{X}, \fp{Y})=  \E\left[ d(\overline{X},\overline{Y})^p\right].
\]
Hence, we have equality in \eqref{eq:AWdefnew} and the infimum is attained by $\overline{\fp{X}}\marriage \overline{\fp{Y}}$. The second claim follows by applying Theorem~\ref{thm:transfer} to the process $\fp X$ and the process coupling $\overline{\fp{X}}\marriage \overline{\fp{Y}}$.
\end{proof}

\begin{remark} Different stability results of multistage optimization problems with respect to adapted Wasserstein distance  are established in  \cite{PfPi12, AcBaZa20, BaBaBeEd19a} among others. Typically the corresponding arguments use bicausal transport plans to transfer a candidate optimizers (e.g.\ a stopping time) from one space to another and to use bicausality properties to show that  admissible candidates is obtained in this way. The above representation of the adapted Wasserstein distance allows for a different view on this. 

Specifically, let us prove Lipschitz continuity of optimal stopping (see e.g.\ \cite{AcBaZa20, BaBaBeEd19b}), i.e.\ that if $f_t:\X_t\to \R $ is $L$-Lipschitz for all $t\leq N$, then the map
$$\fp{X} \to \sup\{ \E[f_\tau(X_1, \ldots, X_\tau)] : \tau \text{ st.~t.}\}$$
is $L$-Lipschitz w.r.t.\ $\AW_1$. 

Indeed, 
by Theorem \ref{thm:AWreform} filtered processes $\fp X, \fp Y$ can be assumed to be supported on the same stochastic basis and to satisfy $\AW_1(\fp X, \fp Y)= \E[d(X,Y)]$. Hence
\begin{multline*}
\big|\sup\{ \E[f_\tau(X_1, \ldots, X_\tau)] :  \tau \text{ st.~t.}\}- \sup\{ \E[f_\tau(Y_1, \ldots, Y_\tau)] : \tau \text{ st.~t.}\} \big| \\
\leq \sup\{ \E[|f_\tau(X_1, \ldots, X_\tau) - f_\tau(Y_1, \ldots, Y_\tau)|] : \tau \text{ st.~t.}\} \leq L\AW_1(\fp X, \fp Y).
\end{multline*}
The same type of argument can be used to establish other known stability results w.r.t.\ adapted Wasserstein distance. 
\end{remark}

A further application of Theorem~\ref{thm:AWreform} is the following easy representation of $\AW_p$-geodesics.

\begin{proposition} \label{prop:geodesic}
     Assume that $\X_t=\R^d$. Then $(\FFP_p,\AW_p)$ is a geodesic space. Moreover, the geodesic joining $\fp X$ and $\fp Y$ can be represented in the following way: Whenever 
     \[
     (\overline{\Omega}, \overline{\F}, \overline{\P}, (\overline{\F}_t)_{t=1}^N, (\overline{X}_t, \overline{Y}_t)_{t=1}^N)\in\fpcpl(\fp X, \fp Y)
     \]
     is a minimizer of \eqref{eq:AWdefnew}, a geodesic is given by the collection of processes
     \[
     \fp X^{\lambda} =  (\overline{\Omega}, \overline{\F}, \overline{\P}, (\overline{\F}_t)_{t=1}^N, ( (1-\lambda)\overline{X}_t + \lambda \overline{Y}_t)_{t=1}^N), \quad \lambda \in [0,1].
     \] 
 \end{proposition}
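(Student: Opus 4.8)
The plan is to verify by hand that, for a minimizing process coupling
\[
(\overline{\Omega}, \overline{\F}, \overline{\P}, (\overline{\F}_t)_{t=1}^N, (\overline{X}_t, \overline{Y}_t)_{t=1}^N)\in\fpcpl(\fp X,\fp Y)
\]
of \eqref{eq:AWdefnew} --- which exists by Theorem~\ref{thm:AWreform} --- the affine interpolation $\lambda\mapsto\fp X^\lambda$ is a constant-speed geodesic in $(\FFP_p,\AW_p)$. Writing $X^\lambda_t:=(1-\lambda)\overline X_t+\lambda\overline Y_t$, note that $\overline X,\overline Y\in L^p$ and $\X_t=\R^d$ force $\fp X^\lambda\in\FP_p$, while $\fp X^0=\overline{\fp X}\simhk\fp X$ and $\fp X^1=\overline{\fp Y}\simhk\fp Y$, so the curve joins (the classes of) $\fp X$ and $\fp Y$. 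It then remains to show $\AW_p(\fp X^\lambda,\fp X^{\lambda'})=|\lambda'-\lambda|\,\AW_p(\fp X,\fp Y)$ for all $\lambda,\lambda'\in[0,1]$; since Theorem~\ref{thm:AWreform} provides a minimizer for every pair of endpoints, this simultaneously establishes that $(\FFP_p,\AW_p)$ is a geodesic space.

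For the upper bound, fix $0\le\lambda\le\lambda'\le1$. The processes $\fp X^\lambda$ and $\fp X^{\lambda'}$ are built on the common stochastic basis $(\overline{\Omega},\overline{\F},\overline{\P},(\overline{\F}_t)_{t=1}^N)$, so (exactly as in the proof of Theorem~\ref{thm:AWreform}) the diagonal coupling $(\id,\id)_\ast\overline{\P}$ lies in $\cplbc(\fp X^\lambda,\fp X^{\lambda'})$. Hence
\[
\AW_p^p(\fp X^\lambda,\fp X^{\lambda'})\le\E_{\overline{\P}}\big[d(X^\lambda,X^{\lambda'})^p\big].
\]
Since $X^\lambda_t-X^{\lambda'}_t=(\lambda'-\lambda)(\overline X_t-\overline Y_t)$ for every $t$ and each $d_t$ is induced by a norm on $\R^d$, homogeneity gives $d(X^\lambda,X^{\lambda'})^p=|\lambda'-\lambda|^p\,d(\overline X,\overline Y)^p$, and optimality of the process coupling yields $\E_{\overline{\P}}[d(\overline X,\overline Y)^p]=\AW_p^p(\fp X,\fp Y)$. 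Combining, $\AW_p(\fp X^\lambda,\fp X^{\lambda'})\le(\lambda'-\lambda)\,\AW_p(\fp X,\fp Y)$.

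For the matching lower bound, use the triangle inequality on $(\FFP_p,\AW_p)$ together with the upper bounds just proved for the pairs $(0,\lambda)$ and $(\lambda',1)$:
\[
\AW_p(\fp X,\fp Y)=\AW_p(\fp X^0,\fp X^1)\le\lambda\,\AW_p(\fp X,\fp Y)+\AW_p(\fp X^\lambda,\fp X^{\lambda'})+(1-\lambda')\,\AW_p(\fp X,\fp Y),
\]
whence $\AW_p(\fp X^\lambda,\fp X^{\lambda'})\ge(\lambda'-\lambda)\,\AW_p(\fp X,\fp Y)$. Together with the upper bound this is the claimed equality, so $\lambda\mapsto\fp X^\lambda$ is a constant-speed geodesic and $(\FFP_p,\AW_p)$ is geodesic.

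There is no serious obstacle here; the two points requiring care are that the diagonal coupling of two processes sharing a stochastic basis is bicausal (already observed in the proof of Theorem~\ref{thm:AWreform}) and that the convex-combination construction of $\fp X^\lambda$ relies on the linear structure of $\X_t=\R^d$, which is precisely the standing hypothesis. An alternative route would be to transport the known geodesic structure of $(\Z_1,\W_p)$ through the isometry of Theorem~\ref{thm:isometry}, but the direct argument above has the advantage of exhibiting the explicit geodesic asserted in the statement.
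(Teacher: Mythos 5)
Your proof is correct and follows essentially the same route as the paper: evaluate the diagonal coupling of $\fp X^{\lambda_1}$ and $\fp X^{\lambda_2}$ on the common stochastic basis and use homogeneity of the norm on $\R^d$. You are in fact slightly more careful than the paper, which writes the cost of the diagonal coupling directly as an equality with $\AW_p^p(\fp X^{\lambda_1},\fp X^{\lambda_2})$ when a priori only $\leq$ holds; your triangle-inequality step for the lower bound and your explicit checks of the endpoints and of $\fp X^\lambda\in\FP_p$ supply the details the paper leaves implicit.
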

 \begin{proof}
 In order to show that $(\fp X^\lambda)_{\lambda \in [0,1]}$ is a geodesic, we need to check that 
 \[
 \AW_p(\fp{X}^{\lambda_1},\fp{X}^{\lambda_2})= |\lambda_1-\lambda_2| \AW_p(\fp{X},\fp{Y})
 \]
for all $\lambda_1, \lambda_2\in [0,1]$. Indeed, we have
  \begin{align*}
     \AW_p^p(\fp{X}^{\lambda_1},\fp{X}^{\lambda_2})&= \E_{\overline{\P}}\left[\sum_{t=1}^N\|((1-\lambda_1)\overline{X}_t +  \lambda_1 \overline{Y}_t)-((1-\lambda_2) \overline{X}_t +  \lambda_2 \overline{Y}_t)\|^p\right]\\
     &= |\lambda_1-\lambda_2|^p\E_{\overline{\P}}\left[ \sum_{t=1}^N\|\overline{X}_t - \overline{Y}_t\|^p\right]\\
     &=|\lambda_1-\lambda_2|^p \AW_p^p(\fp{X},\fp{Y}). \qedhere
 \end{align*}
 \end{proof}
Note that the statement of Proposition \ref{prop:geodesic} holds more generally for processes with values in an arbitrary geodesic space.


\section{Skorokhod Representation  for filtered processes}	
The aim of this section is to prove Skorohod-type results for adapted weak convergence, i.e.\ given a sequence $(\fp X^n)_n$ of filtered process converging to $\fp X$ in the adapted weak topology / adapted Wasserstein distance, there are $\simhk$-equivalent processes on the same stochastic base that converge almost surely / in $L_p$. 

Theorem~\ref{thm:AWreform} readily implies the following first Skorohod result for $\AW_p$ and $L_p$-convergence.

\begin{theorem}[$L_p$-Skorokhod representation]
	\label{thm:lpskorokhod}
	For $\fp X, \fp X^1, \fp X^2, \ldots \in \FP_p$ the following are equivalent:
	\begin{enumerate}[label=\emph{(\alph*)}]
		\item $\AW_p(\fp X^n, \fp X) \to 0$.
		\item There exist filtered processes $\overline{\fp X}\simhk\fp X$, $\overline{\fp  X}{}^n\simhk {\fp  X}^n$ on a common stochastic basis such that $\E[d(\overline{X}, \overline{ X}{}^n)^p]^\frac{1}{p}\to 0$.
	\end{enumerate}
	Moreover, the processes $\overline{\fp X}, \overline{\fp  X}{}^n $ in (b) can be chosen such that for every $n \in \N$
 \begin{align}\label{eq:Skorohod_Isometry} 
     \AW_p( {\fp X}, {\fp  X}^n)=\E[d(\overline{X}, \overline{ X}{}^n)^p]^\frac{1}{p}.
 \end{align}
\end{theorem}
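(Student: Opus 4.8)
The plan is to deduce Theorem~\ref{thm:lpskorokhod} from Theorem~\ref{thm:AWreform} together with the transfer principle (Theorem~\ref{thm:transfer}). The implication (b)$\Rightarrow$(a) is the easy direction: if $\overline{\fp X}\simhk\fp X$ and $\overline{\fp X}{}^n\simhk\fp X^n$ live on a common stochastic basis, then the identity coupling is bicausal between $\overline{\fp X}$ and $\overline{\fp X}{}^n$, so $\AW_p^p(\fp X,\fp X^n)=\AW_p^p(\overline{\fp X},\overline{\fp X}{}^n)\le \E[d(\overline X,\overline X{}^n)^p]\to 0$. This also shows the ``$\le$'' half of \eqref{eq:Skorohod_Isometry} automatically, so for the moreover-part only ``$\ge$'' needs to be arranged, which will come out of the construction.

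For (a)$\Rightarrow$(b) with the isometry \eqref{eq:Skorohod_Isometry}, the idea is to build everything on a single fixed stochastic basis that is large enough to carry independent randomization. First I would replace $\fp X$ by a $\simhk$-representative $\overline{\fp X}$ that is based on a standard Borel space and admits independent randomization; this exists by Proposition~\ref{prop:eqivir}. Now proceed inductively over $n$: given $\overline{\fp X}$ fixed, apply the second part of Theorem~\ref{thm:AWreform} (the ``$\E[d(X,\overline Y)^p]^{1/p}=\AW_p(\fp X,\fp Y)$, $\fp Y\simhk(\Omega,\F,\P,(\F_t),\overline Y)$'' statement) with $\fp Y:=\fp X^n$. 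This produces, on the \emph{same} basis as $\overline{\fp X}$, a process $\overline X{}^n$ with $(\Omega,\F,\P,(\F_t)_{t=1}^N,\overline X{}^n)\simhk\fp X^n$ and $\E[d(\overline X,\overline X{}^n)^p]^{1/p}=\AW_p(\fp X,\fp X^n)$. Doing this for each $n$ separately already yields \eqref{eq:Skorohod_Isometry} and hence (b), since the right-hand side tends to $0$ by hypothesis (a).

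One subtlety to flag: Theorem~\ref{thm:AWreform} only asserts existence of $\overline X{}^n$ on the basis of $\fp X$ \emph{given that} this basis admits independent randomization, and applying it repeatedly for $n=1,2,\dots$ on the one fixed basis is fine because that basis does not change — independent randomization is a property of the basis, not consumed by a single application. If one wanted all $\overline X{}^n$ jointly on one basis with any further joint structure one would need to be more careful, but for the statement as phrased (a common basis carrying $\overline{\fp X}$ and each $\overline{\fp X}{}^n$, with the pairwise isometry) no such care is needed. The main obstacle is thus not in this section at all — it is entirely inherited from Theorem~\ref{thm:AWreform}, whose second assertion in turn rests on the biadapted-map representation (Proposition~\ref{prop:biadaptedtm}) and the transfer principle; here one only has to assemble these pieces and check the trivial direction. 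I would close by noting that, taking $\overline{\fp X}$ as the canonical standard-Borel representative with independent randomization, one may in fact present all these processes on the explicit basis $([0,1]^N,\B_N,\lambda^N,(\B_t)_{t=1}^N)$ after a further application of Proposition~\ref{prop:eqivir}-type reasoning, which sets up the a.s.\ refinement pursued later in the section.
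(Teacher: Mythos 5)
Your proof is correct and follows essentially the same route as the paper: apply Proposition~\ref{prop:eqivir} to replace $\fp X$ by a standard-Borel $\simhk$-representative with independent randomization, then use the second assertion of Theorem~\ref{thm:AWreform} (equation \eqref{eq:sub}) for each $n$ separately to get $\overline X{}^n$ on that fixed basis with the isometry, while (b)$\Rightarrow$(a) follows from the fact that $\overline{\fp X}\marriage\overline{\fp X}{}^n$ is a process coupling. Your remarks on the one-sided nature of the moreover-part and on the independence-of-$n$ of the randomization property are also accurate and match the paper's intent.
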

\begin{proof}
 In order to show the implication from (a) to (b) and \eqref{eq:Skorohod_Isometry}, consider  $\fp X , \fp X_1, \fp X_2, \ldots  \in \FP_p$ such that $\AW_p(\fp X^n, \fp X) \to 0$. By Proposition~\ref{prop:eqivir}, there is a filtered process $\overline{\fp X} \simhk \fp X$  that allows for independent randomization and is based on a standard Borel space. For every $n \in \N$, an application of \eqref{eq:sub} to $\overline{\fp X}$ and $\overline{\fp X}{}^n$ yields a process $\overline{X}{}^n$ on the stochastic basis of $\overline{X}$ such that $\E[d(\overline{X}, \overline{X}{}^n)^p] = \AW^p_p(\fp X, \fp X{}^n)$. 

The implication from (b) to (a) is an easy consequence of Theorem~\ref{thm:AWreform} as $\overline{\fp X} \marriage \overline{\fp X}{}^n \in \fpcpl(\fp X,\fp X^n)$. 
\end{proof}

Next, we prove a more classical Skorokhod representation theorem with almost sure convergence instead of $L_p$-convergence. The idea of the proof is to inductively apply the classical Skorohod representation theorem for random variables with values in a Polish space to the information process. To avoid measurability problems in the induction step, we need a measurable version of the classical Skorokhod representation theorem. To prove this, we essentially repeat the construction  presented in \cite[Section~8.5]{Bo92}  while focusing on the specific aspects that are relevant for measurably and only giving an overview over the other aspects of the construction. For further details the reader is referred to \cite[Section~8.5]{Bo92}.
	
	
	\begin{proposition}\label{prop:SkBorel}
		Let $\cS$ be a Polish space. Then there is a Borel map 
		$$
		\Phi_{\cS}: \PP(\cS) \times [0,1] \to \cS
		$$ 
		satisfying
		\begin{enumerate}[label=\emph{(\roman*)}]
			\item $\Phi_{\cS}(\mu,\cdot)_\ast\lambda=\mu$. 
			\item If $\mu_n \to \mu$ weakly, then $\Phi_{\cS}(\mu_n, \cdot) \to \Phi_{\cS}(\mu,\cdot)$ $\lambda$-a.s. 
		\end{enumerate}
	\end{proposition}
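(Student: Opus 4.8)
The plan is to construct $\Phi_{\cS}$ by transporting the classical Skorokhod construction through a fixed Borel isomorphism and then making the dependence on $\mu$ Borel-measurable. First I would reduce to a convenient model space: since every Polish space $\cS$ is Borel isomorphic to a Borel subset of $[0,1]$ (or to $[0,1]$, $\N$, or a countable set), and such an isomorphism induces a Borel isomorphism $\PP(\cS) \cong \PP(\text{model})$ that is a homeomorphism for the weak topologies, it suffices to build $\Phi$ for $\cS = [0,1]$ (with minor bookkeeping for the lower-dimensional cases). The point of passing to $[0,1]$ is that there the classical quantile/inverse-distribution-function construction is available and completely explicit.

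Next I would recall the quantile construction: for $\mu \in \PP([0,1])$ with cdf $F_\mu(x) = \mu([0,x])$, set $F_\mu^{-1}(u) = \inf\{x : F_\mu(x) \ge u\}$ and define $\Phi_{[0,1]}(\mu,u) := F_\mu^{-1}(u)$. Property (i) is the standard fact that $F_\mu^{-1}$ pushes $\lambda$ forward to $\mu$. For property (ii), the classical lemma is that if $\mu_n \to \mu$ weakly then $F_{\mu_n}(x) \to F_\mu(x)$ at every continuity point $x$ of $F_\mu$, and consequently $F_{\mu_n}^{-1}(u) \to F_\mu^{-1}(u)$ at every continuity point $u$ of $F_\mu^{-1}$; since $F_\mu^{-1}$ is monotone it has at most countably many discontinuities, so convergence holds $\lambda$-a.s. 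For Borel measurability of $(\mu,u) \mapsto F_\mu^{-1}(u)$: the map $\mu \mapsto F_\mu(x)$ is Borel for each fixed $x$ (indeed the map $\mu \mapsto \mu(B)$ is Borel for every Borel $B$), hence $(\mu,u) \mapsto F_\mu^{-1}(u) = \inf\{x \in \Q \cap [0,1] : F_\mu(x) \ge u\}$ is a countable infimum of expressions each of which is Borel in $(\mu,u)$, so $\Phi_{[0,1]}$ is Borel.

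Then I would transfer back: given the Borel isomorphism $\iota : \cS \to S_0 \subseteq [0,1]$ with $S_0$ Borel, and the induced isomorphism $\iota_* : \PP(\cS) \to \PP_0 := \{\rho \in \PP([0,1]) : \rho(S_0) = 1\}$, one would like to define $\Phi_{\cS}(\mu, u) := \iota^{-1}\big(\Phi_{[0,1]}(\iota_*\mu, u)\big)$. This requires that $\Phi_{[0,1]}(\iota_*\mu, u) \in S_0$ for $\lambda$-a.e.\ $u$, which holds by property (i) since $(\Phi_{[0,1]}(\iota_*\mu,\cdot))_*\lambda = \iota_*\mu$ is concentrated on $S_0$; one redefines $\Phi_{\cS}$ arbitrarily on the exceptional null set (say as a fixed point of $\cS$) to get an everywhere-defined Borel map, and properties (i) and (ii) are inherited because $\iota$ and $\iota^{-1}$ are Borel and, crucially, $\iota_*$ is a homeomorphism for weak convergence — this last fact is where one uses that a Borel isomorphism between Polish spaces, while not continuous, still induces a weak-topology homeomorphism on the spaces of probability measures (this is the nontrivial input, e.g.\ via \cite{Bo07} or the fact that convergence in $\PP$ can be tested against bounded Borel — rather than only continuous — functions after passing to a suitable metric, or simply by noting $\PP(\cS)$ and $\PP_0$ are standard Borel and the weak topologies match on the Borel isomorphism class).

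The main obstacle I anticipate is precisely this last point: ensuring that the Borel isomorphism $\iota$ transports \emph{weak} convergence correctly, since $\iota$ itself need not be continuous and a priori $\mu_n \to \mu$ weakly in $\PP(\cS)$ need not give $\iota_*\mu_n \to \iota_*\mu$ weakly in $\PP([0,1])$. The cleanest route is to avoid $[0,1]$ altogether and instead realize $\cS$ as a $G_\delta$ subset of the Hilbert cube $[0,1]^{\N}$ via a \emph{homeomorphic} embedding (Urysohn), where the embedding \emph{is} continuous so weak convergence transfers for free, and then run a quantile-type construction on $[0,1]^{\N}$ coordinate-by-coordinate (each coordinate handled by the one-dimensional quantile map, conditionally, using a measurable disintegration and independent uniform inputs obtained by splitting the single $[0,1]$ input via a Borel isomorphism $[0,1] \cong [0,1]^\N$). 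This is exactly the construction carried out in \cite[Section~8.5]{Bo92}, so I would cite it for the bulk of the argument and only spell out the two measurability checks that matter here: that $\mu \mapsto$ (the relevant conditional cdfs) is Borel, and that the a.s.-convergence statement survives the countable coordinatewise composition. Everything else — property (i), the exceptional-null-set redefinitions, the reduction of countable/finite $\cS$ — is routine.
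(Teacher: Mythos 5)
Your reduction to a model space, and your pivot from a Borel isomorphism (which is discontinuous and hence does not transport weak convergence) to a homeomorphic embedding of $\cS$ into the Hilbert cube $[0,1]^\N$, are both correct and match the paper's Step 3. The genuine gap is in how you propose to handle $[0,1]^\N$: the coordinate-by-coordinate conditional quantile map (split $u$ into $(u_1, u_2, \dots)$ via a measure isomorphism $[0,1]\cong[0,1]^\N$, set $X_1 = q_{\mu_1}(u_1)$, $X_2 = q_{\mu(\cdot \mid X_1)}(u_2)$, and so on) satisfies (i) but fails (ii), because the disintegration $\mu \mapsto \mu(\cdot\mid x_1)$ is not weakly continuous. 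For instance, with $\mu_n = \tfrac12\delta_{(1/2 - 1/n,\,0)} + \tfrac12\delta_{(1/2 + 1/n,\,1)} \to \mu = \tfrac12\delta_{(1/2,\,0)} + \tfrac12\delta_{(1/2,\,1)}$ the second coordinate of your map is $1_{\{u_1 \ge 1/2\}}$ for every $n$ but $1_{\{u_2 \ge 1/2\}}$ in the limit, so there is no $\lambda^\N$-a.s.\ convergence. This discontinuity of conditional laws under weak convergence is exactly the phenomenon adapted topologies are built to repair; the Knothe--Rosenblatt-type map you describe metrizes the \emph{adapted} weak topology (cf.\ the discussion of \cite{BePaPo21} in Section 2), not the ordinary weak topology that (ii) is about. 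It is also not the construction in \cite[Section~8.5]{Bo92}, so deferring to that reference would not cover the step as you have stated it.

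The correct route through $[0,1]^\N$ keeps the quantile construction strictly one-dimensional and never disintegrates. The paper inserts the Cantor set $C$ as an intermediate space: Step 1 defines $\Phi_C(\mu,u)$ as the quantile of $\mu\in\PP(C)$ (essentially your $[0,1]$ computation restricted to $C$, with Borel measurability verified via $\{q_\mu(u)<a\}=\{\mu([0,a)\cap C)>u\}$). The decisive input from \cite[Lemma~8.5.3]{Bo92} is a continuous surjection $F : C \to [0,1]^\N$ such that $\mu \mapsto F_*\mu$ admits a \emph{continuous} right inverse $\Psi : \PP([0,1]^\N) \to \PP(C)$. Setting $\Phi_{[0,1]^\N} := F \circ \Phi_C \circ (\Psi, \id_{[0,1]})$, property (ii) is inherited from $\Phi_C$ because $F$ and $\Psi$ are continuous, so weakly convergent $\mu_n$ become weakly convergent $\Psi(\mu_n)$ in $\PP(C)$ where the one-dimensional a.s.\ convergence applies. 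The existence of the continuous selection $\Psi$ is precisely the device that replaces your (discontinuous) disintegration; without it the argument does not close.
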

	\begin{proof}
        \emph{Step 1: $\cS=C$.} We first cover the case, where $\cS$ is the classical Cantor set $C \subset [0,1]$. In this case, we choose $\Phi_{C}(\mu,\cdot)$ to be the quantile function $q_\mu$, i.e.\
		$$
		\Phi_{C}(\mu,u) := q_\mu(u) := \sup\{ x \in [0,1] : \mu([0,x)\cap C) \le u \}.
		$$

		It is easy to verify (i) and (ii). It remains to show that $\Phi_{C}$ is Borel. Since the Borel-$\sigma$-algebra on $C$ is the trace of the Borel-$\sigma$-algebra on $[0,1]$, it suffices to show that the preimages of sets of type $C \cap [0,a)$, where $a \in [0,1]$, are Borel. Indeed, we have
		$$
		\Phi_{\cS}^{-1} (C \cap [0,a))  =  \{ (\mu,u) \in  \PP(\cS) \times [0,1]: q_\mu(u) <a  \} = \{ (\mu,u) \in  \PP(\cS) \times [0,1]: \mu([0,a)\cap C)>u  \},
		$$
		which is clearly Borel.
		
		\emph{Step 2: $\cS = [0,1]^{\N}$.} \cite[Lemma~8.5.3]{Bo92} guarantees the existence of a continuous surjective map $F : C \to [0,1]^\N$ such that the map $\PP(C) \to \PP([0,1]^\N) : \mu \mapsto F_\ast\mu$ has a continuous right inverse $\Psi$. Then we set
		$$
		\Phi_{[0,1]^\N} = F \circ \Phi_{C} \circ (\Psi,\id_{[0,1]})
		$$
		and observe that it is Borel as concatenation of Borel maps. Property (i) is satisfied because we have 
		$$
		\Phi_{[0,1]^N}(\mu, \cdot) = F \circ \Phi_C(\Psi(\mu),\cdot) 
		$$
		and hence 
		$$
		\Phi_{[0,1]^N}(\mu, \cdot)_\ast\lambda= F_\ast\Phi_C(\Psi(\mu),\cdot)_\ast\lambda= F_\ast\Psi(\mu) = \mu.
		$$
		
		The continuity property (ii) is inherited from $\Phi_C$ because $F$ and $\Psi$ are continuous.
		
		\emph{Step 3: general $\cS$.} 
		Let $\iota : \cS \to [0,1]^\N$ be an embedding such that $\iota(\cS) \subseteq[0,1]^\N$ is $G_\delta$ and hence Borel. Let $j : [0,1]^\N \to \cS$ be defined via $j(\iota(s))=s$ and $j(x)=s_0$ if $x \notin \iota(\cS)$, where $s_0 \in \cS$ is an arbitrary fixed element. It is easy to see that $\Phi_{\cS}:=j \circ \Phi_{[0,1]^\N} \circ \PP(\iota)$, where $\PP(\iota)$ denotes the map $\PP(\cS) \to \PP([0,1]^\N): \mu \mapsto \iota_\ast\mu$, has the desired properties.
	\end{proof}

	\begin{lemma}\label{lem:ipdis}
		Let $\fp{X}=([0,1]^{N+1},\F,\lambda^{N+1},(\F_t)_{t=1}^{N+1},(X_t)_{t=1}^{N+1})$ where $\F_t=\sigma(\pj_{1:t})$ and $\F=\F^{N+1}$ be a filtered process  and denote $\fp{X}|_{\omega_1}=([0,1]^{N},\F,\lambda^{N},(\sigma(\pj_{2:t}))_{t=2}^{N+1},(X_t(\omega_1,\cdot)_{t=2}^{N+1}))$ for $\omega_1\in[0,1]$. Then we have for $t\leq N$
		\begin{align}
			\ip_{t+1}^{(N+1)}(\fp{X})(\omega_{1:t+1})=\ip_t^{(N)}(\fp{X}|_{\omega_1})(\omega_{2:t+1})\text{   for }\lambda\text{-almost all  }\omega_1\in[0,1].
		\end{align}
	\end{lemma}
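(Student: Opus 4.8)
The plan is to prove the identity by backward induction on $t$, running from $t=N$ down to $t=1$, and at each step comparing the recursive definition of the information process on $[0,1]^{N+1}$ with the recursive definition on the sliced space $[0,1]^N$. The key observation to isolate first is a disintegration fact: because the stochastic basis is $([0,1]^{N+1}, \lambda^{N+1})$ with $\F_t = \sigma(\pj_{1:t})$, conditional laws given $\F_t$ can be computed by ``freezing'' the first $t$ coordinates and integrating over the rest; in particular, for $t \ge 1$ and a bounded measurable $h$ on the path space,
\begin{align*}
\Law\big(h \,\big|\, \F_{t+1}\big)(\omega_{1:t+1}) = \Law\big(h(\omega_1, \cdot) \,\big|\, \sigma(\pj_{2:t+1})\big)(\omega_{2:t+1})
\end{align*}
for $\lambda$-a.e.\ $\omega_1$, where on the right the conditional law is taken in the sliced space $\fp X|_{\omega_1}$. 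This is just the fact that the Lebesgue measure $\lambda^{N+1}$ disintegrates as $\lambda \otimes \lambda^N$ along the first coordinate, combined with the tower property, and it holds for $\lambda$-a.e.\ $\omega_1$ simultaneously for a countable determining family of test functions.

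For the induction itself: at $t = N$ one has $\ip_{N+1}^{(N+1)}(\fp X)(\omega_{1:N+1}) = X_{N+1}(\omega_{1:N+1})$ and $\ip_N^{(N)}(\fp X|_{\omega_1})(\omega_{2:N+1}) = X_{N+1}(\omega_1, \omega_{2:N+1})$, so the two agree identically (no null set needed). For the inductive step, suppose the identity holds at level $t+1 \le N$, i.e.\ for $\lambda$-a.e.\ $\omega_1$ we have $\ip_{t+2}^{(N+1)}(\fp X)(\omega_{1:t+2}) = \ip_{t+1}^{(N)}(\fp X|_{\omega_1})(\omega_{2:t+2})$ for $\lambda^{N}$-a.e.\ $\omega_{2:t+2}$. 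By the recursive definition (Definition~\ref{def:ip}),
\begin{align*}
\ip_{t+1}^{(N+1)}(\fp X)(\omega_{1:t+1}) = \Big(X_{t+1}(\omega_{1:t+1}),\ \Law\big(\ip_{t+2}^{(N+1)}(\fp X)\,\big|\,\F_{t+1}\big)(\omega_{1:t+1})\Big).
\end{align*}
Applying the disintegration fact to $h = \ip_{t+2}^{(N+1)}(\fp X)$ and then substituting the induction hypothesis (which changes the conditional law only on a $\lambda^{N}$-null set, hence does not affect the conditional law as an element of $\PP(\Z_{t+2})$), the second component equals $\Law\big(\ip_{t+1}^{(N)}(\fp X|_{\omega_1})\,\big|\,\sigma(\pj_{2:t+1})\big)(\omega_{2:t+1})$ for $\lambda$-a.e.\ $\omega_1$. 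Together with $X_{t+1}(\omega_{1:t+1}) = (X_{t+1}|_{\omega_1})(\omega_{2:t+1})$, this is exactly $\ip_{t}^{(N)}(\fp X|_{\omega_1})(\omega_{2:t+1})$, completing the step.

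\textbf{Main obstacle.} The delicate point is purely measure-theoretic bookkeeping of null sets: at each induction step the induction hypothesis only gives an a.e.\ statement in $\omega_1$, and within each good $\omega_1$ an a.e.\ statement in the remaining coordinates, and one must check that substituting an a.e.-equal integrand into a conditional expectation preserves the identity $\lambda$-a.e.\ in $\omega_1$ --- this uses Fubini to see that the exceptional set $\{(\omega_1,\omega_{2:t+2}) : \text{hypothesis fails}\}$ is $\lambda^{N+1}$-null, hence has $\lambda^N$-null $\omega_1$-sections for $\lambda$-a.e.\ $\omega_1$, and that modifying $h$ on a null set does not change $\Law(h \mid \cdot)$. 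One should also make sure that the disintegration identity is stated with a version of the conditional law that is jointly measurable (which is available since all spaces are Polish, cf.\ the remark after Definition~\ref{def:ip}), so that the ``$\lambda$-a.e.\ $\omega_1$'' quantifiers compose across the $N$ steps into a single full-measure set of $\omega_1$. None of this is hard, but it is the part that needs to be written carefully rather than waved through.
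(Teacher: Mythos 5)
Your proof follows exactly the paper's argument: backward induction on $t$ starting from the identity $\ip_{N+1}^{(N+1)}(\fp X)=X_{N+1}=\ip_N^{(N)}(\fp X|_{\omega_1})$, and then using the recursive definition of the information process together with the disintegration $\lambda^{N+1}=\lambda\otimes\lambda^N$ (applied to the conditional law given $\F_{t+1}$ versus given $\sigma(\pj_{2:t+1})$ on the sliced space) for the inductive step. The paper's write-up is terser and leaves the null-set bookkeeping implicit, but the structure and the key disintegration observation are the same as yours.
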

	\begin{proof}
		We prove this by backward induction on $t$. For $t=N$ it holds that
  \[
  ip_{N+1}^{(N+1)}(\fp{X})(\omega_{1:N+1})=X_{N+1}(\omega_{1:N+1}) 
			=\ip_N^{(N)}(\fp{X}|_{\omega_1})(\omega_{2:N+1}).
  \]
		We show the statement holds for $t-1$, assuming it holds true for $t$ in the second equality:
		\begin{align*}
			\ip_{t-1+1}^{(N+1)}(\fp{X})(\omega_{1:t})&=(X_t(\omega_{1:t}),\Law(\ip_{t+1}(\fp{X})\vert\F_t)(\omega_{1:t}))\\
			&=(X_t(\omega_{1:t}),\Law(\ip_{t}(\fp{X}|_{\omega_1})\vert\F_t)(\omega_{2:t}))\\
			&=\ip_{t-1}^{(N)}(\fp{X}|_{\omega_1})(\omega_{2:t}). 
		\end{align*}
		So we are able to conclude that the statement applies to all $t\leq N$.
	\end{proof}

Let $\FFP_N$ denote the space of $\mathcal S$-valued filtered processes with $N$ time steps\footnote{I.e.\  $X_t$ takes values in $\mathcal{S}$ for every $1\le t \le N$. The assumption that all random variables take values in the same Polish space $\mathcal{S}$ and not in different image spaces $\X_1, \dots, \X_N$ eases the notion in the proof of the following theorem and is no loss of generality as we can take $\mathcal{S}$ to be the disjoint union of these spaces $\X_1, \dots, \X_N$.
}. The next theorem is a consequence of iterative applications of Proposition~\ref{prop:SkBorel} to the information process.
	\begin{theorem}
		\label{thm:asskorokhodmap}
		For every $N\in \N$ there is a Borel map
		$$
		\Phi^{(N)}: \PP(\Z^{(N)}_1) \times [0,1]^N  \to \mathcal{S}^N
		$$
		such that for every $\mu \in \PP(\Z^{(N)}_1)$ 
		$$
		\fp X^\mu := ([0,1]^N, \B_{[0,1]^N}, \lambda^N, (\pj_{1:t})_{t=1}^N, \Phi^{(N)}(\mu, \cdot)) 
		$$
		is a filtered process satisfying $\Law(\ip^{(N)}_1(\fp X^\mu))=\mu$. Moreover, if $\mu_n \to \mu$ weakly, then  $\Phi^{(N)}(\mu_n,\cdot) \to \Phi^{(N)}(\mu, \cdot)$ $\lambda^N$-a.s.
	\end{theorem}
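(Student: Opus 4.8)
The plan is to construct $\Phi^{(N)}$ by induction on $N$, using the Skorokhod map $\Phi_{\cS}$ from Proposition~\ref{prop:SkBorel} at each stage and the disintegration identity for information processes from Lemma~\ref{lem:ipdis} as the bridge between time horizons $N$ and $N+1$. For $N=1$ the canonical space is $\Z_1^{(1)} = \mathcal{S}$, the information process is just $\ip_1(\fp X) = X_1$, and we simply set $\Phi^{(1)} := \Phi_{\cS}$; properties (i)--(ii) of Proposition~\ref{prop:SkBorel} give exactly the claim. For the inductive step, recall $\Z_1^{(N+1)} = \Z_1^- \times \Z_1^+ = \mathcal{S} \times \Pc_p(\Z_2^{(N+1)})$, and that $\Z_2^{(N+1)}$ is (canonically isomorphic to) the canonical space $\Z_1^{(N)}$ for $N$ steps shifted by one in time. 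Given $\mu \in \Pc(\Z_1^{(N+1)})$, I would first use the one-step Skorokhod map $\Phi_{\Z_1^{(N+1)}}$ to realize a $\Z_1^{(N+1)}$-valued random variable on $([0,1], \lambda)$ with law $\mu$; writing its two components as $(\Xi_1, K_1)$ with $\Xi_1 \in \mathcal{S}$ and $K_1 \in \Pc_p(\Z_2^{(N+1)})$, one reads off $X_1 := \Xi_1$. Then, on a second independent copy of $[0,1]$, one invokes the inductive map $\Phi^{(N)}$ applied to the (random) measure $K_1$ to produce the remaining coordinates $X_2, \dots, X_{N+1}$; Lemma~\ref{lem:ipdis} is precisely what guarantees that this "first realize $\ip_1$, then realize the conditional law" procedure reproduces the correct joint law of the information process, so that $\Law(\ip_1^{(N+1)}(\fp X^\mu)) = \mu$.

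Concretely, I would define
\[
\Phi^{(N+1)}(\mu, u_1, \dots, u_{N+1}) := \Bigl( \pj_{\mathcal{S}}\,\Phi_{\Z_1^{(N+1)}}(\mu, u_1),\ \Phi^{(N)}\bigl(\pj_{\Pc_p(\Z_2^{(N+1)})}\,\Phi_{\Z_1^{(N+1)}}(\mu, u_1),\ u_2, \dots, u_{N+1}\bigr) \Bigr),
\]
which is Borel as a composition of Borel maps (here $\Phi^{(N)}$ is Borel jointly in its measure argument and the uniforms by the inductive hypothesis, and $\mu \mapsto \Phi_{\Z_1^{(N+1)}}(\mu,u_1)$ is Borel jointly by Proposition~\ref{prop:SkBorel}). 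To verify $\Law(\ip_1^{(N+1)}(\fp X^\mu)) = \mu$: by Lemma~\ref{lem:ipdis}, for $\lambda$-a.e.\ $u_1$ the conditional information process $\ip^{(N)}$ of the slice $\fp X^\mu|_{u_1}$ coincides with the shifted $\ip^{(N+1)}$, and by the inductive hypothesis applied to the measure $K_1(u_1) := \pj_{\Pc_p}\Phi_{\Z_1^{(N+1)}}(\mu,u_1)$ we get $\Law(\ip_1^{(N)}(\fp X^\mu|_{u_1})) = K_1(u_1)$. Unwinding the recursive definition of $\ip_1^{(N+1)}$, its first coordinate is $X_1 = \pj_{\mathcal{S}}\Phi_{\Z_1^{(N+1)}}(\mu, u_1)$ and its second coordinate is $\Law(\ip_2^{(N+1)}(\fp X^\mu) \mid \F_1)$, which by the slice identity equals $K_1(u_1)$; hence $\ip_1^{(N+1)}(\fp X^\mu)(u_1, \cdot) = \Phi_{\Z_1^{(N+1)}}(\mu, u_1)$ for a.e.\ $u_1$, and since the latter pushes $\lambda$ forward to $\mu$ by Proposition~\ref{prop:SkBorel}(i), the claim follows.

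For the a.s.-convergence statement: if $\mu_n \to \mu$ weakly in $\Pc(\Z_1^{(N+1)})$, then by Proposition~\ref{prop:SkBorel}(ii) we have $\Phi_{\Z_1^{(N+1)}}(\mu_n, \cdot) \to \Phi_{\Z_1^{(N+1)}}(\mu, \cdot)$ $\lambda$-a.s.; in particular the first component converges a.s.\ and, crucially, the measure component $K_1^n \to K_1$ $\lambda$-a.s.\ in $\Pc_p(\Z_2^{(N+1)})$, which in particular gives weak convergence of these measures for a.e.\ fixed $u_1$. By the inductive hypothesis, for a.e.\ fixed $u_1$, $\Phi^{(N)}(K_1^n(u_1), \cdot) \to \Phi^{(N)}(K_1(u_1), \cdot)$ $\lambda^N$-a.s. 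Combining these via Fubini on $[0,1] \times [0,1]^N$ yields $\Phi^{(N+1)}(\mu_n, \cdot) \to \Phi^{(N+1)}(\mu, \cdot)$ $\lambda^{N+1}$-a.s. I expect the main obstacle to be the bookkeeping around \emph{joint} measurability and the joint-in-$u_1$ application of the inductive a.s.-convergence — i.e.\ making the "for a.e.\ $u_1$, a.e.\ $u_{2:N+1}$" statements precise and legitimately assembling them into a single a.e.\ statement on the product space (one must be careful that the $\lambda$-null set of bad $u_1$ in Lemma~\ref{lem:ipdis} and in the convergence does not depend on $n$ in a problematic way, which is handled by taking a countable union over $n$). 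The weak-convergence-preservation under disintegration, i.e.\ that $\mu_n \to \mu$ forces the $\Pc_p$-valued component of $\Phi_{\Z_1^{(N+1)}}(\mu_n, \cdot)$ to converge a.s., is delivered directly by Proposition~\ref{prop:SkBorel}(ii) and requires no separate argument.
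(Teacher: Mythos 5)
Your proposal is correct and follows essentially the same route as the paper's proof: same base case $\Phi^{(1)} = \Phi_{\cS}$, same recursive definition $\Phi^{(N+1)} = (\id_{\cS}\times\Phi^{(N)})\circ(\Phi_{\Z_1^{(N+1)}}\times\id_{[0,1]^N})$, same use of Lemma~\ref{lem:ipdis} to slice off the first coordinate in verifying $\Law(\ip_1^{(N+1)}(\fp X^\mu))=\mu$, and the same Fubini-plus-induction argument for the a.s.\ convergence statement. The bookkeeping concern you flag at the end is handled exactly as you suggest (countable union of $\lambda$-null sets over $n$), and the paper does not dwell on it either.
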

	\begin{proof}
		The statement will be proven by induction on $N$. In the case $N=1$, we can set $\Phi^{(1)} := \Phi_{\cS}$, where $\Phi_{\cS}$ is the map introduced in Proposition~\ref{prop:SkBorel}. Since $\ip_1^{(1)}(\fp X^\mu) = \Phi^{(N)}(\mu,\cdot) = \Phi_{\cS}(\mu,\cdot)$, it is immediate that $\Phi^{(1)}$ has the required properties.
		
		For the inductive step, assume that there is a Borel map
		$$
		\Phi^{(N)}: \PP(\Z^{(N)}_1) \times [0,1]^N  \to \mathcal{S}^N
		$$
		with the desired properties. We aim to construct the map $\Phi^{(N+1)}$. Proposition~\ref{prop:SkBorel} applied to the space $\Z^{(N+1)}$ yields a Borel map 
		$$
		\Phi_{\Z_1^{(N+1)}} : \PP(\Z^{(N+1)})  \times [0,1] \to\Z^{(N+1)}
		$$
		that satisfies the conditions (i) and (ii) stated in this proposition.  We can define $\Phi^{(N+1)}$ as
		$$
		\Phi^{(N+1)}:= (\id_{\cS} \times \Phi^{(N)}) \circ ( \Phi_{\Z^{(N+1)}} \times \id_{[0,1]^N}).
		$$
		Clearly, $\Phi^{(N+1)}$ is Borel as a concatenation of Borel maps.
		The following display explains in more detail how $\Phi^{(N+1)}$ acts:
		\begin{alignat*}{1}
			&\PP(\Z_1^{(N+1)}) \times [0,1]^{N+1}  \!\!\xrightarrow{ \Phi_{\Z_1^{(N+1)}} \times \id_{[0,1]^N}\!} \cS \times \PP(\Z_1^{(N)}) \times [0,1]^N \xrightarrow{\quad\;\id_{\cS} \times \Phi^{(N)}\quad\;\; }\; \cS \times \cS^N\\
			&\qquad\;\;(\mu, \omega_{1:N+1}) \xmapsto{\qquad\qquad\qquad\;}  (Z^-(\omega_1),Z^+(\omega_1),\omega_{2:N+1}) \xmapsto{\qquad\!}(Z^-(\omega_1), \Phi^{(N)}(Z^+(\omega_1),\omega_{2:N+1} )).
		\end{alignat*}
		Here, $Z$ is the $\Z_1^{(N+1)}$-valued random variable $\Phi_{\Z^{(N+1)}_1}(\mu,\cdot)$ and 
		we write its components as $Z=(Z^-,Z^+) \in \cS \times \Z_1^{(N)}$. 
		
		Let $\mu \in \PP(\Z^{(N+1)}_1)$. We need to show  that  the filtered process  
		$$
		\fp X^\mu := ([0,1]^{N+1}, \B_{[0,1]^{N+1}}, \lambda^{N+1}, \sigma(\pj_{1:t})_{t=1}^{N+1}, \Phi^{(N+1)}(\mu, \cdot)) 
		$$
		satisfies $\Law(\ip^{(N+1)}_1(\fp X^\mu))=\mu$. By the induction hypothesis, for almost all $\omega_1$, the filtered process
		$$
		\fp X^{Z^+(\omega_1)} := ([0,1]^N, \B_{[0,1]^N}, \lambda^N, \sigma(\pj_{1:t})_{t=1}^N, \Phi^{(N)}(Z^+(\omega_1), \cdot)) 
		$$
		satisfies $\Law(\ip_1^{(N)}( \fp X^{Z^+(\omega_1)} )  ) = Z^+(\omega_1)$. 

When setting $\fp X = \fp X^{\mu}$ and $\fp X|_{\omega_1} \!= \fp X^{Z^+(\omega_1)}$, we are precisely in the setting of Lemma~\ref{lem:ipdis} and can therefore conclude that $\ip_2^{(N+1)}(\fp X^\mu) = \ip_1^{(N)}(\fp X^{Z^+(\cdot)})$. Using these observations, we see that
		\begin{align*}
			\ip_1^{(N+1)}(\fp X^\mu) &= (\Phi_1^{(N+1)}(\mu,\cdot ), \Law(\ip_2^{(N+1)}(\fp X^\mu) |\F_1^{(N+1)} ))  =(Z^-, \Law(\ip_1^{(N)}(\fp X^{Z^+}) |\F_0^{(N)} ))\\
			&=(Z^-,\Law(\ip_1^{(N)}(\fp X^{Z^+}))) = (Z^-,Z^+)=Z,
		\end{align*}
		and hence $\Law(\ip^{(N+1)}(\fp X^\mu)) = \Law(Z) = \mu$. 
		
		Finally, assume that $\mu^n \to \mu$.  Then we have $(Z_n^-,Z_n^+) \to (Z^-,Z^+)$ $\lambda$-a.s by property (ii) of $\Phi_{\Z_1^{(N+1)}}$. In particular, one a $\lambda$-full set, the sequence of measures $Z^{+}_n(\omega_1) \to Z^+(\omega_1)$ weakly. Hence,  $\Phi^{(N)}(Z^+_n(\omega_1),\cdot  ) \to \Phi^{(N)}(Z^+(\omega_1),\cdot  )$ almost surely by the inductive hypothesis. All in all, this yields $\Phi^{(N+1)}(\mu_,\cdot) \to \Phi^{(N+1)}(\mu_,\cdot)$ $\lambda^{N+1}$-a.s.
	\end{proof}

One can interpret the map constructed in Theorem~\ref{thm:asskorokhodmap} as a map that assigns every filtered process a canonical representative in such a way that convergence in the adapted weak topology is equivalent to $ \lambda^N$-a.s.\ convergence of the representatives.

\begin{corollary}\label{cor:skorohod}
	There exists a mapping on $\FFP$ that assigns to every filtered process $\fp X$ random variables $T_t^\fp{X}:[0,1]^t\rightarrow\X_{t}$ for $1\leq t\leq N$ such that the filtered  process
	\begin{align*}
	\fp{X} \simhk ([0,1]^N, \B_{[0,1]^N}, \lambda^N, \sigma(\pj_{1:t})_{t=1}^N, (T_t^\fp{X})_{t=1}^N) 
	\end{align*}
	and the following assertions hold true: 
 \begin{enumerate} [label=\emph{(\roman*)}]
     \item For all $\fp X^n, \fp X \in \FP$ we have $\fp X^n \to \fp X$ in the adapted weak topology if and only if $(T_t^{\fp X^n})_{t=1}^N \to (T_t^{\fp X})_{t=1}^N$ $\lambda$-a.s.
     \item For all $\fp X^n, \fp X \in \FP_p$ we have $\AW_p(\fp X^n, \fp X) \to 0$ if and only if $(T_t^{\fp X^n})_{t=1}^N \to (T_t^{\fp X})_{t=1}^N$ in $L^p([0,1]^N, \lambda^N)$.
\end{enumerate}
\end{corollary}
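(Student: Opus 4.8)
The plan is to define the desired map by composing the canonical embedding $\fp X \mapsto \Law(\ip_1(\fp X))$ with the Borel map $\Phi^{(N)}$ constructed in Theorem~\ref{thm:asskorokhodmap}. Concretely, for $\fp X \in \FP$ I set $(T_t^{\fp X})_{t=1}^N := \Phi^{(N)}(\Law(\ip_1(\fp X)), \cdot )$, where we use $\mathcal S = \bigsqcup_{t=1}^N \X_t$ as in the footnote so that all coordinate maps share a common target. Since $\AW_p(\fp X, \fp Y) = 0$ is equivalent to $\Law(\ip_1(\fp X)) = \Law(\ip_1(\fp Y))$ by Theorem~\ref{thm:isometry}, this is well-defined on $\FFP$. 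The filtered process $\fp X^{\mu}$ from Theorem~\ref{thm:asskorokhodmap}, with $\mu = \Law(\ip_1(\fp X))$, satisfies $\Law(\ip_1(\fp X^\mu)) = \mu = \Law(\ip_1(\fp X))$, so by the isometry $\AW_p(\fp X, \fp X^\mu) = \W_p(\Law(\ip_1(\fp X)), \Law(\ip_1(\fp X^\mu))) = 0$, which gives the claimed $\simhk$-equivalence
\[
\fp X \simhk ([0,1]^N, \B_{[0,1]^N}, \lambda^N, \sigma(\pj_{1:t})_{t=1}^N, (T_t^{\fp X})_{t=1}^N).
\]

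For assertion (i): if $\fp X^n \to \fp X$ in the adapted weak topology, then since $\AW_0$ metrizes this topology and is (via Theorem~\ref{thm:isometry}) isometric to $\W_0$ on $\Z_1$, we get $\Law(\ip_1(\fp X^n)) \to \Law(\ip_1(\fp X))$ weakly; the a.s.-continuity clause of Theorem~\ref{thm:asskorokhodmap} then yields $\Phi^{(N)}(\Law(\ip_1(\fp X^n)), \cdot) \to \Phi^{(N)}(\Law(\ip_1(\fp X)), \cdot)$ $\lambda^N$-a.s., i.e.\ $(T_t^{\fp X^n})_t \to (T_t^{\fp X})_t$ $\lambda^N$-a.s. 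Conversely, if the representatives converge $\lambda^N$-a.s., they converge in probability, hence in $L^0(\lambda^N)$ after truncation; since all processes $\fp X^n$, $\fp X$ are $\simhk$-equivalent to their representatives and these are defined on the common basis $([0,1]^N, \B_{[0,1]^N}, \lambda^N, (\sigma(\pj_{1:t}))_t)$, the identity coupling is bicausal, so $\AW_0(\fp X^n, \fp X) \le \E_{\lambda^N}[d(T^{\fp X^n}, T^{\fp X}) \wedge 1] \to 0$ by Theorem~\ref{thm:AWreform} (or directly from Definition~\ref{def:AW}). Assertion (ii) is the same argument in $L^p$: for the ``only if'' direction one combines the a.s.-convergence just obtained with the equivalence of $\AW_p$-convergence and ($\AW_0$-convergence plus convergence of $p$-th moments), the latter forcing uniform integrability of $d(T^{\fp X^n},\mathbf 0)^p$ and hence upgrading a.s.-convergence to $L^p$-convergence; for the ``if'' direction one again uses the identity coupling together with Theorem~\ref{thm:AWreform}.

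The main obstacle is the ``only if'' direction of (ii): passing from a.s.\ convergence of the representatives to $L^p$ convergence is not automatic and requires a uniform integrability argument. The cleanest route is to invoke that on $\FFP_p$, $\AW_p$-convergence is equivalent to adapted weak convergence together with convergence of $p$-th moments (as recalled after Theorem~\ref{thm:AWintro}, citing \cite{BaBePa21}); since $\AW_p(\fp X^n, \fp X) \to 0$ implies $\E[d(X^n,x_0)^p] \to \E[d(X,x_0)^p]$ and the $T^{\fp X^n}$ are distributionally the $\ip^-$-components of processes with these laws, Vitali's theorem promotes $\lambda^N$-a.s.\ convergence to $L^p(\lambda^N)$ convergence. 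A minor bookkeeping point to address is that $T^{\fp X^n}$ has the law of the $\X$-valued marginal $Z^-$ of $\ip_1(\fp X^n)$, so its $p$-th moment coincides with $\E_{\fp X^n}[d(X^n, x_0)^p]$, making the uniform integrability transfer transparent.
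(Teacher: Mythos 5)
Your proposal is correct and takes essentially the same approach as the paper: it defines $T^{\fp X} := \Phi^{(N)}(\Law(\ip_1(\fp X)),\cdot)$, derives the $\simhk$-equivalence from the isometry of Theorem~\ref{thm:isometry}, gets (i) from the a.s.-continuity clause of Theorem~\ref{thm:asskorokhodmap} in one direction and the identity coupling in the other, and handles the ``only if'' direction of (ii) via uniform integrability of $p$-th moments (the paper phrases this as $\W_p$-convergence of $\law(T^{\fp X^n})$, you phrase it via convergence of moments plus Vitali; these are the same observation). The extra bookkeeping you flag about $\Law(T^{\fp X^n})$ matching the $Z^-$-marginal is harmless but is already guaranteed by the fact that $\simhk$-equivalent processes have equal path-space laws.
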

\begin{proof}
We set $(T_t^{\fp X})_{t=1}^N := \Phi^{(N)}(\law(\ip_1(\fp X)))$, where $\Phi^{(N)}$ is the map defined in Theorem~\ref{thm:asskorokhodmap}. Then claim (1) is immediate. In order to prove (2), note that $\AW_p(\fp X^n, \fp X) \to 0$ implies that $\law((T_t^{\fp X^n})_{t=1}^N) \to  \law((T_t^{\fp X})_{t=1}^N)$ in $\W_p$. Hence, the $p$-th moments of these random variables are uniformly integrable, so pointwise convergence implies $L_p$-convergence. 
\end{proof}

A further consequence of this result is
\begin{corollary}
Let $\X$ be a separable Banach space. Then the map $\fp X \mapsto T^{\fp X}$ introduced in Corollary~\ref{cor:skorohod} is a homeomorphism between $\FFP$ and a closed subset of $L_0([0,1]^N,\lambda^N;\X)$.
\end{corollary}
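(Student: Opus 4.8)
The plan is to combine the homeomorphism-type statements already packaged in Corollary~\ref{cor:skorohod} with the completeness of $\FFP_p$ (resp.\ $\FFP$) to identify the image of the map $\fp X \mapsto T^{\fp X}$ inside $L_0([0,1]^N,\lambda^N;\X)$. Since the stated corollary in the excerpt refers to $\FFP$ and $L_0$, and we have previously recorded (in the introduction) that via $\fp X\mapsto \underline X$ the space $\FFP$ corresponds to a \emph{closed} subset of $L_0([0,1]^N)$, the content here is: (a) the map is injective, (b) the map and its inverse are continuous for the $L_0$-topology, and (c) the image is closed. Point (b) is exactly Corollary~\ref{cor:skorohod}(i), once one recalls that on $L_0([0,1]^N,\lambda^N;\X)$ the topology of convergence in measure is metrizable and that $\lambda^N$-a.s.\ convergence along a sequence implies convergence in measure, while conversely convergence in measure of a sequence admits an a.s.-convergent subsequence; combined with the fact that $\AW$-convergence and adapted weak convergence coincide topologically (both are metrized by $\AW_0$), this upgrades the a.s.-characterization to a genuine homeomorphism onto its image.

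First I would make injectivity precise: if $T^{\fp X}=T^{\fp Y}$ in $L_0$, then the filtered processes $([0,1]^N,\B,\lambda^N,(\sigma(\pj_{1:t}))_t, T^{\fp X})$ and $([0,1]^N,\B,\lambda^N,(\sigma(\pj_{1:t}))_t, T^{\fp Y})$ coincide as five-tuples, hence (by the $\simhk$-equivalence in Corollary~\ref{cor:skorohod}) $\fp X\simhk \fp Y$, i.e.\ they are equal in $\FFP$. Second, for continuity of the inverse, suppose $T^{\fp X^n}\to T$ in $L_0$ for some $T\in L_0([0,1]^N,\lambda^N;\X)$; I would pass to a subsequence along which convergence is $\lambda^N$-a.s., apply Theorem~\ref{thm:asskorokhodmap} / Corollary~\ref{cor:skorohod}(i) to deduce that the sequence $(\fp X^n)$ is $\AW_0$-Cauchy along that subsequence, invoke completeness of $(\FFP,\AW_0)$ to get a limit $\fp X$, and then use Corollary~\ref{cor:skorohod}(i) once more to identify $T=T^{\fp X}$ $\lambda^N$-a.s. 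A standard subsequence-of-subsequence argument then gives $\fp X^n\to\fp X$ in $\FFP$, so the inverse is continuous; the same argument with $T$ ranging over the image shows simultaneously that the image is \emph{closed} in $L_0$, since any $L_0$-limit of points $T^{\fp X^n}$ is again of the form $T^{\fp X}$.

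Concretely I would structure the proof as follows. Step 1: recall that $L_0([0,1]^N,\lambda^N;\X)$, with the metric $d_{L_0}(f,g)=\E[\,|f-g|_\X\wedge 1\,]$, is a complete separable metric space on which convergence is convergence in measure, and that a sequence converges in measure iff every subsequence has a further subsequence converging $\lambda^N$-a.s. Step 2: injectivity, as above. Step 3: continuity of $\fp X\mapsto T^{\fp X}$: given $\fp X^n\to\fp X$ in $\FFP$ (equivalently $\AW_0(\fp X^n,\fp X)\to 0$), Corollary~\ref{cor:skorohod}(i) gives $T^{\fp X^n}\to T^{\fp X}$ $\lambda^N$-a.s.\ hence in $L_0$. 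Step 4: continuity of the inverse together with closedness of the image, via the completeness argument of the previous paragraph. Step 5: conclude that $\fp X\mapsto T^{\fp X}$ is a homeomorphism onto the closed subset $\{T^{\fp X}:\fp X\in\FFP\}$ of $L_0([0,1]^N,\lambda^N;\X)$.

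The main obstacle I anticipate is purely topological bookkeeping rather than anything deep: the characterizations in Corollary~\ref{cor:skorohod} are stated \emph{for sequences}, while ``homeomorphism'' is a statement about the topologies; reconciling the two requires knowing that both the adapted weak topology on $\FFP$ and the $L_0$-topology on $L_0([0,1]^N,\lambda^N;\X)$ are metrizable (so that sequential continuity equals continuity and sequential closedness equals closedness). The first is Theorem~\ref{thm:isometry} (it is isometric to $(\Z_1,\W_p)$, a metric space), the second is classical. Once metrizability is in hand, everything reduces to the sequential statements already proved, and the only genuinely new ingredient — closedness of the image — falls out of completeness of $\FFP$ exactly as in the a.s.-Skorokhod discussion in the introduction. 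One should also be slightly careful that $\X$ being a separable Banach space is used only to make $\X^N$ (equivalently $\cS=\X$, viewed as the common image space) a separable metric vector space so that $L_0([0,1]^N,\lambda^N;\X)$ is well-defined and $T^{\fp X}=(T^{\fp X}_t)_{t=1}^N$ is naturally an element of it; no linear structure beyond separability and completeness of $\X$ enters the argument.
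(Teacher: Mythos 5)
Your overall plan (metrizability reduces everything to sequences; verify injectivity; forward continuity is Corollary~\ref{cor:skorohod}(i); closedness of the image and continuity of the inverse come from a subsequence argument) is sound, and Steps~1--3 are fine. The gap is in Step~4, at the sentence ``apply Theorem~\ref{thm:asskorokhodmap}~/~Corollary~\ref{cor:skorohod}(i) to deduce that the sequence $(\fp X^n)$ is $\AW_0$-Cauchy along that subsequence.'' Neither result gives you that. Corollary~\ref{cor:skorohod}(i) is a two-way \emph{convergence} statement between a sequence and a limit that is already known to be of the form $T^{\fp X}$; Theorem~\ref{thm:asskorokhodmap} likewise only gives $\mu_n\to\mu$ weakly $\Rightarrow$ $\Phi^{(N)}(\mu_n,\cdot)\to\Phi^{(N)}(\mu,\cdot)$ a.s. Neither is a quantitative comparison of $\AW_0$ with $L_0$, and neither lets you go from ``$T^{\fp X^n}$ is $L_0$-Cauchy'' to ``$(\fp X^n)$ is $\AW_0$-Cauchy'' when the $L_0$-limit is not yet known to be a representative.

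What you are missing is exactly the ingredient the paper uses at this point: Theorem~\ref{thm:AWreform}. Given the $L_0$-limit $T$, set $\fp Y := ([0,1]^N,\B_{[0,1]^N},\lambda^N,\sigma(\pj_{1:t})_{t=1}^N,T)$. Since $([0,1]^N,\dots,T^{\fp X^n})\simhk\fp X^n$ and both $T^{\fp X^n}$ and $T$ live on the common basis $([0,1]^N,\B_{[0,1]^N},\lambda^N,(\sigma(\pj_{1:t}))_t)$, the pair $(T^{\fp X^n},T)$ is a process coupling of $\fp X^n$ and $\fp Y$, so Theorem~\ref{thm:AWreform} (with the truncated metric, $p=0$) gives the one-sided estimate $\AW_0(\fp X^n,\fp Y)\le \E\bigl[d(T^{\fp X^n},T)\wedge 1\bigr]\to 0$. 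Thus $\fp X^n\to\fp Y$ in $\AW_0$ directly, and Corollary~\ref{cor:skorokhod}(i) then forces $T^{\fp X^n}\to T^{\fp Y}$ in $L_0$, whence $T=T^{\fp Y}$ and the image is closed. This is shorter than your route and does not need completeness of $(\FFP,\AW_0)$ at all. If you insist on the Cauchy-then-completeness route, you still need the same inequality from Theorem~\ref{thm:AWreform}, applied to pairs $(T^{\fp X^n},T^{\fp X^m})$, to turn $L_0$-Cauchyness into $\AW_0$-Cauchyness; it does not come from Corollary~\ref{cor:skorohod} alone.
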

\begin{proof}
It suffices to observe that the image of the map $\fp X \mapsto T^{\fp X}$ is closed. To that end, suppose that $(T^{\fp X^n})_n$ converges to some $Y \in L_0([0,1]^N,\lambda^N;\X)$. Then consider the filtered process $\fp{X} := ([0,1]^N, \B_{[0,1]^N}, \lambda^N, \sigma(\pj_{1:t})_{t=1}^N, Y)$. An application of Theorem~\ref{thm:AWreform} (with a truncated distance) yields that 
 $\fp X^n \to \fp X$ in the adapted weak topology. Hence, by Corollary~\ref{cor:skorohod}, we have $T^{\fp X^n} \to T^{\fp X}$ in $L_0$ and hence by uniqueness of the limit $Y=T^{\fp X}$.
\end{proof}

We conclude this section with a discussion of the Skorohod results presented so far. 
\begin{remark}
The arguments given in the proof of Theorem~\ref{thm:lpskorokhod} prove a slightly more general statement: Fix $\fp X \in \FFP_p$ and choose a $\simhk$-representative of $\fp X$ denoted as
\[
(\Omega,\F,\P,(\F_t)_{t=1}^N, X)
\]
that admits independent randomization. Then for every further $\fp Y \in \FFP_p$, there exists a process $\overline{Y}$ on the stochastic basis $(\Omega,\F,\P,(\F_t)_{t=1}^N)$ such that
\[
\fp Y \simhk (\Omega,\F,\P,(\F_t)_{t=1}^N, \overline{Y}) \quad \text{and}  \quad \AW_p^p(\fp X, \fp Y) = \E [d(X,\overline{Y})^p].
\]

However, given two filtered processes $\fp Y^1, \fp Y^2 \in \FFP_p$ and processes $\overline{Y}{}^1,\overline{Y}{}^2$ on $(\Omega,\F,\P,(\F_t)_{t=1}^N)$ such that for $i \in \{1,2\}$
\[
\fp Y^i \simhk (\Omega,\F,\P,(\F_t)_{t=1}^N, \overline{Y}{}^i) \quad \text{and}  \quad \AW_p^p(\fp X, \fp Y) = \E [d(X,\overline{Y}{}^i)^p]
\]
we generally do \emph{not} have $\AW_p^p(\fp Y^1, \fp Y^2) = \E [d(\overline{Y}{}^1,\overline{Y}{}^2)^p]$. This behaviour is not surprising and it amounts to the fact that in optimal transport the gluing of optimal transport plans does in general not yield an optimal transport plan.

In the case of filtered processes with values in the Euclidean  space $\R^d$ and $p=2$, there is a geometric interpretation of these results. For this, we think of $\fp X$ as a reference point in the geodesic space $(\FFP_2,\AW_2)$. Representing the distance from $\fp X$ to any further point $\fp Y \in \FFP_p$ in the Hilbert space $L^2((\Omega,\P); \R^{dN})$ as
\[
\AW_p(\fp X, \fp Y) = \| X- \overline{Y} \|_{ L^2((\Omega,\P); \R^{dN})}
\]
is reminiscent to choosing a geodesic normal coordinate system with base point $\fp X$. The fact that for the three points $\fp X, \fp Y^1, \fp Y^2$ it is not possible to represent all pairwise distances on the Hilbert space $L^2((\Omega,\P); \R^{dN})$ corresponds to the fact that space $(\FFP_2,\AW_2)$ is not flat (except for the special case $d=N=1$). In fact, $(\FFP_2,\AW_2)$ has positive curvature in the sense of Alexandrov. This is because $(\FFP_2,\AW_2)$ is isometric to a nested Wasserstein space (see Theorem~\ref{thm:isometry}) and the fact that the Wasserstein space over a positively curved space is positively curved (see \cite[Theorem~2.20]{AGguide}). 
\end{remark}

The previous remark has shown that in Theorem~\ref{thm:lpskorokhod} the isometric property \eqref{eq:Skorohod_Isometry} between $\fp X$ and $\fp X^n$ and can in general not be extended to all pairwise distances between the processes $\fp X, \fp X_1, \fp X_2, \dots$ at the same time. The next proposition shows that it is also not poosible to enforce the isometric property \eqref{eq:Skorohod_Isometry} and almost sure convergence at the same time. Note that this is a again not a new phenomenon of adapted transport, but already the case for the classical Wasserstein distance:
\begin{proposition} 
For every $p  \in [1, \infty)$ the following assertions are true:
\begin{enumerate}[label=\emph{(\roman*)}]
    \item There exist $\mu, \mu^1, \mu^2,\ldots  \in \mathcal{P}_p(\R^2)$ such that $\W_p(\mu^n,\mu) \to 0$ with the following property: There is no probability space $(\Omega,\F,\P)$ with random variables $X^n \sim \mu^n$ and $X \sim \mu$ such that both $\E_\P[|X^n - X|^p] = \W^p_p(\mu^n, \mu)$ and $X^n \to X $ $\P$-a.s. 
    \item There exist real-valued filtered processes $\fp X, \fp X^1, \fp X^2, \ldots$ in $N=2$ time periods with the following property: There is no stochastic basis $(\Omega,\F,\P, (\F_t)_{t=1}^2)$ with processes $(X_t)_{t=1}^2$ and $(X_t^n)_{t=1}^2$ such that
    \[
    (\Omega,\F,\P, (\F_t)_{t=1}^2,X^n) \simhk \fp X^n, \qquad (\Omega,\F,\P, (\F_t)_{t=1}^2,X) \simhk \fp X,
    \]
    $\E_\P[|X^n - X|^p] = \AW^p_p(\fp X^n, \fp X)$ and $X^n \to X $ $\P$-a.s.
\end{enumerate}
\end{proposition}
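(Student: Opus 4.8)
The plan is to give a single ``typewriter'' example serving both parts, built so that the isometric identity $\E_\P[d(\cdot,\cdot)^p]=\W_p^p$ (resp.\ $=\AW_p^p$) becomes completely rigid because the transport cost is realized by moving a vanishingly small sliver of mass a fixed distance. Equip $\R^2=\R\times\R$ with the metric $d(x,y)^p:=|x_1-y_1|^p+|x_2-y_2|^p$, which is precisely the path-space metric used in (ii) and is bi-Lipschitz equivalent to the Euclidean one. Let $(A_n)_{n\ge 1}$ be the classical typewriter sequence of dyadic subintervals of $[0,1)$: for $n=2^k+j$ with $0\le j<2^k$ set $A_n:=[j2^{-k},(j+1)2^{-k})$, so that $|A_n|=2^{-k}\to 0$ while \emph{every} $u\in[0,1)$ lies in $A_n$ for infinitely many $n$. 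With $\lambda$ Lebesgue measure on $[0,1]$, put $\mu:=(u\mapsto(u,0))_\ast\lambda$ and $\mu^n:=(u\mapsto(u,\mathbf 1_{u\in A_n}))_\ast\lambda$; these are compactly supported, hence lie in $\mathcal P_p(\R^2)$, and the ``vertical'' coupling (identity in the first coordinate) gives $\W_p^p(\mu^n,\mu)\le|A_n|\to 0$.

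For (i), on any $(\Omega,\F,\P)$ carrying $X\sim\mu$ and $X^n\sim\mu^n$, consider the event $E_n:=\{X^n_2=1\}$, which has probability $\mu^n(\R\times\{1\})=|A_n|$. On $E_n$ we have $X_2=0$, so $d(X,X^n)^p\ge|X^n_2-X_2|^p=1$; hence $\E_\P[d(X,X^n)^p]\ge|A_n|$ for \emph{every} coupling, and combined with the vertical coupling this gives $\W_p^p(\mu^n,\mu)=|A_n|$. Now assume in addition $\E_\P[d(X,X^n)^p]=\W_p^p(\mu^n,\mu)$. Then the chain $|A_n|=\E_\P[d(X,X^n)^p]=\E_\P[d(X,X^n)^p\mathbf 1_{E_n}]+\E_\P[d(X,X^n)^p\mathbf 1_{E_n^c}]\ge|A_n|+0$ must be all equalities, forcing $d(X,X^n)=0$ a.s.\ on $E_n^c$ and $d(X,X^n)^p=1$ a.s.\ on $E_n$; since $X_2=0$ a.s., the latter forces $X^n_1=X_1$ a.s.\ on $E_n$ too, so altogether $X^n_1=X_1$ a.s. As $X^n\sim\mu^n$ satisfies $X^n_2=\mathbf 1_{X^n_1\in A_n}$ a.s., we get $X^n=(X_1,\mathbf 1_{X_1\in A_n})$ while $X=(X_1,0)$ with $X_1\sim\mathrm{Unif}[0,1]$. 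Thus $X^n(\omega)\to X(\omega)$ would force $\mathbf 1_{X_1(\omega)\in A_n}\to 0$, i.e.\ $X_1(\omega)\in A_n$ for only finitely many $n$; by the typewriter property this fails for every $\omega$ with $X_1(\omega)\in[0,1)$, i.e.\ $\P$-a.s., contradicting a.s.\ convergence.

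For (ii), I would reinterpret $\R^2=\R\times\R$ as the path space of a real-valued two-step process and take the standard self-aware processes $\fp X:=\mathbb S^\mu$ and $\fp X^n:=\mathbb S^{\mu^n}$ of Remark~\ref{rem:embed} (bounded, hence in $\FP_p$). The vertical coupling is bicausal (the identity at time $1$, and adapted), while the lower bound $\AW_p^p(\fp X^n,\fp X)\ge|A_n|$ follows just as above from the marginals; hence $\AW_p^p(\fp X^n,\fp X)=|A_n|\to 0$. Suppose now $\overline{\fp X}\simhk\fp X$ and $\overline{\fp X}{}^n\simhk\fp X^n$ live on a common stochastic basis $(\Omega,\F,\P,(\F_t)_{t=1}^2)$ with $\E_\P[d(\overline X,\overline X{}^n)^p]=\AW_p^p(\fp X^n,\fp X)$. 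Being self-aware is $\simhk$-invariant, so $\overline{\fp X}$ and $\overline{\fp X}{}^n$ are self-aware, whence $\law(\overline X)=\mu$ and $\law(\overline X{}^n)=\mu^n$ by Corollary~\ref{cor:sa}. The identical squeeze (now with $E_n=\{\overline X{}^n_2=1\}$) forces $\overline X{}^n=(\overline X_1,\mathbf 1_{\overline X_1\in A_n})$ and $\overline X=(\overline X_1,0)$ with $\overline X_1\sim\mathrm{Unif}[0,1]$, so $\overline X{}^n\to\overline X$ a.s.\ fails by the typewriter property exactly as in (i).

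I expect the only genuinely delicate point to be the rigidity step, i.e.\ verifying that matching the (adapted) Wasserstein cost leaves \emph{no} slack in the coupling --- not even the gluing freedom across the different indices $n$, and, in (ii), not even the freedom available inside bicausal couplings. The two-dimensional / two-time-step design with a ``vertical escape route'' is exactly what forces this: the displacement is pinned to $1$ on an event of the precise probability $|A_n|$, which exhausts the entire cost budget and determines $X^n$ as a deterministic function of $X$, with no appeal to strict convexity or uniqueness of optimal maps --- so the argument is uniform in $p\in[1,\infty)$ and identical in the classical and the adapted setting.
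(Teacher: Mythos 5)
Your proof is correct and follows essentially the same strategy as the paper's: choose a sequence of indicator functions $1_{A_n}$ that converges to zero in $L^p$ but not $\lambda$-a.s., push forward by $u\mapsto(u,1_{A_n}(u))$ resp.\ $u\mapsto(u,0)$, show that the second-coordinate cost pins the entire budget so that the optimal coupling is unique (the vertical one), and conclude that any realization matching the optimal cost reproduces the non-a.s.-convergent functions $1_{A_n}(X_1)$; for (ii) one passes to the standard self-aware processes $\mathbb S^{\mu^n}$, $\mathbb S^\mu$, notes the vertical coupling is bicausal, and reruns the argument. The only differences are cosmetic: you use the dyadic typewriter sequence while the paper uses $A_n$ cut from rotating harmonic intervals of length $1/n$, and you extract the rigidity by decomposing the cost on $E_n$ and $E_n^c$ rather than computing $\E[|X_2^n-X_2|^p]$ directly.
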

\begin{proof}
Set $a_n=\sum_{k=1}^{n}\frac{1}{k}$ and  let $A_n$ be the segment between the points $a_{n-1} \mod 1$ and  $a_{n} \mod 1$. It is easy to see that $1_{A_n} \to 0$ in $L^p([0,1],\lambda)$, but not $\lambda$-a.s. We aim to use this property to define $\mu^n,\mu \in \mathcal{P}(\R^2)$ with the desired properties. To that end, set $\mu^n := (\id, 1_{A_n})_\ast \lambda$, where $\lambda$ is the Lebesgue measure on $[0,1]$ and $\id: [0,1] \to [0,1]$ is the identity, and further set $\mu := (\id,0)_\ast \lambda$. 
In order to prove (a),  we show that $\W_p(\mu,\mu^n) = 1/n$ and that the unique optimal coupling is given by $\pi^n := g^n_\ast\lambda$, where $g^n(x)=(x,0,x,1_{A_n}(x))$. It is easy to show that the cost of $\pi^n$ is $1/n$. If $X=(X_1,X_2)$ is a random variable  with $\law(X)=\mu$ and $X^n=(X^n_1,X^n_2)$ is a random variable with $\law(X^n)=\mu^n$ such that the coupling $(X,X^n)$ has cost at most $1/n$, we have 
\begin{align*}
n^{-p} \ge \E[ |X^n - X|^p ] = \E[ |X_1^n - X_1|^p ] + \E[ |X_2^n - X_2|^p ] =  \E[ |X_1^n - X_1|^p ] + n^{-p}.    
\end{align*}
Therefore, $X_1^n = X_1$ a.s.\ and hence $X_2^n=1_{A_n}(X^n_1)=1_{A_n}(X_1)$ a.s. This shows that $\law(X,X^n)=\pi$, hence $\pi^n$ is the unique optimal coupling.

Next, we observe that whenever $X,X^1,X^2,\dots$ are random variables on a probability space $(\Omega, \F, \P)$ such that $\law(X,X^n)=\pi^n$ for every $n \in \N$, then $X^n$ does not converge to $X$ $\P$-a.s. Indeed, by the definition of $\pi^n$, we have that $X_2^n=1_{A_n}(X_1)$ a.s. Since $(1_{A_n})_n$ does not converge $\lambda$-a.s. and $\law(X_1)=\lambda$, this yields that $(X_2^n)_n$ does not converge $\P$-a.s. This proves (i).

In order to show (ii), we consider the self-aware filtered processes $\fp X^n := \mathbb{S}^{\mu^n}$, $\fp X := \mathbb{S}^\mu$. As the coupling $\pi^n$ defined above is bicausal, Remark~\ref{rem:embed} implies that 
\[
\AW(\fp X^n , \fp X ) = \AW(\mu^n, \mu) = \W(\mu^n, \mu) = 1/n. 
\]
Let $(\Omega,\F,\P, (\F_t)_{t=1}^2)$ be a stochastic basis and let  $ (X_t^n)_{t=1}^2,  (X_t)_{t=1}^2$ be processes on this basis that satisfy
    \[
    (\Omega,\F,\P, (\F_t)_{t=1}^2,X^n) \simhk \fp X^n, \,\, (\Omega,\F,\P, (\F_t)_{t=1}^2,X) \simhk \fp X, \,\, \text{and } \E_\P[|X^n - X|^p] = \AW^p_p(\fp X^n, \fp X).
    \]
Then, by the same reasoning as above, we have $\law(X,X^n)=\pi^n$ and hence $X^n$ does not converge to $X$ $\P$-a.s.  
\end{proof}

\appendix
\section{}\label{sec:app}
The purpose of this appendix is to present a counterexample to the amalgamation theorem  \cite[Theorem~3.2]{Ho92}. To that end, we provide a counterexample to \cite[Lemma 3.1]{Ho92}, which is not only used in the proof of \cite[Theorem~3.2]{Ho92}, but also a special case of it, namely the case of one discrete time step. In order to state \cite[Lemma 3.1]{Ho92}, we need the notion of measure-algebra isomorphisms.
\begin{definition}[$\sigma$-algebra isomorphisms]
  Let $(\Omega, \F, \P)$ and $(\widetilde \Omega, \G, \Q)$ be probability spaces with nullsets $\mathcal{N}_\P$ and $\mathcal{N}_\Q$ respectively. The map $h:\F/\mathcal{N}_\P\rightarrow \G/\mathcal{N}_\Q$ is a measure-algebra isomorphism if for all $F_1,F_2\in\F$ with $G_i/\mathcal{N}_\Q=h(F_i/\mathcal{N}_\P)$ the following conditions hold:
  \begin{enumerate}
      \item $\P(F_1)=\Q(G_1)$ 
      \item If $F_1\subseteq F_2$ then $G_1\subseteq G_2$ up to $\Q$-nullsets
  \end{enumerate}
\end{definition}

\begin{lemma}[{\cite[Lemma 3.1]{Ho92}}]
Let $(\Omega^j, \G^j, \P^j)$, $j\in J$, be probability spaces, and let $\F^j\subseteq \G^j$, $j\in J$, be sub-$\sigma$-fields, with measure-algebra isomorphisms $h^j:(\F, \P)\rightarrow(\F^j, \P^j)$, where $(\F,\P) = (\F^{j_0}, \P^{j_0})$ for some $j_0\in J$, and $h^{j_0}$ is the identity. Then there is a probability measure $\Q$ on $(\prod_{j \in J} \Omega^j, \bigotimes_{j \in J} \G^j)$ such that for every $G^j \in \G^j$, $j \in J$ 
\begin{align}
\label{eq:prodform}
\Q\Big( \prod_{j \in J} G^j \Big)=\int\prod_{j \in J} (h^j)^{-1} (\P^j(G^j|\F^j))\emph{d}\P.
\end{align}
If $\pi^j$, $j\in J$ are the coordinate projections $\omega\mapsto\omega_j$, then $(\pi^j)^{-1}(\F^j)$, $j\in J$ are identical modulo nullsets, and $(\pi^j)^{-1}(\G^j)$, $j\in J$ are conditional independent given $(\pi^j)^{-1}(\F^j)$.
\end{lemma}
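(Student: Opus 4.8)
The plan is to establish the claimed product formula \eqref{eq:prodform} by a direct construction of the measure $\Q$, and then to read off the two structural consequences (coincidence of the pulled-back sub-$\sigma$-fields and conditional independence) from the formula itself. Throughout I would work under the tacit assumption that the spaces are standard Borel (or at least that the sub-$\sigma$-fields are countably generated), so that regular conditional probabilities $\P^j(\cdot\mid\F^j)$ exist and the measure-algebra isomorphisms $h^j$ can be realised, at the level of regular conditional kernels, via the common index space. Concretely, one should fix a version of the regular conditional probability $\kappa^j(\omega,\cdot) := \P^j(\cdot\mid\F^j)(\omega)$ on $\Omega^j$, transport it along $h^j$ to obtain a kernel from $(\Omega,\F,\P)$ into $(\Omega^j,\G^j)$ — this uses exactly that $h^{j_0}=\id$ and that the $h^j$ respect measure and inclusion — and then declare
\[
\Q := \int_{\Omega} \Bigl( \bigotimes_{j\in J} (h^j)^{-1}_\ast\kappa^j(\omega,\cdot) \Bigr)\, \P(d\omega).
\]

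The first step is to make sense of this integral: for fixed $\omega$ the inner object is a product probability measure on $\bigl(\prod_j \Omega^j,\bigotimes_j\G^j\bigr)$, and one checks $\omega\mapsto \bigotimes_j(h^j)^{-1}_\ast\kappa^j(\omega,\cdot)(B)$ is $\F$-measurable for $B$ in the generating algebra of measurable rectangles, hence for all $B$ by a monotone class / Dynkin argument; the mixture $\Q$ is then a well-defined probability measure. Evaluating $\Q$ on a rectangle $\prod_j G^j$ immediately gives \eqref{eq:prodform}, since on such a set the product measure factorises and $(h^j)^{-1}\bigl(\P^j(G^j\mid\F^j)\bigr)$ is precisely the pushforward of $\kappa^j$ under $h^j$ evaluated at $G^j$. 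The second step is the claim that the $(\pi^j)^{-1}(\F^j)$ agree modulo $\Q$-nullsets: for $F^j\in\F^j$ with $h^j(F/\mathcal N_\P)=F^j/\mathcal N_{\P^j}$ one computes, using the formula with $G^j=F^j$ and $G^k=\Omega^k$ for $k\neq j$, that $\Q\bigl((\pi^j)^{-1}(F^j)\bigr)=\P(F)$, and more importantly that $(\pi^j)^{-1}(F^j)$ and $(\pi^{j_0})^{-1}(F)$ have the same $\Q$-measure and, testing against further rectangles, the same conditional behaviour, so they coincide up to $\Q$-nullsets. The third step is conditional independence of the $(\pi^j)^{-1}(\G^j)$ given the common $\sigma$-field $\mathcal G_\ast:=(\pi^{j_0})^{-1}(\F)$: this is visible from the construction because conditionally on $\mathcal G_\ast$ (which, after the identification in step two, determines the point $\omega$ up to $\P$-null ambiguity), $\Q$ restricts to the \emph{product} of the kernels $(h^j)^{-1}_\ast\kappa^j(\omega,\cdot)$ — one verifies this by checking that for bounded $\G^j$-measurable $f_j$ one has $\E_\Q\bigl[\prod_j f_j\circ\pi^j \,\big|\, \mathcal G_\ast\bigr]=\prod_j \E_\Q\bigl[f_j\circ\pi^j\,\big|\,\mathcal G_\ast\bigr]$, which reduces to the factorisation of the inner product measure.

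The main obstacle is the bookkeeping around the measure-algebra isomorphisms: the $h^j$ are only defined on equivalence classes modulo nullsets, yet the construction needs honest kernels $(h^j)^{-1}_\ast\kappa^j$, so one must argue that the choice of representatives does not matter $\P$-a.s. and that the resulting map $\omega\mapsto (h^j)^{-1}_\ast\kappa^j(\omega,\cdot)$ is genuinely measurable. In the countably generated / standard Borel setting this is handled by choosing a countable generating field for $\F$, transporting it via $h^j$, and using that a measure-algebra isomorphism between countably generated probability algebras is, up to nullsets, induced by a pointwise map (a Borel isomorphism off a null set); once such pointwise representatives are fixed the kernels and their measurability are routine. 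A secondary subtlety is the case $J$ infinite, where one should be slightly careful that the infinite product of probability measures in the integrand is well defined and that measurability of $\omega \mapsto \bigotimes_j(h^j)^{-1}_\ast\kappa^j(\omega,\cdot)$ on cylinder sets suffices, which it does by Kolmogorov extension applied fibrewise. With these technical points dispatched, steps one through three are each short verifications on rectangles followed by a monotone class closure argument.
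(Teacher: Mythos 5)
The statement you set out to prove is actually \emph{false} in the generality in which it is stated, and the paper's Appendix does not prove it but \emph{disproves} it: Example~\ref{ex:amalgamation} constructs probability spaces $(\Omega^j,\G^j,\P^j)$ built from a Bernstein set $B\subset[0,1]$ and its complement, with trace $\sigma$-fields and a measure-algebra isomorphism $h^2(A\cap B)=A\cap B^c$, for which no measure $\Q$ satisfying \eqref{eq:prodform} can exist — any such $\Q$ would have to concentrate on the (empty) diagonal of $B\times B^c$. Your proposal correctly identifies the crux, namely that one must realise the abstract measure-algebra isomorphisms $h^j$ by honest pointwise maps so that the transported kernels $(h^j)^{-1}_\ast\kappa^j(\omega,\cdot)$ make sense and are measurable in $\omega$; but you then dispose of this obstacle by ``tacitly'' assuming standard Borel spaces or countably generated $\sigma$-fields, which is not part of the lemma's hypotheses. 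That assumption is exactly what fails in the counterexample: a measure-algebra isomorphism between the trace algebras on a Bernstein set and its complement cannot be induced by a (measurable, a.e.\ defined) point map, so your kernel construction never gets off the ground.

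Under the added hypothesis that the spaces are standard Borel (equivalently, that each $h^j$ is induced by a measure-preserving point isomorphism modulo null sets) your three-step argument is essentially correct and routine. But that is a strictly weaker statement than the one Hoover asserts and then uses to prove the amalgamation theorem \cite[Theorem~3.2]{Ho92}. The entire purpose of the paper's appendix is to show that this lemma, and hence the amalgamation theorem built on it, is flawed; this is why the authors prove their transfer principle (Theorem~\ref{thm:transfer}) by an independent route via Proposition~\ref{prop:biadaptedtm} rather than by amalgamation. So the gap here is not in any one step of your write-up but in the truth of the target itself: the obstacle you flagged as ``bookkeeping'' is in fact an essential obstruction, and recognising that it cannot be circumvented without extra hypotheses is the actual content of this part of the paper.
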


There is a counterexample to this lemma, even for $J=\{1,2\}$.
\begin{example}
\label{ex:amalgamation}
Let $J= \{1,2\}$ and $j_0=1$. Further, let $\Omega^1=B$ and $\Omega^2=B^c$, where $B \subset [0,1]$ is a Bernstein set\footnote{A set $B \subset [0,1]$ is called Bernstein set, if $B \cap C \neq \emptyset$ and $B^c \cap C \neq \emptyset$ for every uncountable closed set $C \subset [0,1]$. Bernstein sets exist, see e.g.\ \cite[Theorem 5.3]{Ox80}. Clearly, $B$ is Bernstein if and only if $B^c$ is Bernstein. Every measurable subset of a Bernstein set is a nullset, see e.g.\ \cite[Theorem 5.4]{Ox80}.}. %
Let $\F^1 = \G^1 =\{A\cap B: F\in\B([0,1])\}$ and $\F^2 = \G^2 =\{A\cap B^c: A\in\B([0,1])\}$ be the respective trace-$\sigma$-algebras of the Borel-$\sigma$-algebra on $[0,1].$ Further, define probability measures as $\P^1(A \cap B) = \lambda(A)$, $A \in \B([0,1])$ and $\P^2(A \cap B^c) = \lambda(A)$,  $A \in \B([0,1])$. Further set $(\Omega,\G,\P) = (\Omega^1,\G^1,\P^1)$ and $\F = \F^1$.

First, we show that $\P^1$ is well-defined (the case of $\P^2$ is analog). Indeed, if $A \cap B = A' \cap B$ for $A, A' \in \B([0,1])$, then we have $A \Delta A' \subset B^c$. As every measurable subset of a Bernstein set is a nullset, we find $\lambda( A \Delta A' ) = 0$, hence $\lambda(A) = \lambda(A')$. Moreover, it is easy to check that $\P^1$ is $\sigma$-additive and satisfies $\P^1(\emptyset)=0, \P^1(B)=1$.

Let $h^1$ be the identity and let $h^2:(\F^1,\P^1)\rightarrow(\F^2,\P^2) $ be defined by $h^2(A\cap B)= A\cap B^c$. By the same reasoning as above, it is easy to see that $h_2$ is indeed a measure algebra isomorphism.

Assume now that $\Q$ is a probability measure on $(\Omega^1 \times \Omega^2, \G^1 \otimes \G^2)$ that satisfies \eqref{eq:prodform}. Let $A^1, A^2 \in \B([0,1])$ and consider the sets $G_1 = A^1 \cap B \in \F^1$ and $G_2 = A^2 \cap B^c \in \F^2$. As $G^j$ is $\F^j$-measurable for $j=1,2$ and $(h^2)^{-1}(G^2)=A^2 \cap B$, we  have
\begin{align}\label{eq:ex:1}
  \begin{split}
    \Q(G^1 \times G^2) &= \int\! \P^1(G^1|\F^1) \!\cdot\!(h^2)^{-1}(\P^2(G^2|\F^2)) d\P \\
    &= \int 1_{G^1} 1_{ (h^2)^{-1} ( G^2)}  d\P 
    = \P( A^1 \cap A^2 \cap B ) = \lambda(A^1 \cap A^2). 
  \end{split}
\end{align}
We aim to use this property of $\Q$ to derive a contradiction. To this end, for $k \in \N$ and $i \in \{0, \dots k-1\}$, consider the set
\[
G_{i,k} :=G^1_{i,k}\times G^2_{i,k} =\left(\left[\frac{i}{k},\frac{i+1}{k}\right)\cap B\right)\times \left(\left[\frac{i}{k},\frac{i+1}{k}\right)\cap B^c\right).
\]
Further set $H_k = \bigcup_{i=0}^{k-1}G_{i,k}$. Using \eqref{eq:ex:1}, we find for every $k \in \N$
\begin{align*}
\Q(H_k)=\sum_{i=0}^{k-1}\Q(G_{i,k})= \sum_{i=0}^{k-1} \lambda \left( \left[\frac{i}{k},\frac{i+1}{k}\right) \right) = 1. 
\end{align*}
Therefore, we have 
\begin{align}\label{eq:ex:2}
    \Q\left(\bigcap_{k \in \N} H_k\right) =1.
\end{align}
On the other hand, it is easy to see that 
\[
H_k \subset \{ (x,y) \in  B \times B^c : |x-y|<1/k \}.
\]
Hence, $\bigcap_{k \in \N} H_k = \emptyset$, which is a contradiction to \eqref{eq:ex:2}.
\end{example}

\bibliographystyle{abbrv} 
\bibliography{joint_biblio, biblio_new}

\begin{thebibliography}{10}

\bibitem{AcBaZa20}
B.~Acciaio, J.~Backhoff-Veraguas, and A.~Zalashko.
\newblock Causal optimal transport and its links to enlargement of filtrations and continuous-time stochastic optimization.
\newblock {\em Stoch.~Proc.~Appl.}, 130(5):2918--2953, 2020.

\bibitem{AcBePa20}
B.~Acciaio, M.~Beiglb\"{o}ck, and G.~Pammer.
\newblock Weak transport for non-convex costs and model-independence in a fixed-income market.
\newblock {\em Math. Finance}, 31(4):1423--1453, 2021.

\bibitem{AcHo22}
B.~Acciaio and S.~Hou.
\newblock Convergence of adapted empirical measures on $\mathbb{R}^{d}$.
\newblock {\em arXiv preprint arXiv:2211.10162}, 2023.

\bibitem{AcKrPa23}
B.~Acciaio, A.~Kratsios, and G.~Pammer.
\newblock Designing universal causal deep learning models: The geometric (hyper) transformer.
\newblock {\em Mathematical Finance}, 2023.

\bibitem{XuLiMuAc20}
B.~Acciaio, M.~Munn, L.~Wenliang, and T.~Xu.
\newblock Cot-gan: Generating sequential data via causal optimal transport.
\newblock In {\em Advances in Neural Information Processing Systems}, volume~33, pages 8798--8809, 2020.

\bibitem{AkGaKi23}
S.~Akbari, L.~Ganassali, and N.~Kiyavash.
\newblock Learning causal graphs via monotone triangular transport maps.
\newblock {\em arXiv:2305.18210}, 2023.

\bibitem{Al81}
D.~J. Aldous.
\newblock Weak convergence and general theory of processes.
\newblock Unpublished monograph: Department of Statistics, University of California, Berkeley, July 1981.

\bibitem{AGguide}
L.~Ambrosio and N.~Gigli.
\newblock A user's guide of optimal transport.
\newblock {\em preprint}, 2011.

\bibitem{AmGiSa08}
L.~Ambrosio, N.~Gigli, and G.~Savar{\'e}.
\newblock {\em Gradient flows in metric spaces and in the space of probability measures}.
\newblock Lectures in Mathematics ETH Z\"urich. Birkh\"auser Verlag, Basel, second edition, 2008.

\bibitem{BaBaBeEd19a}
J.~Backhoff-Veraguas, D.~Bartl, M.~Beiglb\"{o}ck, and M.~Eder.
\newblock Adapted {W}asserstein distances and stability in mathematical finance.
\newblock {\em Finance Stoch.}, 24(3):601--632, 2020.

\bibitem{BaBaBeEd19b}
J.~Backhoff-Veraguas, D.~Bartl, M.~Beiglb\"{o}ck, and M.~Eder.
\newblock All adapted topologies are equal.
\newblock {\em Probab.~Theory Relat.~Fields}, 178(3-4):1125--1172, 2020.

\bibitem{BaBaBeWi20}
J.~Backhoff-Veraguas, D.~Bartl, M.~Beiglb{\"o}ck, and J.~Wiesel.
\newblock Estimating processes in adapted {W}asserstein distance.
\newblock {\em Ann. Appl. Probab.}, 32(1):529--550, 2022.

\bibitem{BaBeLiZa17}
J.~Backhoff-Veraguas, M.~Beiglb\"{o}ck, Y.~Lin, and A.~Zalashko.
\newblock Causal transport in discrete time and applications.
\newblock {\em SIAM J.~Optim.}, 27(4):2528--2562, 2017.

\bibitem{BaBePa19}
J.~Backhoff-Veraguas, M.~Beiglb{\"o}ck, and G.~Pammer.
\newblock Weak monotone rearrangement on the line.
\newblock {\em {Electronic Communications in Probability}}, 25, 2020.

\bibitem{BaBePa21}
D.~Bartl, M.~Beiglb{\"o}ck, and G.~Pammer.
\newblock The {W}asserstein space of stochastic processes.
\newblock {\em arXiv:2104.14245}, 2023.

\bibitem{BaWi23}
D.~Bartl and J.~Wiesel.
\newblock Sensitivity of multiperiod optimization problems with respect to the adapted wasserstein distance.
\newblock {\em SIAM J.~Financ.~Math.}, 14(2):704--720, 2023.

\bibitem{BaDoDo20}
E.~Bayraktar, L.~Dolinskyi, and Y.~Dolinsky.
\newblock Extended weak convergence and utility maximisation with proportional transaction costs.
\newblock {\em Finance Stoch.}, 24(4):1013--1034, 2020.

\bibitem{BaHa23}
E.~Bayraktar and B.~Han.
\newblock Fitted value iteration methods for bicausal optimal transport.
\newblock {\em arXiv:2306.12658}, 2023.

\bibitem{BeJoMaPa21a}
M.~Beiglb\"{o}ck, B.~Jourdain, W.~Margheriti, and G.~Pammer.
\newblock Approximation of martingale couplings on the line in the adapted weak topology.
\newblock {\em Probab. Theory Related Fields}, 183(1-2):359--413, 2022.

\bibitem{BePaPo21}
M.~{Beiglb\"ock}, G.~{Pammer}, and A.~Posch.
\newblock The {K}nothe--{R}osenblatt distance and its induced topology.
\newblock {\em ArXiv e-prints}, 2023.

\bibitem{BePaSc21c}
M.~Beiglb{\"o}ck, G.~Pammer, and S.~Schrott.
\newblock Denseness of biadapted {M}onge mappings.
\newblock {\em arXiv:2210.15554}, Oct. 2022.

\bibitem{BiTa19}
J.~Bion-Nadal and D.~Talay.
\newblock On a {W}asserstein-type distance between solutions to stochastic differential equations.
\newblock {\em Ann.~Appl.~Probab.}, 29(3):1609--1639, 2019.

\bibitem{Bo07}
V.~I. Bogachev.
\newblock {\em Measure theory}, volume~1.
\newblock Springer Science and Business Media, 2007.

\bibitem{Bo92}
V.~I. Bogachev.
\newblock {\em Measure theory}, volume~2.
\newblock Springer Science and Business Media, 2007.

\bibitem{BoKoMe05}
V.~I. Bogachev, A.~V. Kolesnikov, and K.~V. Medvedev.
\newblock Triangular transformations of measures.
\newblock {\em Mat. Sb.}, 196(3):3--30, 2005.

\bibitem{BoLiOb23}
P.~Bonnier, C.~Liu, and H.~Oberhauser.
\newblock Adapted topologies and higher rank signatures.
\newblock {\em Ann.~Appl.~Probab.}, 33(3):2136--2175, 2023.

\bibitem{Do14}
Y.~Dolinsky.
\newblock Hedging of game options under model uncertainty in discrete time.
\newblock {\em Electron. Commun. Probab.}, 19:no. 19, 11, 2014.

\bibitem{EcPa22}
S.~Eckstein and G.~Pammer.
\newblock Computational methods for adapted optimal transport.
\newblock {\em Ann. Appl. Probab., to appear}, 2023.

\bibitem{Ed19}
M.~Eder.
\newblock Compactness in adapted weak topologies.
\newblock {\em arXiv:1905.00856}, 2019.

\bibitem{Fo22b}
H.~F\"{o}llmer.
\newblock Doob decomposition, {D}irichlet processes, and entropies on {W}iener space.
\newblock In {\em Dirichlet forms and related topics}, volume 394 of {\em Springer Proc. Math. Stat.}, pages 119--141. Springer, Singapore, [2022] \copyright 2022.

\bibitem{Fo22a}
H.~F\"{o}llmer.
\newblock Optimal couplings on {W}iener space and an extension of {T}alagrand's transport inequality.
\newblock In {\em Stochastic analysis, filtering, and stochastic optimization}, pages 147--175. Springer, Cham, [2022] \copyright 2022.

\bibitem{Gi04}
N.~Gigli.
\newblock {\em On the geometry of the space of probability measures in {$\mathbb R^n$} endowed with the quadratic optimal transport distance}.
\newblock PhD thesis, Scuola Normale Superiore di Pisa, 2004.

\bibitem{GlPfPi17}
M.~Glanzer, G.~C. Pflug, and A.~Pichler.
\newblock Incorporating statistical model error into the calculation of acceptability prices of contingent claims.
\newblock {\em Math. Program.}, 174(1-2, Ser. B):499--524, 2019.

\bibitem{He96}
M.~F. Hellwig.
\newblock Sequential decisions under uncertainty and the maximum theorem.
\newblock {\em J. Math. Econom.}, 25(4):443--464, 1996.

\bibitem{HeSc02}
M.~F. Hellwig and K.~M. Schmidt.
\newblock Discrete--time approximations of the holmstr{\"o}m--milgrom brownian--motion model of intertemporal incentive provision.
\newblock {\em Econometrica}, 70(6):2225--2264, 2002.

\bibitem{HiRoYo14}
F.~Hirsch, B.~Roynette, and M.~Yor.
\newblock Kellerer's theorem revisited.
\newblock In Springer, editor, {\em Asymptotic Laws and Methods in Stochastics. Volume in Honour of Miklos Csorgo}, Fields Institute Communications Series, 2014.

\bibitem{Ho91}
D.~Hoover.
\newblock Convergence in distribution and {S}korokhod convergence for the general theory of processes.
\newblock {\em Probab.~Theory Relat.~Fields}, 89(3):239--259, 1991.

\bibitem{Ho92}
D.~N. Hoover.
\newblock Extending probability spaces and adapted distribution.
\newblock In {\em S\'{e}minaire de {P}robabilit\'{e}s, {XXVI}}, volume 1526 of {\em Lecture Notes in Math.}, pages 560--574. Springer, Berlin, 1992.

\bibitem{HoKe84}
D.~N. Hoover and H.~J. Keisler.
\newblock Adapted probability distributions.
\newblock {\em Transactions of the American Mathematical Society}, 286(1):159--201, 1984.

\bibitem{Ka97}
O.~Kallenberg.
\newblock {\em Foundations of modern probability}.
\newblock Probability and its Applications (New York). Springer-Verlag, New York, 1997.

\bibitem{Ke72}
H.~Kellerer.
\newblock Markov-{K}omposition und eine {A}nwendung auf {M}artingale.
\newblock {\em Math. Ann.}, 198:99--122, 1972.

\bibitem{KiPfPi20}
K.~B. Kirui, G.~C. Pflug, and A.~Pichler.
\newblock New algorithms and fast implementations to approximate stochastic processes.
\newblock {\em arXiv:2012.01185}, 2020.

\bibitem{La18}
R.~{Lassalle}.
\newblock {Causal transference plans and their Monge-Kantorovich problems}.
\newblock {\em Stoch.~Anal.~Appl.}, 36(3):452--484, 2018.

\bibitem{Lo08b}
G.~Lowther.
\newblock Fitting martingales to given marginals.
\newblock {\em arXiv:0808.2319 [math]}, Aug. 2008.

\bibitem{NiSu20}
F.~Nielsen and K.~Sun.
\newblock {\em Chain Rule Optimal Transport}, pages 191--217.
\newblock Springer International Publishing, Cham, 2021.

\bibitem{Ox80}
J.~C. Oxtoby.
\newblock {\em Measure and category}, volume~2 of {\em Graduate Texts in Mathematics}.
\newblock Springer-Verlag, New York-Berlin, second edition, 1980.
\newblock A survey of the analogies between topological and measure spaces.

\bibitem{Pa22}
G.~Pammer.
\newblock A note on the adapted weak topology in discrete time.
\newblock {\em Electron. Comm. Probab., to appear}, May 2023.

\bibitem{Pf09}
G.~C. Pflug.
\newblock Version-independence and nested distributions in multistage stochastic optimization.
\newblock {\em SIAM Journal on Optimization}, 20(3):1406--1420, 2009.

\bibitem{PfPi12}
G.~C. Pflug and A.~Pichler.
\newblock A distance for multistage stochastic optimization models.
\newblock {\em SIAM J.~Optim.}, 22(1):1--23, 2012.

\bibitem{PfPi14}
G.~C. Pflug and A.~Pichler.
\newblock {\em Multistage stochastic optimization}.
\newblock Springer Series in Operations Research and Financial Engineering. Springer, Cham, 2014.

\bibitem{PfPi16}
G.~C. Pflug and A.~Pichler.
\newblock From empirical observations to tree models for stochastic optimization: convergence properties.
\newblock {\em SIAM J.~Optim.}, 26(3):1715--1740, 2016.

\bibitem{PiWe21}
A.~Pichler and M.~Weinhardt.
\newblock The nested {S}inkhorn divergence to learn the nested distance.
\newblock {\em Computational Management Science}, pages 1--25, 2021.

\bibitem{Ru85}
L.~R\"{u}schendorf.
\newblock The {W}asserstein distance and approximation theorems.
\newblock {\em Z. Wahrsch. Verw. Gebiete}, 70(1):117--129, 1985.

\bibitem{Ve70}
A.~M. Vershik.
\newblock Decreasing sequences of measurable partitions and their applications.
\newblock {\em Sov. Mat. Dokl.}, 11(4):1007 -- 1011, 1970.

\bibitem{Ve94}
A.~M. Vershik.
\newblock Theory of decreasing sequences of measurable partitions.
\newblock {\em Algebra i Analiz}, 6(4):1--68, 1994.

\bibitem{Vi03}
C.~Villani.
\newblock {\em Topics in optimal transportation}, volume~58 of {\em Graduate Studies in Mathematics}.
\newblock American Mathematical Society, Providence, RI, 2003.

\bibitem{Vi09}
C.~Villani.
\newblock {\em Optimal Transport. Old and New}, volume 338 of {\em Grundlehren der mathematischen Wissenschaften}.
\newblock Springer, 2009.

\bibitem{XuAc21}
T.~Xu and B.~Acciaio.
\newblock Conditional {COT}-{GAN} for video prediction with kernel smoothing.
\newblock In {\em NeurIPS 2022 Workshop on Robustness in Sequence Modeling}, 2022.

\end{thebibliography}



\end{document}